\theoremstyle{plain}
\newtheorem{prop}{Proposition}
\newtheorem{thm}[prop]{Theorem}
\newtheorem{cor}[prop]{Corollary}
\newtheorem{lem}[prop]{Lemma}
\newtheorem{fact}[prop]{Fact}
\newtheorem*{thmA}{Theorem A}
\theoremstyle{definition}
\newtheorem*{defi}{Definition}
\theoremstyle{remark}
\newtheorem{rem}[prop]{Remark}
\newtheorem{example}[prop]{Example}
\numberwithin{prop}{section}
\numberwithin{equation}{section}
\DeclareMathOperator{\Gal}{Gal}
\DeclareMathOperator{\Hom}{Hom}
\DeclareMathOperator{\Aut}{Aut}
\DeclareMathOperator{\Zen}{Z}
\DeclareMathOperator{\image}{im}
\DeclareMathOperator{\kernel}{ker}
\DeclareMathOperator{\bra}{Br}
\DeclareMathOperator{\cohdim}{cd}
\DeclareMathOperator{\rank}{rk}
\DeclareMathOperator{\res}{res}
\DeclareMathOperator{\spa}{Span}
\DeclareMathOperator{\Cent}{C}
\newcommand{\Z}{\mathbb{Z}}
\newcommand{\Q}{\mathbb{Q}}
\newcommand{\F}{\mathbb{F}}
\newcommand{\N}{\mathbb{N}}
\newcommand{\argu}{\hbox to 7truept{\hrulefill}}
\author{S. K. Chebolu}
\author{J. Min\'a\v{c}}
\author{C. Quadrelli}
\title[Detecting Fast solvability of equations]
      {Detecting Fast solvability of equations via small powerful Galois groups}
\date{\today}
\thanks{The first author is partially supported by NSA grant H98230-13-1-0238
	and the second author from NSERC grant RO37OA1OO6}
\address{Department of Mathematics, Illinois State University\\
Campus box 4520, 61790 Normal IL, USA}
\email{schebol@ilstu.edu}
\address{Department of Mathematics, University of Western Ontario\\
Middlesex College, N6A5B7 London ON, Canada}
\email{minac@uwo.ca}
\address{Dipartimento di Matematica, Universit\`a di Milano-Bicocca\\
Ed. U5, Via R.Cozzi 53, 20125 Milano, Italy}
\email{c.quadrelli1@campus.unimib.it}
\begin{document}

\begin{abstract}
Fix an odd prime $p$, and let $F$ be a field containing a primitive $p$th root of unity.
It is known that a $p$-rigid field $F$ is characterized by the property that the Galois group $G_F(p)$
of the maximal $p$-extension $F(p)/F$ is a solvable group.
We give a new characterization of $p$-rigidity which says that a field $F$ is $p$-rigid precisely
when two fundamental canonical quotients of the absolute Galois groups coincide.
This condition is further related to analytic $p$-adic groups and to some Galois modules.
When $F$ is $p$-rigid, we also show that it is possible to solve for the roots of any irreducible polynomials in $F[X]$ whose splitting field over $F$ has a $p$-power degree via non-nested radicals.
We provide new direct proofs for hereditary $p$-rigidity,
together with some characterizations for $G_F(p)$ -- including a complete description for such a group
and for the action of it on $F(p)$ -- in the case $F$ is $p$-rigid.
\end{abstract}
\keywords{Rigid fields, Galois modules, Absolute Galois groups, Bloch-Kato groups, Powerful pro-$p$ groups}

\subjclass[2010]{12F10, 12G10, 20E18}

\dedicatory{ To Professors Tsit-Yuen Lam and Helmut Koch
with admiration and respect.}

\maketitle

\section{Introduction}\label{sec:1intro}
The problem of solving algebraic equations by radicals has a long and rich history
which dates back to the 7th century when the Indian mathematician Brahmagupta
obtained the famous quadratic formula.
After the Italian mathematicians Niccol\`o Tartaglia and Girolamo Cardano
obtained the solution of the cubic equation
in the 16th century, mathematicians naturally wondered
whether it is possible to solve equations of any degree by radicals.
\'Evariste Galois, in his theory of equations, gave an elegant answer in the 19th century.
It is possible to solve an equation by radicals,
provided the Galois group of the underlying equation is a solvable group.
An important consequence is the result on the insolvability of general algebraic equations
of degree 5 and above by radicals.
Since every finite $p$-group is a solvable group, we know that every irreducible polynomial in $F[X]$
whose splitting field over $F$ has $p$-power degree, is solvable by radicals. In this paper,
we will show that if the underlying field $F$ is $p$-rigid (see definition below), then it is possible to do even better:
we can ``fast-solve" for the roots. That is, we can solve for the roots of these irreducible polynomials via non-nested radicals, i.e., elements of the type $\sqrt[n]{a}$
with $a\in F\smallsetminus\{0\}$.
(An element of the form $\sqrt[n]{a+\sqrt[m]{b}}$, with $a,b\in F\smallsetminus\{0\}$ and $n,m>1$
is, for instance, nested.)
This improves the following result: the Galois group of the maximal $p$-extension $F(p)/F$ is a solvable group
if and only if the field $F$ is $p$-rigid (proved first in \cite{englerkoen}).

We make the blanket assumption that $p$ is an odd prime, and
all fields in this paper contain a primitive $p$th root of unity -- unless explicitly stated otherwise.
Let $F$ be such a field and let $F^p$ denote the collection of elements in $F$ that are $p$th powers.
An element $a$ in $ F\smallsetminus F^p$ is said to be $p$-rigid if the image of the norm map $F(\sqrt[p]{a})\rightarrow F$
is contained in $\bigcup_{k=0}^{p-1}a^kF^p$.
We say that $F$ is $p$-{\it rigid} if all of the elements of $F\smallsetminus F^p$ are $p$-rigid.
The notion of $p$-rigidity was introduced by K. Szymiczek in \cite[Ch. III, \S 2]{szy},
and it was developed and thoroughly studied first in the case of $p=2$, and then in the case of $p$ odd.

For $p=2$, the definition of $2$-rigidity depends on the behavior of certain quadratic forms.
The consequences of $2$-rigidity were studied in several papers including \cite{ware1,ware2,jacob, jacobeare, AdGaKaMi, leepsmith}.
Today many results about 2-rigid fields are known, and these fields are relatively well understood.

For $p$ odd, the study of $p$-rigid fields was developed by Ware in \cite{ware},
and later on by others (see \cite{Ktheoryefrat} and \cite{secret} for some highlights on the history of $p$-rigidity).
In \cite{ware}, Ware introduced a different notion of rigidity called {\it hereditary $p$-rigidity}.
A field $F$ is said to be hereditarily $p$-rigid if every subextension of the maximal $p$-extension
$F(p)/F$ is $p$-rigid. As Ware pointed out, to  conclude that $F$ is hereditary $p$-rigid, it is enough to check that each finite extension $K$ of $F$ is $p$-rigid.
Ware also gave a Galois-theoretic description of hereditarily $p$-rigid fields.
In \cite{englerkoen}, A. Engler and J. Koenigsmann showed that
$p$-rigidity implies hereditary $p$-rigidity.

In this paper we  establish some new characterizations and deeper connections for $p$-rigid fields.
 Associated to  a field $F$, we now introduce some important field extensions.  Let $F^{(2)}=F(\sqrt[p]{F})$.
Let $F^{\{3\}}$ denote the compositum of all Galois extensions $K/F^{(2)}$ of degree $p$. Similarly, let  $F^{(3)}$
denote the compositum of all Galois extensions $K/F^{(2)}$ of degree $p$ for which $K/F$ is also Galois. Finally, let $F(p)$ denote the
 compositum of all  Galois extensions $K/F$ that are of degree a power of $p$.
Our main theorem then states:

\begin{thmA}
Let $p$ be an odd prime, and let $F$ be a field containing a primitive $p$-th root of unity.
 $F$ is $p$-rigid if and only if $F^{(3)}=F^{\{3\}}$.
\end{thmA}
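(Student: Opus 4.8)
The plan is to reformulate both sides group-theoretically, reduce the identity $F^{(3)}=F^{\{3\}}$ to a single module-theoretic condition, and only then match that condition with $p$-rigidity. Write $G=G_F(p)$ and let $\Phi(G)=\overline{G^p[G,G]}$ be its Frattini subgroup. I would first record the Galois meaning of the three fields. Since $F^{(2)}=F(\sqrt[p]{F})$ is the maximal elementary abelian $p$-extension of $F$, we have $\Gal(F(p)/F^{(2)})=\Phi(G)$; and since $F^{\{3\}}$ is the maximal elementary abelian $p$-extension of $F^{(2)}$ inside $F(p)$ (its compositum over $F$ is pro-$p$ Galois, hence lies in $F(p)$), we get $\Gal(F(p)/F^{\{3\}})=\Phi(\Phi(G))$. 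Set $\Delta=\Gal(F^{(2)}/F)=G/\Phi(G)$ and $V=\Gal(F^{\{3\}}/F^{(2)})=\Phi(G)/\Phi(\Phi(G))$, an $\F_p[\Delta]$-module under conjugation.

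Next I would locate $F^{(3)}$ inside $F^{\{3\}}$. A degree-$p$ subextension $K/F^{(2)}$ corresponds to a codimension-one subspace $U\le V$, and $K/F$ is Galois precisely when $U$ is $\Delta$-stable; hence $F^{(3)}$ is the fixed field of the intersection of all codimension-one $\Delta$-submodules of $V$. Because $\Delta$ is a $p$-group, every simple $\F_p[\Delta]$-module is trivial, so every maximal submodule of $V$ has codimension one and this intersection is the radical $IV=\sum_{\sigma\in\Delta}(\sigma-1)V$. Therefore
\[ F^{(3)}=F^{\{3\}}\iff IV=0\iff \Delta\ \text{acts trivially on}\ V\iff [G,\Phi(G)]\subseteq\Phi(\Phi(G)), \]
and dually, via the Kummer pairing $V\times(F^{(2)*}/F^{(2)*p})\to\mu_p$ (which is $\Delta$-equivariant with trivial action on $\mu_p\subseteq F$), this is equivalent to $\Delta$ acting trivially on $F^{(2)*}/F^{(2)*p}$. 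This reduction is the clean part of the argument.

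It then remains to match the last condition with $p$-rigidity, and the hinge is a one-step Kummer lemma: for a $p$-rigid field $M\supseteq\mu_p$ and $a\in M^*\smallsetminus M^{*p}$, the group $M(\sqrt[p]{a})^*/M(\sqrt[p]{a})^{*p}$ is generated by the image of $M^*/M^{*p}$ together with the class of $\gamma=\sqrt[p]{a}$. Granting this, the forward direction runs by induction up the tower $F^{(2)}=\bigcup L$ of finite elementary abelian layers $L=F(\sqrt[p]{a_1},\dots,\sqrt[p]{a_n})$: hereditary $p$-rigidity (every subextension of $F(p)/F$ is $p$-rigid) lets the lemma apply at each step, so any $x\in L^*$ satisfies $x\equiv c\,\gamma^{k}$ modulo $L^{*p}$ with $c$ from the previous layer; since each radicand lies in $F$ one has $\sigma(\gamma)/\gamma\in\mu_p$, and because $\mu_p\subseteq F^{(2)*p}$ (as $\sqrt[p]{\zeta_p}\in F^{(2)}$) one propagates $\sigma(x)/x\in\mu_p\,L^{*p}\subseteq F^{(2)*p}$ through the tower, giving triviality of the $\Delta$-action. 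For the converse I would argue contrapositively: a non-rigid $a$ furnishes $b=N_{F(\sqrt[p]{a})/F}(\beta)\notin\langle a\rangle F^{*p}$, and since the norm of any class in $\langle\gamma\rangle\cdot\mathrm{im}(F^*/F^{*p})$ lies in $\langle a\rangle F^{*p}$, the class of $\beta$ is genuinely new and should obstruct triviality of the $\Delta$-action.

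The easy half of the hinge is its converse—if a new class exists, the norm computation above exhibits a non-rigid element—and the elementary identity $F^*\cap F(\sqrt[p]{a})^{*p}=\langle a\rangle F^{*p}$ (immediate from inflation--restriction for $\Gal(F(\sqrt[p]{a})/F)\cong\Z/p$) is the bookkeeping device. \textbf{The main obstacle I expect is the substantive half}: proving that $p$-rigidity forbids new Kummer classes in a single degree-$p$ step, and, for the converse, showing that the new class produced by a non-rigid element \emph{survives} to $F^{(2)}$ rather than being absorbed into $F^{(2)*p}$. This last point is genuinely delicate, since the restriction $F^*/F^{*p}\to F^{(2)*}/F^{(2)*p}$ is itself the zero map, so the witness cannot be used naively and must be tracked inside a carefully chosen finite layer, preventing a twist from being killed by $p$-th powers entering from above. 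This survival/absorption control, which is essentially the content of Ware's Galois-theoretic description of hereditarily $p$-rigid fields, is where I would concentrate the effort.
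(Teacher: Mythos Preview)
Your module-theoretic reformulation is correct and is exactly what the paper records: $F^{(3)}=F^{\{3\}}$ if and only if $J^G=J$, equivalently $[G,\Phi(G)]\subseteq\Phi(\Phi(G))$ (this is their Remark~4.2(ii) and the computation preceding it). For the forward implication your approach is slightly leaner than the paper's. The paper proves it by first establishing the full structure theorem for $G_F(p)$ (their Corollary~B: $G$ is $\theta$-abelian with an explicit presentation), then computing $\lambda_n(G)=G^{p^{n-1}}$ directly. Your route---iterate the one-step Kummer lemma up a finite elementary abelian tower, using hereditary rigidity---avoids the structure theorem but rests on the same two ingredients: your ``one-step Kummer lemma'' is precisely the paper's Lemma~3.7 (proved via Min\'a\v{c}--Swallow's module classification), and hereditary rigidity is their Theorem~3.10. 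Either way the forward half is fine.

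The converse is where you have a genuine gap, and your instinct that the witness you wrote down may not survive to $F^{(2)}$ is correct: working with $[\beta]$ alone and tracking $\sigma(\beta)/\beta$ does not give enough control. The paper's missing idea is an explicit Hilbert~90 construction in a \emph{bicyclic} extension. Given independent classes $[a]_F,[b]_F$ with $a=N_{F(\sqrt[p]{b})/F}(\delta)$, set $E=F(\sqrt[p]{a},\sqrt[p]{b})$ and $c=\sqrt[p]{a}$. One computes $N_{E/F(\sqrt[p]{a})}(\delta/c)=a/c^p=1$, so Hilbert~90 produces $\gamma\in\dot E$ with $(\tau-1)\gamma=\delta/c$. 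The survival step is then reduced to showing $\delta/c\notin (\dot F^{(2)})^p$, and this is handled by two short lemmas: (i) for $\alpha\in\dot E$, $\alpha\in(\dot F^{(2)})^p$ iff $\alpha\in\dot F\cdot\dot E^p$ (a Kummer comparison, their Lemma~4.4), and (ii) $\sqrt[p]{a}\notin\dot F\cdot\dot E^p$ because $N_{E/F(\sqrt[p]{a})}(\sqrt[p]{a})=a\notin\dot F^p$ while $N_{E/F(\sqrt[p]{a})}(\dot F\cdot\dot E^p)\subseteq\dot F^p$ (their Lemma~4.5). Since $\delta\in F(\sqrt[p]{b})\subseteq\dot F\cdot\dot E^p$ would then force $c\in\dot F\cdot\dot E^p$, one gets $\delta/c\notin\dot F\cdot\dot E^p$, hence $\delta/c\notin(\dot F^{(2)})^p$, so $[\gamma]\in J$ is not $G$-fixed. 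This norm argument applied to $c=\sqrt[p]{a}$ inside the bicyclic extension, rather than to your $\beta$ inside the cyclic one, is the trick that makes the survival check clean.
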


It is worth pointing out that the fields $F^{(3)}$ and $F^{\{3\}}$ play an important role in studying the arithmetic and Galois cohomology of fields.
For instance, in \cite{MinacSpira} it is shown that $\Gal(F^{(3)}/F)$ in the case when $p=2$ determines
essentially the Witt ring of quadratic forms. More recently, in  \cite{cem} it is shown that  $\Gal(F^{(3)}/F)$
determines the  Galois cohomology of $G_F(p) :=\Gal(F(p)/F)$ with $\mathbb{F}_p$ coefficients!
In \cite{idojan3},  an even smaller Galois group over $F$; namely $\Gal(F^{\{3\}}/F)$, was shown to have  this property.
Therefore, Theorem~A answers the natural question: ``For which fields $F$, does one have $F^{\{3\}}=F^{(3)}$?'',
a question which has its own importance beyond the connection to $p$-rigid fields.

Furthermore, we provide a Galois-theoretic characterization for $p$-rigid fields, together with
an explicit description of the maximal $p$-extension $F(p)$ of a $p$-rigid field $F$,
and of its maximal pro-$p$ Galois group $G_F(p)$.
In particular, we prove the equivalence of the following three statements:
$G_F(p)$ is solvable; $F$ is $p$-rigid; and $F$ is hereditarily $p$-rigid
(see Theorem~\ref{thm:rigidheredrigid} and Corollary~\ref{corF}).
Although this result is proved in \cite[Prop.~2.2]{englerkoen},
this earlier proof is less direct than our approach. In particular, it relies on a number
of results on Henselian valuations, covered in several papers, and on
some results of \cite{ware}. On the other hand, we refer to \cite{ware} only for some definitions, and we develop and prove our results
independently of both \cite{ware} and \cite{englerkoen}.
In fact our approach is substantially different from the approach of Engler and Koenigsmann,
as our proofs use only elementary methods from Galois theory and the theory of cyclic algebras -- in the spirit of Ware's paper.

Using relatively simple argument but powerful Serre's theorem on cohomological dimensions of open subgroups of pro-$p$ groups and corollary of  Rost-Voevodsky's proof of the Bloch-Kato conjecture we are able to prove ``going down $p$-rigidity theorem" in the case when $G_F(p)$ is finitely generated; see Theorem  \ref{goingdown}. Then, using this result and the well-known Lazard's group theoretic characterization of $p$-adic
analytic pro-$p$ groups we are able to show that if $G_F(p)$ is finitely generated then $F$ is $p$-rigid if and only if $G_F(p)$ is a $p$-adic
analytic pro-$p$ group.

We also investigate how $p$-rigid fields are related to certain powerful pro-$p$ groups,
studied by the third author in \cite{claudio}, and with certain Galois modules, studied by J. Swallow and the second
author in \cite{janswallow}.

Maximal pro-$p$ Galois groups play a fundamental role in the study of absolute Galois groups of fields.
Moreover, the cases where $F$ is a $p$-rigid field and where $G_F(p)$ is a free pro-$p$ group,
or a Demu\v{s}kin group, are cornerstones in the study of maximal pro-$p$ Galois groups.
Therefore, it is important to have a clear, complete and explicit description of the former case.
In fact we will be able to recover the entire group $G_F(p)$ from rather small Galois groups and the structure
of the $p$th roots of unity contained in $F$.

This paper is organized as follows.
In Section~\ref{sec:2prel} we review some preliminary definitions and basic facts
about pro-$p$ groups and their cohomology.
In Section~\ref{sec:3rigfiel} we state and prove results on the cohomology
and the group structure of the Galois group $G_F(p)$,
which will be used in proving that $p$-rigidity implies hereditary $p$-rigidity,
and in characterizing $G_F(p)$ for $p$-rigid fields.
Finally in Section~\ref{sec:4proof} we prove Theorem~A and we study the connections with the
fast-solvability of equations and other group-theoretic consequences of $p$-rigidity.

\section{Preliminaries}\label{sec:2prel}

\subsection{Pro-$p$ groups}\label{subsec:2.1pgps}
Henceforth we will work in the category of pro-$p$ groups and assume that all our subgroups of pro-$p$ groups will be closed.
Let $G$ be a pro-$p$-group. For $\sigma,\tau$ in $G$, $^\sigma{\tau}:=\sigma\tau\sigma^{-1}$,
and $[\sigma,\tau]:= {^\sigma{\tau}}\cdot\tau^{-1}$ is the commutator of $\sigma$ and $\tau$.
 The closed subgroup of $G$ generated by all of the commutators, will be denoted by $[G,G]$.
Note that \cite{analytic} has a slighlt different convention for commutators. They define as a commutator $[\sigma,\tau]$ what in our notation is $ [\sigma^{-1},\tau^{-1}]$. However, this will not effect the citation and use of results in \cite{analytic}.

For a profinite group $G$, the Frattini subgroup $\Phi(G)$ of $G$ is defined to be
the intersection of all maximal normal subgroups of $G$.
If $G$ is a pro-$p$ group, it can be shown that
\[\Phi(G):=G^p[G,G]\]
\cite[Prop.~1.13]{analytic}, where $G^p$ is the subgroup generated by the $p$-powers of the elements of $G$.
Hence $G/\Phi(G)$ is a $p$-elementary abelian group of possibly infinite rank.

We define the subgroups $\gamma_i(G)$ and $\lambda_i(G)$ of $G$ to be the elements of the lower descending central series,
resp. of the lower $p$-descending central series, of the pro-$p$ group $G$. That is, $\gamma_1(G)=\lambda_1(G)=G$, and
 \[\gamma_{i+1}(G) :=[\gamma_i(G),G],\quad \lambda_{i+1}(G) :=\lambda_i(G)^p[\lambda_i(G),G] ,\]
for $i \geq 1$.
In this terminology, note that the Frattini subgroup $\Phi(G)$ is exactly $\lambda_2(G)$.
If $G$ is finitely generated, the subgroups $\lambda_i(G)$ make up a system of open neighborhoods of 1 in $G$.

Finally,  we denote by $d(G)$ the minimal number of generators of $G$. It follows that $d(G)=\dim(G/\Phi(G))$ as an $\F_p$-vector space.
If $d=d(G)$, we say that $G$ is $d$-generated. If $G$ is a finitely generated, then
the rank $\rank(G)$ of the group $G$ is
\[\rank(G)=\sup_{C\leq G}\{d(C)\}=\sup_{C\leq G}\{d(C)|C\text{ is open}\}\]
(see \cite[\S 3.2]{analytic}).

\subsection{Maximal pro-$p$ Galois groups and their cohomology}\label{subsec:2.2cohom}
Consider $\F_p$ as trivial $G$-module.
The cohomology groups $H^k(G,\F_p)$ of $G$ with coefficients in $\F_p$ are defined for
all $k\geq0$.
In particular,
\begin{equation}\label{lowercohomology}
 H^0(G,\F_p)=\F_p\quad\text{and}\quad H^1(G,\F_p)=\Hom(G,\F_p).
\end{equation}
By Pontryagin duality it follows that
\begin{equation}\label{duality}
 H^1(G,\F_p)=G^\vee=\left(G/\Phi(G)\right)^\vee\quad\text{and}\quad d(G)=\dim_{\F_p}\left(H^1(G,\F_p)\right),
\end{equation}
where the symbol $\_^\vee$ denotes the Pontryagin dual (see \cite[Ch. III \S9]{nsw}).
The {\it cohomological dimension} $\cohdim(G)$ of a (pro-)$p$ group $G$ is the least positive integer
$k$ such that $H^{k+1}(G,\F_p)=0$, and if such $k$ does not exist, one sets $\cohdim(G)=\infty$.

The direct sum $H^\bullet(G,\F_p)=\bigoplus_{k\geq0}H^k(G,\F_p)$, is equipped with the {\it cup product}
\begin{displaymath}
 \xymatrix{H^r(G,\F_p)\times H^s(G,\F_p)\ar[r]^-{\cup} & H^{r+s}(G,\F_p),}
\end{displaymath}
which gives it a structure of a graded commutative $\F_p$-algebra.
For further facts on the cohomology of profinite groups we refer the reader to \cite{nsw}.

We say that a pro-$p$ group $G$ is a {\it Bloch-Kato} pro-$p$ group
if for every closed subgroups $C$ of $G$ the $\F_p$-cohomology algebra $H^\bullet(C,\F_p)$
is {\it quadratic}, i.e., it is generated by $H^1(C,\F_p)$ and the relations are generated
as ideal by elements in $H^2(C,\F_p)$ (see \cite{claudio}).

Given a field $F$, let $\bar{F}^s$ denote the separable closure of $F$,
and let $F(p)$ be the maximal $p$-extension of $F$, i.e., $F(p)$ is the compositum of all finite Galois extensions
$K/F$ of $p$-power degree.
Then $G_F:=\Gal(\bar{F}^s/F)$ is the {\it absolute Galois group} of $F$, and
the {\it maximal pro-$p$ Galois group} $G_F(p)$ of $F$ is the maximal pro-$p$ quotient of $G_F$ or, equivalently,
$G_F(p)$ is the Galois group of the maximal $p$-extension $F(p)/F$.
We then have the Galois correspondence, according to which the closed subgroups of $G_F(p)$ correspond to subextensions of $F(p)/F$ and conversely.

By the proof of the Bloch-Kato conjecture, obtained by M. Rost and V, Voevodsky
(with C. Weibel's patch), one knows that the maximal pro-$p$ Galois group of a
field containing the $p$th roots of unity is a Bloch-Kato pro-$p$ group \cite{weibel, weibel2, voev}.

Another important feature of maximal pro-$p$ Galois groups is the following:
if $p$ is odd then $G_F(p)$ is torsion-free. This Artin-Schreier type result is due to E. Becker (see \cite{beckerAS}).

The study of maximal pro-$p$ Galois groups is extremely important, since they are easier to handle
than absolute Galois groups; yet they provide substantial information about absolute Galois groups
and the structure of their base fields.

\subsection{Important subextensions}\label{subsec:2.3F3}
Throughout this paper, fields are assumed to contain the $p$th roots of unity.

Given a field $F$, let $F^{(2)}=F(\sqrt[p]{F})$, i.e., $F^{(2)}$ is the compositum of all the extensions
\[F\left(\sqrt[p]{a}\right),\quad\text{with }a\in \dot{F}.\]
For $n\geq3$, we define recursively the extensions $F^{\{n\}}/F$ and $F^{(n)}/F$ in the following way:
\begin{itemize}
 \item the field $F^{\{n\}}$ denotes the compositum of all the extensions
\[F^{\{n-1\}}\left(\sqrt[p]{\gamma}\right),\quad\text{with }\gamma\in\dot{F^{\{n-1\}}}\smallsetminus\left(\dot{F^{\{n-1\}}}\right)^p,\]
where we put $F^{(2)}$ instead of $F^{\{2\}}$
 \item the field $F^{(n)}$ denotes the compositum of all the extensions
\[F^{(n-1)}\left(\sqrt[p]{\gamma}\right),\quad\text{with }\gamma\in \dot{F^{(n-1)}}\smallsetminus\left(\dot{F^{(n-1)}} \right)^p\]
such that $F^{(n-1)}(\sqrt[p]{\gamma})/F$ is Galois.
\end{itemize}
Notice that all extensions $F^{\{n\}}/F$ and $F^{(n)}/F$ are Galois.

\begin{prop}\label{prop:maxpextwithourfields}
 For any field $F$ one has
\[F(p)=\bigcup_{n>1}F^{(n)}.\]
\end{prop}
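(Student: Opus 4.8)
The plan is to transfer the statement to the Galois side and identify each $F^{(n)}$ with the fixed field of a term of the lower $p$-central series of $G:=G_F(p)=\Gal(F(p)/F)$. The inclusion $\bigcup_{n>1}F^{(n)}\subseteq F(p)$ is the easy half: arguing by induction, if $F^{(n-1)}\subseteq F(p)$ then, since $\mu_p\subseteq F(p)$ and $F(p)$ is the maximal $p$-extension of $F$, it is closed under extraction of $p$th roots (a root $\sqrt[p]{\gamma}$ generates an extension of degree dividing $p$, which must already lie in $F(p)$). Hence every generator $\sqrt[p]{\gamma}$ of $F^{(n)}$ lies in $F(p)$, so $F^{(n)}\subseteq F(p)$.

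The heart of the argument is the claim that $\Gal(F(p)/F^{(n)})=\lambda_n(G)$ for all $n\geq 2$, which I would prove by induction. For the base case, $F^{(2)}=F(\sqrt[p]{F})$ is the maximal elementary abelian $p$-extension of $F$, so by Kummer theory its fixing group is $\Phi(G)=G^p[G,G]=\lambda_2(G)$. For the inductive step, set $V:=\lambda_{n-1}(G)=\Gal(F(p)/F^{(n-1)})$, which is normal in $G$ because $F^{(n-1)}/F$ is Galois. By the Galois correspondence together with Kummer theory, the extensions $F^{(n-1)}(\sqrt[p]{\gamma})$ with $F^{(n-1)}(\sqrt[p]{\gamma})/F$ Galois correspond precisely to the (closed, index-$p$) subgroups $H\leq V$ that are normal in $G$, and $F^{(n)}$ is the fixed field of the intersection of all such $H$. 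I would then verify that this intersection equals $W:=V^p[V,G]=\lambda_n(G)$. Since $G$ is pro-$p$ and $\Aut(\Z/p)$ has no nontrivial $p$-element, the conjugation action of $G$ on any quotient $V/H\cong\Z/p$ is trivial, forcing $V^p\leq H$ and $[V,G]\leq H$, whence $W\leq H$; conversely $V/W$ is an elementary abelian group with trivial $G$-action, so its continuous hyperplanes are automatically $G$-stable and, by Pontryagin duality, have trivial intersection. Thus $\bigcap_H H=W=\lambda_n(G)$, completing the induction.

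With the identification in hand, the compositum $\bigcup_{n>1}F^{(n)}$ is the fixed field of $\bigcap_{n\geq 2}\lambda_n(G)$, so it only remains to show $\bigcap_n\lambda_n(G)=1$. Given $1\neq g\in G$, I would choose a continuous surjection $\phi\colon G\to P$ onto a finite $p$-group $P$ with $\phi(g)\neq 1$; since $P$ is nilpotent, $\lambda_N(P)=1$ for some $N$, and $\phi(\lambda_N(G))=\lambda_N(P)=1$ yields $g\notin\lambda_N(G)$. Hence the intersection is trivial and $\bigcup_{n>1}F^{(n)}=F(p)$.

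The main obstacle is the inductive identification $\Gal(F(p)/F^{(n)})=\lambda_n(G)$, and specifically the verification that the intersection of all $G$-normal index-$p$ subgroups of $\lambda_{n-1}(G)$ is exactly $\lambda_n(G)$: the two containments rest on the complementary facts that $p$th powers and commutators with $G$ land in every admissible $H$, and that the hyperplanes of an $\F_p$-vector space intersect trivially. A secondary subtlety is that $G$ need not be finitely generated, so the fact recorded in the preliminaries that the $\lambda_i$ form a neighborhood basis of $1$ is not directly available; the separation argument through finite $p$-quotients is what replaces it.
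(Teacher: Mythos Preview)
Your argument is correct, but it takes a genuinely different route from the paper's. The paper argues directly on the field side: given any finite Galois $p$-extension $K/F$ with $|K:F|=p^m$, it uses the normal series of the finite $p$-group $\Gal(K/F)$ to produce a tower $F=K_0\subset K_1\subset\cdots\subset K_m=K$ with each $K_i/F$ Galois of degree $p^i$, and then shows by induction that $K_i\subseteq F^{(i+1)}$ (since $K_i=K_{i-1}(\sqrt[p]{\alpha})$ with $\alpha\in K_{i-1}\subseteq F^{(i)}$, and the compositum $K_iF^{(i)}=F^{(i)}(\sqrt[p]{\alpha})$ is Galois over $F$). No identification of $F^{(n)}$ with a specific group-theoretic filtration is needed, and no separation argument for an infinite intersection is required.

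Your approach instead proves the sharper fact $\Gal(F(p)/F^{(n)})=\lambda_n(G)$ and then shows $\bigcap_n\lambda_n(G)=1$ via finite $p$-quotients. This is more structural and in fact anticipates something the paper establishes only later, in \S4.1, where the equality $G^{(n)}=\lambda_n(G)$ is stated and used. So your proof yields strictly more information at this stage, at the cost of a slightly longer argument (the two containments for $\bigcap_H H=V^p[V,G]$ and the residual-nilpotence of $G$). The paper's proof is shorter and stays entirely within elementary Galois theory, while yours makes the link to the $p$-Zassenhaus filtration explicit from the outset.
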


\begin{proof}
 The inclusion $F(p)\supseteq\bigcup_n F^{(n)}$ is obvious.

For the converse, let $K/F$ be a finite Galois $p$-extension of degree $|K:F|=p^m$, for some $m\in\N$.
Then, by the properties of finite $p$-groups, one has a chain
\[F=K_0\subset K_1\subset \ldots\subset K_m=K \]
of fields such that $K_i/F$ is Galois and $|K_i:F|=p^i$ for every $i=1,\ldots,m$.
In particular, for every $i$ the extension $K_{i+1}/K_i$ is cyclic of degree $p$,
and $\Gal(K_{i+1}/K_i)$ is central in $\Gal(K_{i+1}/F)$.

We claim that $K_i\subseteq F^{(i+1)}$ for every $i$.
This is clear for $i=0$ and $i=1$.
Assume $K_{i-1}\subseteq F^{(i)}$ by induction.
Then $K_i/K_{i-1}$ is Galois and cyclic of degree $p$, thus $K_i=K_{i-1}(\sqrt[p]{\alpha})$
with $\alpha\in F^{(i)}$.
Consequently $K_i\subseteq F^{(i+1)}$, and the statement of the Lemma follows.
\end{proof}

Let $\dot{F} :=F\smallsetminus\{0\}$.
Then, for every Galois $p$-extension $K/F$, $\dot{K}$ is a $G_F(p)$-module.
Moreover, since $\sigma.\gamma^p=(\sigma.\gamma)^p$ for any $\gamma\in\dot{F}(p)$ and $\sigma\in G_F(p)$,
one has that $\dot{K}/\dot{K}^p$ is also a $G_F(p)$-module, with $K$ as above.
We denote the module $\dot{F}^{(2)}/(\dot{F}^{(2)})^p$ by $J$.
The module $J$ has been studied in \cite{AdGaKaMi} for $p=2$, and in \cite{secret} for $p$ odd,-
where it has been shown that $J$ provides substantial information about the field $F$.

We can generalize the construction of $J$ in the following way.
For every $n\geq 3$ let
\[J_n=\frac{\dot{F^{(n)}}}{\left({\dot{F^{(n)}}}\right)^p}.\]
We set $J=J_2$.
Then each module $J_n$ is a $\F_p$-vector space and a $G_F(p)$-module.
(Note that the notation used here is different from the one in \cite{AdGaKaMi} and \cite{secret}.)

The following lemma is a well known fact from elementary Galois theory.

\begin{lem}\label{lemmaGalois}
 Let $K/F$ be a Galois $p$-extension of fields, and let $a\in \dot{K}\smallsetminus\dot{K}^p$.
Then $K(\sqrt[p]{a})/F$ is Galois if, and only if,
\[\frac{\sigma.a}{a}\in\dot{K}^p\]
for every $\sigma\in\Gal(K/F)$.
\end{lem}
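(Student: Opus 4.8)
The plan is to set $L:=K(\sqrt[p]{a})$ and reduce the biconditional to a statement about the action of $\Gal(K/F)$ on the class of $a$ in the $\F_p$-vector space $\dot{K}/\dot{K}^p$. Since $F$, and hence $K$, contains a primitive $p$th root of unity $\zeta$, and $a\notin\dot{K}^p$, Kummer theory tells us that $L/K$ is cyclic of degree $p$, generated by $\rho$ with $\rho(\sqrt[p]{a})=\zeta\sqrt[p]{a}$. Because $\cha F\neq p$, the extension $L/F$ is automatically separable, so throughout I may replace ``$L/F$ is Galois'' by the equivalent ``$L/F$ is normal''. I will also use repeatedly that $\bar a$ has order exactly $p$ in $\dot{K}/\dot{K}^p$, and that every $\sigma\in\Gal(K/F)$ preserves $\dot{K}^p$, so $\sigma.a\notin\dot{K}^p$ as well.

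For the direction ($\Leftarrow$), assume $\sigma.a/a\in\dot{K}^p$ for every $\sigma$. I would take an arbitrary $F$-embedding $\tau\colon L\to\bar{F}$, observe that its restriction to $K$ is some $\sigma\in\Gal(K/F)$ because $K/F$ is normal, and then compute $\tau(\sqrt[p]{a})^p=\tau(a)=\sigma.a=ac^p$ for some $c\in\dot{K}$ using the hypothesis. Thus $\tau(\sqrt[p]{a})$ and $c\sqrt[p]{a}$ are two $p$th roots of the same element of $L$, so they differ by a $p$th root of unity, which lies in $K\subseteq L$. Hence $\tau(\sqrt[p]{a})\in L$, giving $\tau(L)\subseteq L$ and therefore $\tau(L)=L$; this proves normality. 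Note this direction uses only the normality of $K/F$, not that it is a $p$-extension.

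For the converse ($\Rightarrow$), assume $L/F$ is Galois and fix $\sigma\in\Gal(K/F)$. Lifting $\sigma$ to some $\tau\in\Gal(L/F)$ (possible since $K/F$ is normal, so restriction $\Gal(L/F)\to\Gal(K/F)$ is surjective), I get $\tau(L)=L$, while $\tau(K(\sqrt[p]{a}))=K(\sqrt[p]{\sigma.a})$; hence the two degree-$p$ Kummer extensions $K(\sqrt[p]{a})$ and $K(\sqrt[p]{\sigma.a})$ of $K$ coincide. By Kummer theory this forces $\sigma.a\equiv a^{j_\sigma}\pmod{\dot{K}^p}$ for a unique $j_\sigma\in(\Z/p)^\times$. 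The crux of the argument—the step I expect to be the real obstacle—is to upgrade ``$j_\sigma$ exists'' to ``$j_\sigma=1$''. For this I would check, using that $\bar a$ has exact order $p$ and that $\sigma$ preserves $\dot{K}^p$, that $\sigma\mapsto j_\sigma$ is a group homomorphism $\Gal(K/F)\to(\Z/p)^\times$. Since $K/F$ is a $p$-extension, $\Gal(K/F)$ is a finite $p$-group, whereas $(\Z/p)^\times$ has order $p-1$ coprime to $p$, so any such homomorphism is trivial. Therefore $j_\sigma=1$ for every $\sigma$, i.e. $\sigma.a/a\in\dot{K}^p$, which completes the proof. It is precisely this use of the $p$-group hypothesis that rules out the potential behaviour $\sigma.a\equiv a^{j}$ with $j\neq 1$, which would otherwise be compatible with normality; the remaining verifications (well-definedness and multiplicativity of $j_\sigma$) are routine and I would not belabour them.
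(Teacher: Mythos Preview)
Your proof is correct. The paper does not actually prove this lemma; it simply records it as ``a well known fact from elementary Galois theory'' and states it without proof. Your argument is the standard one: the Kummer correspondence identifies degree-$p$ cyclic extensions of $K$ with nonzero classes in $\dot{K}/\dot{K}^p$, normality of $L/F$ translates into $\Gal(K/F)$-stability of the line $\langle[a]_K\rangle$, and the $p$-group hypothesis forces the resulting character $\sigma\mapsto j_\sigma$ into $(\Z/p\Z)^\times$ to be trivial. One small remark: the paper later applies the lemma with $K=F^{(n)}$, which need not be finite over $F$; your argument carries over unchanged, since any (continuous) homomorphism from a pro-$p$ group to the group $(\Z/p\Z)^\times$ of order $p-1$ is still trivial.
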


It follows that
\begin{equation}\label{extensionwithJ}
F^{(n+1)}=F^{(n)}\left(\sqrt[p]{(J_n)^G}\right)
\end{equation}
for every $n\geq2$, where $J_n^G$ denotes the submodule of $J_n$ fixed by $G=G_F(p)$.

\section{Rigid fields}\label{sec:3rigfiel}
Given a field $F$, the quotient group $\dot{F}/{\dot{F}}^p$ is a $p$-elementary abelian group,
so that we may consider it as $\F_p$-vector space.
Henceforth we will always assume that $\dot{F}/\dot{F}^p$ is not trivial.
For an element $a\in\dot{F}$, $[a]_F=a{\dot{F}}^p$ denotes the coset of $\dot{F}/\dot{F}^p$ to which $a$ belongs.
In particular, $k.[a]_F=[a^k]_F$ for $k\in\F_p$, and for $a,b\in\dot{F}$, $[a]_F$ and $[b]_F$ are
$\F_p$-linearly independent if, and only if, $F(\sqrt[p]{a})\neq F(\sqrt[p]{b})$.
Moreover, let $\mu_p\subseteq F$ be the group of the roots of unity of order $p$.
Then one may fix an isomorphism $\mu_p\cong\F_p$, so that by Kummer theory one has the isomorphism
\begin{equation}\label{Kummerisomorphism}
\phi\colon\dot{F}/\dot{F}^p\tilde\longrightarrow H^1\left(G_F(p),\F_p\right),\quad
\phi\left([a]_F\right)(\sigma)=\frac{\sigma.\sqrt[p]{a}}{\sqrt[p]{a}}.
\end{equation}

\begin{defi}
Let $N$ denote the norm map $N\colon F(\sqrt[p]{a})\rightarrow F$.
An element $a\in \dot{F}\smallsetminus\dot{F}^p$ is said to be $p$-{\bf rigid} if $b\in N(F(\sqrt[p]{a}))$
implies that $b\in[a^k]_F$ for some $k\geq0$.
The field $F$ is called $p$-{\bf rigid} if every element of $\dot{F}\smallsetminus\dot{F}^p$ is $p$-rigid.
\end{defi}

In \cite{ware}, R. Ware calls a field $F$ {\it hereditarily} $p$-rigid if every $p$-extension of $F$
is a $p$-rigid field. In this paper we shall call such fields \emph{heriditary} $p$-rigid.

\begin{example}
\begin{itemize}
 \item[i.] Let $q$ be a power of a prime such that $p\mid (q-1)$.
Let $F=\F_q((X))$, namely, $F$ is the field of Laurent series on the indeterminate $X$
with coefficients in the finite field $\F_q$.
Then $F$ is $p$-rigid \cite[p. 727]{ware}.
\item[ii.] Let $\zeta$ be a primitive $p$th root of unity and $\ell$ a prime different from $p$.
Then $F=\Q_\ell(\zeta)$ is $p$-rigid.
Indeed, by \cite[Prop.~7.5.9]{nsw} the maximal pro-$p$ Galois group $G_F(p)$
is 2-generated and it has cohomological dimension $cd(G_F(p))=2$.
Hence $G_F(p)$ satisfies Corollary~\ref{corE}, and $F$ is $p$-rigid.
\end{itemize}
\end{example}

\subsection{Powerful pro-$p$ groups}\label{subsec:3.1pwflgps}
A pro-$p$ group $G$ is said to be {\it powerful} if
\[[G,G]\subseteq\left\{\begin{array}{cc}G^p & \text{for }p\text{ odd},\\G^4 & \text{for }p=2,\end{array}\right.\]
where $[G,G]$ is the closed subgroup of $G$ generated by the commutators of $G$,
and $G^p$ is the closed subgroup of $G$ generated by the $p$-powers of the elements of $G$.

Moreover, a finitely generated pro-$p$ group $G$ is called {\it uniformly powerful},
or simply {\it uniform}, if $G$ is powerful, and
\[\left|\lambda_i(G):\lambda_{i+1}(G)\right|=|G:\Phi(G)|\quad\text{for all }i\geq1.\]
A finitely generated powerful group is uniform if and only if it is torsion-free (see \cite[Thm. 4.5]{analytic}).
Finally, a pro-$p$ group $G$ is called {\it locally powerful} if every finitely generated closed subgroup
of $G$ is powerful.

In order to state the classification of torsion-free, finitely generated, locally powerful pro-$p$ groups
-- which we shall use to describe explicitly the maximal pro-$p$ groups of rigid fields in \S~\ref{subsec:3.2G_Fforrigid}
-- we shall introduce the notion of {\it oriented} pro-$p$ groups.

\begin{defi}
 A pro-$p$ group $G$ together with a (continuous) homomorphism $\theta\colon G\rightarrow\Z_p^\times$
is called an {\bf oriented} pro-$p$ group, and $\theta$ is called the orientation of $G$.
If one has that $ghg^{-1}=h^{\theta(g)}$ for every $h\in\kernel(\theta)$ and every $g\in G$,
then $G$ is said to be {\bf$\theta$-abelian}.
\end{defi}

The above definition generalizes to all pro-$p$ groups the notion of cyclotomic character
of an absolute (and maximal pro-$p$) Galois group.
Such homomorphism has been studied previously for maximal pro-$p$ Galois groups in \cite{small}
(where it is called ``cyclotomic pair''), and in \cite{koenig3} for absolute Galois groups.
(See also \cite{jacobeare} for the case $p=2$.)

\begin{prop}[\cite{claudio}, Proposition 3.4]\label{prop:presentationthetabelian}
 Let $G$ be an oriented pro-$p$ group with orientation $\theta$.
Then $G$ is $\theta$-abelian if and only if there exists a minimal set of generators $\{x_\circ,x_i|i\in\mathcal{I}\}$
for some set of indices $\mathcal{I}$, such that $G$ has a presentation
\begin{equation}\label{eq:presentationthetabelian}
G=\left\langle x_\circ,x_i\left|[x_\circ,x_i]=x_i^{\theta(x_\circ)-1},[x_i,x_j]=1,i,j\in\mathcal{I}\right.\right\rangle.
\end{equation}
I.e., $G\cong\Z_p\ltimes Z$, with $Z\cong\Z_p^{\mathcal{I}}$,
and the action of the first factor on $Z$ is the multiplication by $\theta(x_\circ)$.
\end{prop}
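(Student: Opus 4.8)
The plan is to prove the equivalence in two directions, exploiting the presentation \eqref{eq:presentationthetabelian} as the concrete target. For the direction where $G$ admits the presentation, the verification is purely mechanical: given generators satisfying $[x_\circ,x_i]=x_i^{\theta(x_\circ)-1}$ and $[x_i,x_j]=1$, every element $h\in\kernel(\theta)$ is a (pro-$p$) product of the $x_i$ (once we check $\kernel(\theta)$ is topologically generated by the $x_i$), and the relations force $x_\circ h x_\circ^{-1}=h^{\theta(x_\circ)}$. I would then bootstrap from $x_\circ$ to arbitrary $g\in G$ by writing $g=x_\circ^a\cdot z$ with $z\in\kernel(\theta)$ and using that $\kernel(\theta)$ is abelian, so conjugation by $z$ is trivial on $\kernel(\theta)$ and conjugation by $x_\circ^a$ multiplies exponents by $\theta(x_\circ)^a=\theta(x_\circ^a)=\theta(g)$. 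This gives $\theta$-abelianness, identifying $G\cong\Z_p\ltimes Z$ with the stated action.

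The substantive direction is the converse: assuming $G$ is $\theta$-abelian, I must produce the presentation. First I would observe that $\kernel(\theta)$ is abelian. Indeed, for $h,k\in\kernel(\theta)$ the defining relation $ghg^{-1}=h^{\theta(g)}$ applied with $g=k$ gives $khk^{-1}=h^{\theta(k)}=h$, since $k\in\kernel(\theta)$ means $\theta(k)=1$; hence $[h,k]=1$. So $Z:=\kernel(\theta)$ is an abelian pro-$p$ group, and being torsion-free in the relevant cases it is of the form $\Z_p^{\mathcal I}$. Next, since $\theta$ maps into $\Z_p^\times$ whose pro-$p$ Sylow part is procyclic (for $p$ odd, $1+p\Z_p\cong\Z_p$), the image $\theta(G)$ is a procyclic pro-$p$ group, so I can choose $x_\circ\in G$ with $\theta(x_\circ)$ a topological generator of $\theta(G)$. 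Then $G=\overline{\langle x_\circ\rangle}\cdot Z$ and the splitting $G\cong\Z_p\ltimes Z$ follows because $\overline{\langle x_\circ\rangle}\cap Z=1$ (any power of $x_\circ$ lying in $\kernel(\theta)$ must be trivial, as $\theta(x_\circ)$ has infinite order). Choosing an $\Z_p$-basis $\{x_i\}_{i\in\mathcal I}$ of $Z$ and combining with $x_\circ$ yields a minimal generating set, and the relations $[x_i,x_j]=1$ and $[x_\circ,x_i]=x_i^{\theta(x_\circ)-1}$ are exactly the $\theta$-abelian relations transcribed on generators.

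The main obstacle I anticipate is \emph{minimality} of the generating set $\{x_\circ,x_i\}$ and the bookkeeping around $\Z_p$-module structure versus abstract pro-$p$ generation. Concretely, I need $d(G)=1+|\mathcal I|$ (at the level of $G/\Phi(G)$), which requires checking that $x_\circ$ is not redundant modulo the $x_i$ and that the $x_i$ remain $\F_p$-independent in $G/\Phi(G)$ after passing from the $Z$-module basis to the full group. Here I would use that $\Phi(G)=G^p[G,G]$ together with the semidirect decomposition to compute $G/\Phi(G)$ explicitly; the image of $[G,G]$ is controlled by the relation $[x_\circ,x_i]=x_i^{\theta(x_\circ)-1}$, and one must verify $\theta(x_\circ)-1\in p\Z_p$ so that these commutators land in $Z^p\subseteq\Phi(G)$ and do not collapse generators. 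This last point is automatic because $\theta(x_\circ)$ lies in the pro-$p$ part $1+p\Z_p$, hence $\theta(x_\circ)\equiv 1\pmod p$, which is exactly what keeps the $x_i$ independent modulo $\Phi(G)$ and secures minimality.
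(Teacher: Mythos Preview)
The paper does not actually prove this proposition: it is quoted verbatim from \cite{claudio} (Proposition~3.4 there), and the only comment the authors add is the remark that the original proof does not use finite generation. So there is no ``paper's own proof'' to compare against; your sketch stands on its own and is essentially the standard argument.

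Your outline is correct, but two points deserve to be made explicit rather than hand-waved. First, you write that $Z=\kernel(\theta)$ is ``torsion-free in the relevant cases'' and hence $\cong\Z_p^{\mathcal I}$; this torsion-freeness is not part of the stated hypotheses, and without it the equivalence is false (e.g.\ $G=\Z/p\Z$ with $\theta\equiv1$ is $\theta$-abelian but not of the form $\Z_p\ltimes\Z_p^{\mathcal I}$). In the paper's applications $G$ is a maximal pro-$p$ Galois group, hence torsion-free by Becker's theorem, but in a self-contained proof you should either assume it or note that the presentation side of the equivalence forces it. Second, your splitting argument $\overline{\langle x_\circ\rangle}\cap Z=1$ relies on $\theta(x_\circ)$ having infinite order; this holds when $\theta$ is non-trivial (since $1+p\Z_p$ is torsion-free for $p$ odd), but the degenerate case $\theta\equiv1$ should be handled separately, where $G$ is simply free abelian and $x_\circ$ is just one of the basis elements. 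With these two caveats addressed, your argument is complete.
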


\begin{rem}
 Notice that the statement of \cite[Prop.~3.4]{claudio} refers only to finitely generated pro-$p$ groups,
yet the proof does not use this fact, so that it holds also for infinitely generated pro-$p$ groups.
\end{rem}

In fact, torsion-free, finitely generated, locally powerful pro-$p$ groups and $\theta$-abelian groups coincide.

\begin{thm}[\cite{claudio}, Thm. A] \label{thm:thetabelianifflocally}
 A finitely generated uniform pro-$p$ group $G$ is locally powerful
if and only if  there exists an orientation $\theta\colon G\rightarrow\Z_p^\times$
such that $G$ is $\theta$-abelian.
\end{thm}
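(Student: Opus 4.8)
The statement is an equivalence, so the plan is to prove the two implications separately, handling the reverse direction first since it is more elementary.

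For the direction ``$\theta$-abelian $\Rightarrow$ locally powerful'', I would show that (i) $\theta$-abelianness is inherited by closed subgroups, and (ii) any $\theta$-abelian pro-$p$ group is powerful; together these give that every finitely generated closed subgroup is powerful, which is exactly local powerfulness. For (i), if $H\leq G$ is closed then $\kernel(\theta|_H)=H\cap\kernel(\theta)$, and for $h$ in this kernel and $g\in H\leq G$ the relation $ghg^{-1}=h^{\theta(g)}$ persists, so $H$ is $(\theta|_H)$-abelian. For (ii), I would use Proposition~\ref{prop:presentationthetabelian} to write $G\cong\Z_p\ltimes Z$ with $Z=\kernel(\theta)\cong\Z_p^{\mathcal I}$ abelian and first factor acting by multiplication by $\theta(x_\circ)$. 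Since $p$ is odd and $\theta(G)$ is a pro-$p$ subgroup of $\Z_p^\times$, we have $\theta(G)\subseteq 1+p\Z_p$, hence $\theta(g)-1\in p\Z_p$ for all $g$. Writing $g_1,g_2\in G$ as $g_i=x_\circ^{a_i}z_i$ with $z_i\in Z$, a direct computation in the semidirect product gives, in additive notation on $Z$,
\[
[g_1,g_2]=(1-\theta(g_2))\,z_1+(\theta(g_1)-1)\,z_2\in pZ=Z^p.
\]
Therefore $[G,G]\subseteq Z^p\subseteq G^p$, which for $p$ odd is exactly the statement that $G$ is powerful.

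For the harder direction ``locally powerful $\Rightarrow$ $\theta$-abelian'', the plan is to pass to the associated Lazard $\Z_p$-Lie algebra $L=L(G)$ and its base change $L_{\Q_p}=L\otimes_{\Z_p}\Q_p$, and to show that local powerfulness forces $L_{\Q_p}$ to be abelian or ``almost abelian''. The first key step is: for any $x,y\in G$ the closed subgroup $H=\overline{\langle x,y\rangle}$ is powerful (by hypothesis), torsion-free (as a closed subgroup of the torsion-free $G$) and finitely generated, hence uniform by \cite[Thm.~4.5]{analytic}; consequently $\dim H=d(H)\leq 2$. Since $\dim H$ equals the dimension of the $\Q_p$-Lie subalgebra of $L_{\Q_p}$ generated by $x$ and $y$, this yields $[x,y]\in\Q_p x+\Q_p y$ for all $x,y\in L_{\Q_p}$. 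The second key step is the classification of $\Q_p$-Lie algebras with this property: writing $[a,x]=\mu(a)\,x+\ell(x)\,a$ and observing that $\mathrm{ad}_a$ acts as a scalar modulo $\Q_p a$, one deduces that a non-abelian such $L_{\Q_p}$ splits as $\Q_p e_\circ\ltimes V$ with $V$ an abelian ideal of codimension $1$ on which $e_\circ$ acts by a nonzero scalar. Finally I would descend this splitting to the integral level: $V\cap L$ yields the abelian ideal $Z=\kernel(\theta)$, the powerfulness $[L,L]\subseteq pL$ forces the scalar to lie in $p\Z_p$, and exponentiating it produces an orientation $\theta\colon G\to 1+p\Z_p\subseteq\Z_p^\times$ for which a complement acts by multiplication by $\theta(x_\circ)$, i.e.\ $G$ is $\theta$-abelian.

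The main obstacle is this second direction, and within it the transition from the local (pairwise) data to a single global orientation. Concretely, the delicate points are: verifying that $\dim\overline{\langle x,y\rangle}\leq 2$ really translates into the clean bracket condition $[x,y]\in\Q_p x+\Q_p y$ uniformly over all pairs; carrying out the Lie-algebra classification to pin down the almost-abelian shape, where one must see that the scalar $\mu$ is the \emph{same} eigenvalue across all of $V$; and descending from $\Q_p$ back to $\Z_p$ so that the resulting $\theta$ takes values in $\Z_p^\times$ with $\theta(G)\subseteq 1+p\Z_p$, which is precisely where the powerful (and not merely analytic) hypothesis is used. By contrast, the inherited-subgroup observation underlying the easy direction is essentially formal.
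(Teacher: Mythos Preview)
The paper does not contain its own proof of this statement: Theorem~\ref{thm:thetabelianifflocally} is quoted from \cite{claudio} (Theorem~A there) and used as a black box, in particular as input to the proof of Proposition~\ref{prop:thetabelianifflocally-infinite}. Consequently there is no in-paper argument to compare your proposal against.

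That said, your plan is sound and in fact matches the method the paper itself points to in Remark~3.5(ii), where it is noted that the result can be obtained via the $\Z_p$-Lie algebra $\log(G)$ attached to a uniform group, because ``in the case of a locally powerful group, such $\Z_p$-Lie algebra has a very simple shape''. Your easy direction is correct as written: $\theta$-abelianness restricts to closed subgroups, and the semidirect-product computation gives $[G,G]\subseteq Z^p\subseteq G^p$ since $\mathrm{im}(\theta)\subseteq 1+p\Z_p$ for $p$ odd. For the hard direction, the chain ``$H=\overline{\langle x,y\rangle}$ is powerful, torsion-free, finitely generated $\Rightarrow$ uniform $\Rightarrow$ $\dim H=d(H)\leq 2$'' is exactly right, and under the Lazard correspondence a closed uniform subgroup of a uniform group sits inside $\log(G)$ as a $\Z_p$-Lie subalgebra of the same rank, so the bracket condition $[x,y]\in\Q_p x+\Q_p y$ follows. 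The remaining classification of $\Q_p$-Lie algebras with this property, and the descent to an integral orientation with $\theta(x_\circ)\in 1+p\Z_p$, are precisely the points where the work lies; you have identified them accurately, and this is indeed the route taken in \cite{claudio}.
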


Actually, it is possible to extend the above result to infinitely generated pro-$p$ groups.

\begin{prop}\label{prop:thetabelianifflocally-infinite}
A locally powerful torsion-free pro-$p$ group $G$ is $\theta$-abelian
for some orientation $\theta\colon G\rightarrow\Z_p^\times$.
\end{prop}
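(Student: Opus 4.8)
\emph{The plan} is to reduce to the finitely generated case settled by Theorem~\ref{thm:thetabelianifflocally} and then to glue the resulting orientations using a single fixed commutator as an anchor. First note that every finitely generated closed subgroup $H\leq G$ is itself locally powerful (a finitely generated closed subgroup of $H$ is one of $G$, hence powerful) and torsion-free, and being finitely generated and powerful it is uniform by \cite[Thm.~4.5]{analytic}. Thus Theorem~\ref{thm:thetabelianifflocally} supplies an orientation $\theta_H\colon H\to\Z_p^\times$ making $H$ a $\theta_H$-abelian group. If $G$ is abelian the claim is immediate: the trivial orientation $\theta\equiv 1$ has $\kernel(\theta)=G$ and the relation $ghg^{-1}=h=h^{\theta(g)}$ holds trivially. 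So I would assume $G$ is non-abelian and fix $g_\circ,h_\circ\in G$ with $z_\circ:=[g_\circ,h_\circ]\neq 1$.

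The key observation is that this one commutator already determines the orientation. For $g\in G$ put $H_g:=\overline{\langle g_\circ,h_\circ,g\rangle}$, a finitely generated closed subgroup. Since $z_\circ\in[H_g,H_g]\subseteq\kernel(\theta_{H_g})$ (commutators die in the abelian quotient $H_g/\kernel(\theta_{H_g})\hookrightarrow\Z_p^\times$), conjugation by $g$ scales the abelian part of $H_g$, so $g z_\circ g^{-1}=z_\circ^{\theta_{H_g}(g)}\in\overline{\langle z_\circ\rangle}\cong\Z_p$. As $G$ is torsion-free and $z_\circ\neq 1$, this exponent is unique, so I would \emph{define} $\theta(g)\in\Z_p$ to be the unique element with $g z_\circ g^{-1}=z_\circ^{\theta(g)}$. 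By construction $\theta(g)=\theta_{H_g}(g)\in 1+p\Z_p$ (a pro-$p$ group maps into the pro-$p$ part of $\Z_p^\times$, using that $p$ is odd), so $\theta\colon G\to\Z_p^\times$. Since $\theta(g)$ is a unit, $g$ normalises $C:=\overline{\langle z_\circ\rangle}\cong\Z_p$, and $\theta$ is exactly the conjugation action $G\to\Aut(C)=\Z_p^\times$, hence continuous.

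It then remains to verify that $\theta$ is a homomorphism and orients $G$. The homomorphism property follows from
\[
gg'\,z_\circ\,(gg')^{-1}=g\,z_\circ^{\theta(g')}\,g^{-1}=\bigl(g z_\circ g^{-1}\bigr)^{\theta(g')}=z_\circ^{\theta(g)\theta(g')},
\]
which compared with $z_\circ^{\theta(gg')}$ gives $\theta(gg')=\theta(g)\theta(g')$ by uniqueness of exponents. For the $\theta$-abelian relation, fix $g\in G$ and $h\in\kernel(\theta)$ and pass to $H:=\overline{\langle g_\circ,h_\circ,g,h\rangle}$. On $H$ both $\theta_H$ and $\theta|_H$ satisfy $x z_\circ x^{-1}=z_\circ^{(\cdot)}$ for the anchor $z_\circ\in\kernel(\theta_H)$, so torsion-freeness forces $\theta_H=\theta|_H$; in particular $\kernel(\theta)\cap H=\kernel(\theta_H)$ is abelian and contains $h$. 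Hence $ghg^{-1}=h^{\theta_H(g)}=h^{\theta(g)}$, which is precisely the $\theta$-abelian condition.

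The main obstacle is exactly the compatibility invoked above: the orientations $\theta_H$ coming from Theorem~\ref{thm:thetabelianifflocally} are defined subgroup-by-subgroup and, as the abelian case shows, are not unique in general, so a priori they need not cohere. Anchoring everything to the single nontrivial commutator $z_\circ$ -- which automatically lies in the abelian part of every relevant finitely generated subgroup because it is a commutator -- together with torsion-freeness is what pins down one scalar $\theta(g)$ and forces all the local orientations to agree. Once this is in place, continuity (read off from the conjugation action on $C\cong\Z_p$) and the $\theta$-abelian relation (read off inside an ambient finitely generated subgroup) are routine.
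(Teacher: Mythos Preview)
Your argument is correct and is, if anything, cleaner than the paper's. Both proofs dispose of the abelian case trivially and invoke Theorem~\ref{thm:thetabelianifflocally} on finitely generated subgroups, but the gluing is done differently. The paper builds the kernel of the desired orientation globally: it sets $H=[G,G]$, takes $Z$ to be the closure of the elements some power of which lies in $H$, argues that $Z$ is the union of the local kernels $Z_C=\kernel(\theta_C)$ and that $G/Z\cong\Z_p$, then picks a generator $x_0$ whose commutation with $Z$ has minimal $p$-adic valuation and declares $\theta(x_0)=1+\lambda_{x_0}$. Your route instead anchors everything to a single nontrivial commutator $z_\circ$ and reads off $\theta(g)$ from the conjugation action on $\overline{\langle z_\circ\rangle}\cong\Z_p$; the local orientations are forced to agree because $z_\circ$, being a commutator, automatically lies in every $\kernel(\theta_H)$ and torsion-freeness makes the exponent unique. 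What your approach buys is that compatibility of the $\theta_H$'s is immediate and continuity comes for free from the conjugation action on a normal procyclic subgroup, whereas the paper's structural description of $Z$ (as the centraliser/preimage of $[G,G]$) makes the quotient $G/Z$ visible but leaves several verifications (e.g.\ that $G/Z\cong\Z_p$, or that $\theta$ so defined restricts to each $\theta_C$) to the reader.
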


\begin{proof}
By Theorem~\ref{thm:thetabelianifflocally}, we are left to the case when $G$ is infinitely generated.
If $G$ is abelian, then $G$ is $\theta$-abelian with $\theta\equiv{\mathbf 1}$.
Hence, suppose $G$ is non-abelian.

%\medskip

%1. Let $C<G$ be any finitely generated subgroup.
%Since $G$ is torsion-free, $C$ is a uniformly powerful group,
%therefore we can associate to $C$ the $\Z_p$-Lie algebra $L_C=\log(C)$ via the functor $\log$
%(see \cite[\S4.5]{analytic} and \cite[\S3.1]{claudio}).
%In particular, since $C$ is locally powerful, one has $L_C=M_C\oplus A_C$, with $A_C\cong\Z_p$,
%such that $M_C$ is an abelian $\Z_p$-Lie algebra and
%\[\left(L_C, L_C\right)=\left(A_C, M_C\right)=\lambda_C M_C\]
%where $(\_,\_)$ denotes the Lie brackets and $(L_C ,L_C)$ is the derived algebra of $L_C$, and $\lambda_C\in p\Z_p$
%\cite[Fact 3.5 and Proposition 3.6]{claudio}.
%Thus, one has
%\begin{equation}\label{liealgebras}
% (L,L)=\bigcup_{C<G}(L_C, L_C)\quad\text{and}\quad (L,L)=\lambda M,
%\end{equation}
%with $M=\bigcup_C M_C$ and $\lambda\in p\Z_p$, where the $C$ run over all the finitely generated subgroups of $G$.
%Therefore, one has $L=M\oplus A$, with $A\cong\Z_p$.

%Let $Z<G$ be the subgroup such that $\log(Z)=M$, and let $x\in G$ such that $\log(\langle x\rangle)=A$.
%Then $Z$ is abelian, and $[x,y]=y^{\lambda'}$ for every $y\in Z$ and for some $\lambda'\in p\Z_p$.
%Hence, let $\theta\colon G\rightarrow\Z_p^\times$
%be the continuous homomorphism such that $\kernel(\theta)=Z$ and $\image(\theta)$ is generated by $1+\lambda'$.
%Then $G$ is $\theta$-abelian.

Let $C<G$ be any finitely generated subgroup.
Thus $C$ is $\theta_C$-abelian, for some homomorphism $\theta_C$.
In particular, let $H_C=[C,C]$ be the commutator subgroup of $C$, and let $Z_C=\kernel(\theta_C)$.
Then $Z=\Cent_C(H)$, and $H_C=Z_C^{\lambda_C}$, for some $\lambda_C\in p\Z_p$.

Let $H=[G,G]$ be the commutator subgroup of $G$, and let $Z\leq G$ be the subgroup generated
by all the elements $y\in G$ such that $y^\lambda\in H$ for some $\lambda\in p\Z_p$.
Then one has
\[H=\overline{\bigcup_{C<G}H_C}\quad\text{and}\quad Z=\overline{\bigcup_{C<G}Z_C},\]
where $\overline{*}$ denotes the pro-$p$ closure inside $G$.
Notice that all the $H_C$ and the $Z_C$ (and thus also $H$ and $Z$) are abelian.
In particular, $G\supsetneq Z$, since $G$ is non-abelian, and $G/Z\cong \Z_p$.

For every element $x\in G\smallsetminus Z$, one has $[x,Z]=Z^{\lambda_x}$ for some $\lambda_x\in p\Z_p$
and take $x_0$ among all such $x$ such that $\lambda_{x_0}$ is minimal $p$-adic value.
Define the homomorphism $\theta\colon G\rightarrow \Z_p^\times$
such that $\kernel(\theta)=Z$ and $\theta(x_0)=1+\lambda_{x_0}$.
Then $\theta|_C=\theta_C$ for every finitely generated large enough subgroup $C<G$, so that $G$ is in fact $\theta$-abelian.
\end{proof}

\begin{rem}
\begin{itemize}
 \item[i.] Notice that, although the theory of powerful pro-$p$ groups works effectively only for finitely generated groups,
it extends nicely to the infinitely generated case when we assume local powerfulness.
\item[ii.] It is possible to prove Proposition~\ref{prop:thetabelianifflocally-infinite} using methods from
Lie theory, since every uniformly powerful pro-$p$ group $G$ is associated to a $\Z_p$-Lie algebra $\log(G)$
(see \cite[\S4.5]{analytic} and \cite[\S3.1]{claudio}).
In the case of a locally powerful group, such $\Z_p$-Lie algebra has a very simple shape,
so that it is possible to ``linearize'' the proof.
\end{itemize}
\end{rem}

\subsection{The maximal pro-$p$ Galois group of a rigid field}\label{subsec:3.2G_Fforrigid}
Throughout this subsection we shall denote the maximal pro-$p$ Galois group $G_F(p)$ simply by $G$.
Let $a,b\in\dot{F}$.
The cyclic algebra $(a,b)_F$ is the $F$-algebra generated by elements $u,v$ subject to the relations
$u^p=a$, $v^p=b$ and $uv=\zeta_pvu$, where $\zeta_p$ is a $p$th primitive root of unity.

From \cite[Ch. XIV, \S 2, Proposition 5]{SerreLF}, one knows that $[(a,b)_F]=1$ in the Brauer group $\bra(F)$ if and only if
$\chi_a\cup\chi_b=0$ in $H^2(G,\F_p)$, with $\chi_a=\phi([a]_F)$ and $\chi_b=\phi([b]_F)$ as in (\ref{Kummerisomorphism}).
(For the definition and the properties of the Brauer group of a field see \cite[Ch. 2]{GilleSzamuely}.)
Moreover, it is well known that $[(a,b)_F]=1$ if, and only if, $b$ is a norm of $F(\sqrt[p]{a})$.
Therefore, $F$ is $p$-rigid if, and only if, the map
\begin{equation}\label{wedgecup}
 \Lambda_2(\cup)\colon H^1(G,\F_p)\wedge H^1(G,\F_p)\longrightarrow H^2(G,\F_p),
\end{equation}
induced by the cup product, is injective.
%{\color{red} olo}

The following theorem is due to P. Symonds and Th. Weigel.
\begin{thm}[\cite{symondsthomas}, Thm. 5.1.6]\label{thmsymondsthomas}
Let $G$ be a finitely generated pro-$p$ group.
Then the map
\[\Lambda_2(\cup)\colon H^1(G,\F_p)\wedge H^1(G,\F_p)\longrightarrow H^2(G,\F_p)\]
is injective if, and only if, $G$ is powerful.
\end{thm}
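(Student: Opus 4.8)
The plan is to translate both conditions into the single piece of linear-algebra data carried by the quotient $\Phi(G)/\lambda_3(G)$, and to identify $\Lambda_2(\cup)$ with a pairing against the relations of $G$. Since $p$ is odd (our standing assumption), I would first record two elementary reformulations. On the group side, $G$ is powerful if and only if $[G,G]\subseteq G^p$, i.e. $\Phi(G)=G^p$, because $\Phi(G)=G^p[G,G]$ by \cite[Prop.~1.13]{analytic}. On the cohomology side, write $V=G/\Phi(G)=H^1(G,\F_p)^\vee\cong\F_p^d$ and view it also as the elementary abelian group $Q=G/\Phi(G)$; then $H^\bullet(Q,\F_p)$ is the tensor product of an exterior algebra on $H^1$ with a polynomial algebra on the Bocksteins, so for $p$ odd the cup product embeds $\Lambda^2 V^\vee=\Lambda^2 H^1(G,\F_p)$ as a direct summand of $H^2(Q,\F_p)$, complementary to the Bockstein image. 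Since $\Lambda_2(\cup)$ factors through inflation along $G\twoheadrightarrow Q$, its injectivity is governed by how inflation collapses this exterior summand.

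To make this precise I would pass to a minimal presentation $1\to R\to F\to G\to 1$ with $F$ free pro-$p$ of rank $d=d(G)$ and $R\subseteq\Phi(F)$. By the standard description of the second cohomology (see \cite{nsw}) one has $H^2(G,\F_p)\cong\left(R/R^p[R,F]\right)^\vee$, and minimality gives $R\cap\lambda_3(F)=R^p[R,F]$, so the relation module $\bar R:=R/R^p[R,F]$ embeds into $\Phi(F)/\lambda_3(F)$. For $p$ odd there is a canonical splitting $\Phi(F)/\lambda_3(F)\cong V\oplus\Lambda^2 V$ into the $p$-power coordinates $x_i^p$ and the commutator coordinates $[x_i,x_j]$. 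The key computation — the pro-$p$ Magnus/initial-form pairing (Lazard--Labute; cf. \cite{nsw}) — is that under $H^2(G,\F_p)=\bar R^\vee$ the class $\chi_i\cup\chi_j$ pairs with a relation $\rho\in\bar R$ via the $[x_i,x_j]$-coordinate of $\rho$, while the Bockstein $\beta\chi_i$ pairs via the $x_i^p$-coordinate. Consequently the dual of $\Lambda_2(\cup)$ is exactly the projection $\bar R\to\Lambda^2 V$ onto commutator coordinates, and $\Lambda_2(\cup)$ is injective if and only if this projection is surjective.

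I would then read this surjectivity off inside $G$. Since $\Phi(G)/\lambda_3(G)=\Phi(F)/(\lambda_3(F)R)=(V\oplus\Lambda^2 V)/\bar R$, let $P\colon V\to\Phi(G)/\lambda_3(G)$ and $c\colon\Lambda^2 V\to\Phi(G)/\lambda_3(G)$ be the $p$-power and commutator maps; because $\Phi(G)=G^p[G,G]$ one has $\image(P)+\image(c)=\Phi(G)/\lambda_3(G)$. Unwinding the previous paragraph, the commutator projection of $\bar R$ is onto if and only if $\image(c)\subseteq\image(P)$, equivalently $[G,G]\subseteq G^p\lambda_3(G)$. The forward direction is then immediate: if $G$ is powerful then $[G,G]\subseteq G^p\subseteq G^p\lambda_3(G)$, so $\Lambda_2(\cup)$ is injective. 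For the converse I must upgrade $[G,G]\subseteq G^p\lambda_3(G)$ to $[G,G]\subseteq G^p$. Here I would set $M=\Phi(G)/G^p$, a finitely generated pro-$p$ group on which $G$ acts; the congruence says $M=M^p[M,G]$, and since the augmentation ideal of $\F_p[[G]]$ is its Jacobson radical, Nakayama applied to the finite-dimensional $\F_p[[G]]$-module $M/\Phi(M)$ forces $M=1$, i.e. $\Phi(G)=G^p$ and $G$ is powerful. This is the only place finite generation is genuinely used.

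The main obstacle is the pairing identification of the second paragraph: making rigorous that, inside $H^2(G,\F_p)$ itself — not merely in $H^2(G/\lambda_3(G),\F_p)$ — the cup product reads off precisely the commutator coordinates of the relations and nothing more. The cleanest way to secure this is the relation-module computation above, which lands directly in $H^2(G,\F_p)$ and thereby sidesteps the subtlety that inflation $H^2(G/\lambda_3(G),\F_p)\to H^2(G,\F_p)$ need not be injective. A secondary technical point is the splitting $\Phi(F)/\lambda_3(F)\cong V\oplus\Lambda^2 V$ together with $R\cap\lambda_3(F)=R^p[R,F]$, both of which rest on $p$ being odd; for $p=2$ the $p$-power and squaring contributions interact and the statement must be modified, consistent with the paper's standing assumption that $p$ is odd.
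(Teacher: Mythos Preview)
The paper does not prove this theorem; it is quoted verbatim from \cite[Thm.~5.1.6]{symondsthomas} and used as a black box, so there is no in-paper argument to compare your proposal against.

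That said, your strategy is the standard one and is essentially what Symonds--Weigel do: pass to a minimal presentation $1\to R\to F\to G\to1$, identify $H^2(G,\F_p)$ with $(R/R^p[R,F])^\vee$, and use the Magnus/Lazard initial-form computation to see that $\chi_i\cup\chi_j$ detects precisely the $[x_i,x_j]$-coordinate of a relation. The translation to $[G,G]\subseteq G^p\lambda_3(G)$ is correct. One small inaccuracy: in the Nakayama step you assert that $M/\Phi(M)$ is a \emph{finite-dimensional} $\F_p[[G]]$-module, but $\Phi(G)$ need not be finitely generated, so this is not obvious a priori. What you actually need (and have) is that $\bar M:=M/\Phi(M)$ is a \emph{compact} $\F_p[[G]]$-module with $\bar M=I\bar M$; the profinite Nakayama lemma then gives $\bar M=0$, hence $M=\Phi(M)$, hence $M=1$. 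With that adjustment the argument goes through, and it is indeed here---and only here---that finite generation of $G$ is used, via the fact that $I$ is the Jacobson radical of the local ring $\F_p[[G]]$ and that the $\lambda_n(G)$ form a neighbourhood basis of $1$.
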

By (\ref{duality}), (\ref{Kummerisomorphism}), and \cite{janswallow} this implies the following.

\begin{prop}\label{prop:rigidiffpowerful}
Assume that $\dim_{\F_p}(\dot{F}/\dot{F}^p)$ is finite.
Then $F$ is rigid if and only if $G$ is powerful.
Moreover, $F$ is hereditary $p$-rigid if and only if $G$ is locally powerful.
\end{prop}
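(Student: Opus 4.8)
The plan is to deduce both equivalences directly from the cup-product characterization of $p$-rigidity established just above together with Theorem~\ref{thmsymondsthomas}, treating the hereditary statement by descent along the Galois correspondence. The first equivalence is essentially immediate: the hypothesis $\dim_{\F_p}(\dot F/\dot F^p)<\infty$ says, via the Kummer isomorphism (\ref{Kummerisomorphism}) and the duality (\ref{duality}), that $d(G)=\dim_{\F_p}H^1(G,\F_p)<\infty$, i.e.\ $G$ is finitely generated. We already observed that $F$ is $p$-rigid precisely when the map $\Lambda_2(\cup)$ of (\ref{wedgecup}) is injective, and for finitely generated $G$ Theorem~\ref{thmsymondsthomas} identifies injectivity of $\Lambda_2(\cup)$ with powerfulness of $G$. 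Chaining the two equivalences yields ``$F$ is $p$-rigid $\iff G$ is powerful''.

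For the \emph{moreover} part I would first set up the dictionary. Let $K$ be any subextension of $F(p)/F$ and let $C=\Gal(F(p)/K)$ be the corresponding closed subgroup of $G$ under the Galois correspondence. Since $F(p)$ admits no nontrivial $p$-extension, it is also the maximal $p$-extension of $K$, so $K(p)=F(p)$ and hence $G_K(p)=C$; as $K\supseteq F$ contains $\mu_p$, Kummer theory applied at $K$ gives $\dot K/\dot K^p\cong H^1(C,\F_p)$. This module-theoretic compatibility is where the input of \cite{janswallow} enters. Consequently the first equivalence, read off over $K$ in place of $F$, says that \emph{$K$ is $p$-rigid iff $C$ is powerful}, provided $C$ is finitely generated so that $\dim_{\F_p}(\dot K/\dot K^p)=d(C)<\infty$.

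Now the two directions. If $G$ is locally powerful, then in particular $G$ is itself powerful (being a finitely generated closed subgroup of itself), hence of finite rank by \cite[\S3]{analytic}; by the description $\rank(G)=\sup_{C\leq G}d(C)$ recalled in \S\ref{subsec:2.1pgps}, every closed subgroup $C$ is then finitely generated. So for each intermediate $K$ the group $C=G_K(p)$ is finitely generated and powerful, whence $K$ is $p$-rigid, and $F$ is hereditary $p$-rigid. Conversely, if $F$ is hereditary $p$-rigid, take any finitely generated closed $C\leq G$ and put $K=F(p)^C$; then $K$ is a subextension of $F(p)/F$, hence $p$-rigid, and since $G_K(p)=C$ is finitely generated the first equivalence forces $C$ to be powerful. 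As $C$ was an arbitrary finitely generated closed subgroup, $G$ is locally powerful.

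I expect the only real obstacle to be the finite-generation bookkeeping, since the first equivalence applies solely over a field with $\dim_{\F_p}(\dot K/\dot K^p)<\infty$: one must guarantee $d(C)<\infty$ before invoking it. This is secured in one direction by the finite-rank property of finitely generated powerful pro-$p$ groups, and in the other by restricting attention to finitely generated $C$ at the outset. The accompanying point to get right is the identification $G_K(p)=C$, i.e.\ that $F(p)=K(p)$ for every intermediate $K$, which is what makes the descent legitimate.
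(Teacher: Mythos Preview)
Your proposal is correct and follows essentially the same approach as the paper: the first equivalence is obtained by combining the Kummer isomorphism~(\ref{Kummerisomorphism}), the duality~(\ref{duality}), the cup-product characterization of $p$-rigidity~(\ref{wedgecup}), and Theorem~\ref{thmsymondsthomas}. The paper's own proof is a one-line reference to these ingredients, and you have supplied the obvious unpacking.

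For the ``moreover'' clause the paper is silent on details, so your argument is a legitimate completion rather than a departure. Your use of the finite-rank property of finitely generated powerful pro-$p$ groups (from \cite[\S3]{analytic}) to guarantee that every closed subgroup is finitely generated is exactly the right bookkeeping device, and the converse direction you give is precisely what the paper records in Remark~\ref{ref:rigidiffpwflinfty}. One cosmetic point: the identification $G_K(p)=C$ and $\dot K/\dot K^p\cong H^1(C,\F_p)$ is plain Kummer theory and Galois correspondence; it does not actually require input from \cite{janswallow}, so that citation in your dictionary paragraph is superfluous (the paper's own invocation of \cite{janswallow} at this spot is equally opaque).
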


\begin{rem}\label{ref:rigidiffpwflinfty}
The hereditary $p$-rigidity of $F$ implies the locally
powerfulness of $G$ for all fields by Proposition \ref{prop:rigidiffpowerful} and the definition of locally powerful groups.
\end{rem}

Furthermore, it is possible to deduce that in the case $\dot{F}/\dot{F}^p$ is finite,
$p$-rigidity implies hereditary $p$-rigidity.
Indeed, for a field $F$ such that $\dim_{\F_p}(\dot{F}/\dot{F}^p)<\infty$, by \cite[Thm.~B]{claudio}
one has that either $G_F(p)$ is locally powerful, or it contains a closed non-abelian free pro-$p$ group.
For a $p$-rigid field, $G_F(p)$ is powerful, and therefore necessarily it is locally powerful,
since powerful pro-$p$ groups contain no closed non-abelian free pro-$p$ groups,
and thus $F$ is hereditary $p$-rigid.

\begin{rem}
 Ware provided the same description for the maximal pro-$p$ Galois group $G_F(p)$ of a hereditary $p$-rigid field $F$,
but with the further assumption that $F$ contains also a primitive $p^2$th root of unity \cite[Thm.~2]{ware}.
The third author already got rid of such assumption in \cite[Cor.~4.9]{claudio}.
\end{rem}

\begin{rem}
Observe that one can prove directly  for all fields $F$ that if $G = G_F(p)$ is poweful then $F$ is $p$-rigid.
Indeed if we assume using a contradiction argument that $G$ is powerful but $F$ is not $p$-rigid, then by \cite[Lemma 4]{ware1}, $G$ will have as a quotient the group $H_{p^3}$ (the unique non-abelian group of order $p^3$ and exponent $p$.) But this means that $G/G^p$ is non-abelian and therefore $G$ is not powerful. On the other hand, from the explicit form of $G_F(p)$ for each rigid field, we shall see (Corollary \ref{corB} and Theorem  \ref{prigid}) that $G/G^p$ is abelian and hence $G$ is powerful. Thus $F$ is $p$-rigid if and only if $G$ is powerful.
\end{rem}

\subsection{Rigidity implies hereditary rigidity}\label{subsec:3.3rigidityheredrigidity}
As above, let $G=G_F(p)$.
Let $\bra_p(F)$ denote the subgroup of $\bra(F)$ consisting of elements of order $p$.
From Merkuryev and Suslin's work, an element of $\bra_p(F)$ is a product of cyclic algebras,
i.e., one has the following commutative diagram:
\begin{displaymath}
 \xymatrix{ \dot{F}/\dot{F}^p\wedge\dot{F}/\dot{F}^p\ar[d]^{\phi\wedge\phi}\ar@{->>}[rr] && \bra_p(F)\ar[d]\\
H^1(G,\F_p)\wedge H^1(G,\F_p)\ar@{->>}[rr]^-{\Lambda_2(\cup)} && H^2(G,\F_p) }
\end{displaymath}
where the vertical arrows are isomorphisms, and $\phi$ is the Kummer isomorphism as in (\ref{Kummerisomorphism}).
Therefore $\bra_p(F)$ is a quotient of $\dot{F}/\dot{F}^p\wedge\dot{F}/\dot{F}^p$.
(In particular, if $F$ is $p$-rigid, also the horizontal arrows are isomorphisms.)
Hence, the following hold in $\bra_p(F):$
\begin{eqnarray}\label{cyclicalgebras0}
 && \left[(b,a)_F\right]=\left[(a,b)_F\right]^{-1}\\
 && \left[(ab,c)_F\right]=\left[(a,c)_F\right]\cdot\left[(b,c)_F\right] \label{cyclicalgebras1}\\
 && \left[(a^k,b)_F\right]=\left[(a,b)_F\right]^k=\left[(a,b^k)_F\right]\label{cyclicalgebras2}
\end{eqnarray}
for every $a,b,c\in\dot{F}$ and $k\in\F_p$.

Let $E/F$ be a cyclic extension of degree $p$, namely, $E=F(\sqrt[p]{a})$ with $a\in\dot{F}\smallsetminus\dot{F}^p$.
Let
\[\epsilon\colon\dot{F}/\dot{F}^p\longrightarrow\dot{E}/\dot{E}^p\]
be the homomorphism induced by the inclusion $F\hookrightarrow E$.

\begin{lem}\label{lem:janswallow}
Let $F$ be $p$-rigid. For $E/F$ as above, one has
\[\dot{E}/\dot{E}^p=\left\langle[\sqrt[p]{a}]_F\right\rangle\oplus\epsilon\left(\dot{F}/\dot{F}^p\right)\]
as $\F_p$-vector space.
\end{lem}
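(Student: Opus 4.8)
The plan is to work with the cyclic group $\Gal(E/F)=\langle\sigma\rangle$, fixing $\sigma$ so that $\sigma(\sqrt[p]{a})=\zeta\sqrt[p]{a}$ for a primitive $p$th root of unity $\zeta\in F$, and to study the $\F_p[\langle\sigma\rangle]$-module $M:=\dot{E}/\dot{E}^p$ through the nilpotent operator $t:=\sigma-1$ (here $[\sqrt[p]{a}]$ denotes the class of $\pi:=\sqrt[p]{a}$ in $M$). Writing $N=N_{E/F}$, two elementary computations are the backbone: $N(\pi)=\zeta^{p(p-1)/2}\pi^{p}=a$ (the exponent is divisible by $p$ since $p$ is odd), and $N(f)=f^{p}$ for $f\in\dot{F}$. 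The leverage from $p$-rigidity is the single inclusion $N(\dot{E})\subseteq\bigcup_{k}a^{k}\dot{F}^{p}$.

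For directness I would argue with the norm. If $[\pi]_E^{k}\in\epsilon(\dot{F}/\dot{F}^p)$, say $\pi^{k}=b\,e^{p}$ with $b\in\dot{F}$ and $e\in\dot{E}$, then applying $N$ gives $a^{k}=N(\pi)^{k}=b^{p}N(e)^{p}\in\dot{F}^{p}$; since $a\notin\dot{F}^{p}$ this forces $p\mid k$, so $\langle[\pi]_E\rangle\cap\epsilon(\dot{F}/\dot{F}^p)=0$. This disposes of the potentially delicate case $\zeta_{p^{2}}\in E$ uniformly, with no separate discussion. For generation I would first reduce to the image of $t$: given $y\in\dot{E}$, rigidity yields $N(y)=a^{k}c^{p}$, and replacing $y$ by $y\pi^{-k}c^{-1}$ alters $[y]_E$ only by an element of $\langle[\pi]_E\rangle+\epsilon(\dot{F}/\dot{F}^p)$ while achieving $N(y)=1$. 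By Hilbert 90 a norm-one element has the form $\sigma(z)/z$, so the adjusted class lies in $\image(t)$. Thus generation reduces to the crux $\image(\sigma-1)\subseteq\epsilon(\dot{F}/\dot{F}^p)$.

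The crux is where rigidity does the real work, and it is the step I expect to be the main obstacle. Since $\sum_{i=0}^{p-1}\sigma^{i}=(\sigma-1)^{p-1}$ in $\F_p[\langle\sigma\rangle]$ and $N(\dot E)\subseteq\bigcup_k a^k\dot F^p\subseteq\dot{E}^{p}$ (because $a=\pi^{p}$), the induced norm operator $t^{p-1}$ vanishes on $M$; that is $I_{p-1}=0$, where $I_{j}:=\image(t^{j})$. Next, for a norm-one $u$ I would write $u=\sigma(v)/v$ by Hilbert 90 and, using rigidity, $v=\pi^{m}c\,v_{0}$ with $c\in\dot F$ and $N(v_{0})=1$; then $u=\zeta^{m}\,\sigma(v_{0})/v_{0}$, so $[u]_E=m[\zeta]_E+t[v_{0}]$, where $t[v_0]\in I_2$ because $[v_0]\in\image(t)$. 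Applying $t^{j-1}$ to this identity and using $t[\zeta]_E=0$ (as $\zeta\in F$) gives $t^{j}[x]\in I_{j+1}$ for every $x$ and every $j\geq2$, i.e. $I_{2}\subseteq I_{3}\subseteq\cdots\subseteq I_{p-1}=0$. Hence $I_{2}=0$, and feeding this back into $[u]_E=m[\zeta]_E+t[v_0]$ yields $\image(t)=\langle[\zeta]_E\rangle\subseteq\epsilon(\dot{F}/\dot{F}^p)$, which is precisely the crux.

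Combining the two halves gives $M=\langle[\pi]_E\rangle\oplus\epsilon(\dot{F}/\dot{F}^p)$. The genuinely delicate point is the descending induction collapsing the filtration $\{I_{j}\}$: for $p=3$ it is immediate since $I_{p-1}=I_2=0$, but for $p\geq5$ one must check carefully that the decomposition $v=\pi^{m}c\,v_{0}$ supplied by rigidity interacts with $t$ so that each application strictly raises the $t$-depth. This interplay is the heart of the argument and the precise place where $p$-rigidity, rather than mere solvability of $G_F(p)$, is indispensable.
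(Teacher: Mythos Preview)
Your argument is correct and takes a genuinely different route from the paper's proof. The paper does not argue directly: it invokes the Galois-module structure theorem of Min\'a\v{c}--Swallow \cite{janswallow}, which decomposes $\dot E/\dot E^p=X\oplus Z$ with $X$ indecomposable cyclic and $Z$ trivial, and then reads off the lemma from the dichotomy $\dim_{\F_p}X\in\{1,2\}$ according to whether $\zeta_p$ is a norm from $E$. Your proof, by contrast, is entirely self-contained: the directness step via the norm is immediate, and the generation step is reduced by Hilbert~90 to the inclusion $\image(\sigma-1)\subseteq\epsilon(\dot F/\dot F^p)$, which you obtain by the elegant filtration collapse $I_1\subseteq\langle[\zeta]_E\rangle+I_2$ together with $t[\zeta]_E=0$ and $t^{p-1}=0$ on $M$. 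This bypasses the external module classification and isolates exactly where $p$-rigidity enters (namely in forcing every $t[v]$ into $\langle[\zeta]_E\rangle+I_2$). What the paper's approach buys is a conceptual picture of the full $\F_p[\langle\sigma\rangle]$-module structure of $\dot E/\dot E^p$, including the information that $X$ is indecomposable and at most two-dimensional; what your approach buys is elementarity, independence from \cite{janswallow}, and a uniform treatment that does not split into the cases $\zeta_p\in N_{E/F}(\dot E)$ and $\zeta_p\notin N_{E/F}(\dot E)$. Both reach the same endpoint, and your descending-induction step (forcing $I_2=I_3=\cdots=I_{p-1}=0$) is in effect a direct verification, under rigidity, that the non-trivial Jordan block of $\sigma-1$ on $M$ has length at most~$2$---precisely the content of the cited module theorem in this situation.
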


\begin{proof}
By \cite[Thm. 1]{janswallow}, one has that $J=X\oplus Z$ as $G$-module,
where $X$ is an irreducible $G$-module, and $Z$ is a trivial $G$-module.
Obviously, one has the following inclusions:
\begin{equation}\label{lemmajanswall}
 \left\langle[\sqrt[p]{a}]_F\right\rangle\subseteq X,\quad \epsilon\left(\dot{F}/\dot{F}^p\right)\subseteq J^G,\quad
Z\subseteq J^G.
\end{equation}
Fix a primitive $p$th root $\zeta_p$.
If $\zeta_p$ is a norm of $E/F$, then by \cite[Cor.~1]{janswallow} one has $\dim(X)=1$,
so that $J^G=J$, and by \cite[Lemma 2]{janswallow} the claim follows.
Otherwise, by \cite[Cor.~1]{janswallow} one has $\dim(X)=2$,
so that necessarily
\[X=\left\langle[\sqrt[p]{a}]_F,[\zeta_p]_F\right\rangle,\]
for $[\zeta_p]_F=(\sigma-1)[\sqrt[p]{a}]_F$, with $\sigma$ a suitable generator of $\Gal(E/F)$,
and by \cite[Lemma 2]{janswallow} one has $J^G=\epsilon(\dot{F}/\dot{F}^p)$,
so that by (\ref{lemmajanswall}) the claim follows.
\end{proof}

\begin{lem}\label{lem:brEF}
 Let $E/F$ be as above, and let $\bra(E/F)\leq\bra(F)$ be the kernel of the morphism
\[\bra(F)\longrightarrow\bra(E),\quad \left[(b,c)_F\right]\longmapsto\left[(b,c)_F\otimes_FE\right].\]
Then $[(b,c)_F]\in\bra(E/F)$ if, and only if, $[(b,c)_F]=[(a,d)_F]$ for some $d\in\dot{F}$.
\end{lem}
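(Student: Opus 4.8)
The plan is to characterize the kernel $\bra(E/F)$ using the standard exact sequence in Brauer theory together with the $p$-rigidity description of $\dot{E}/\dot{E}^p$ obtained in Lemma~\ref{lem:janswallow}. One direction is immediate: if $[(b,c)_F]=[(a,d)_F]$ for some $d\in\dot{F}$, then since $a$ becomes a $p$th power in $E=F(\sqrt[p]{a})$, we have $[a]_E=0$, and by the bilinearity relations \eqref{cyclicalgebras1}--\eqref{cyclicalgebras2} the class $[(a,d)_F\otimes_F E]=[(a,d)_E]$ vanishes in $\bra(E)$; hence $[(b,c)_F]\in\bra(E/F)$. The substance is the reverse inclusion.

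**For the reverse direction**, the tool I would reach for is the classical projection (or transfer/corestriction) exact sequence for a cyclic degree-$p$ extension: the kernel of $\bra(F)\to\bra(E)$ is exactly the image of the cup product with $\chi_a=\phi([a]_F)$, i.e. $\bra_p(E/F)=\{[(a,d)_F]:d\in\dot{F}\}$. Cohomologically this is the statement that the sequence
\begin{displaymath}
 \xymatrix{H^1(G,\F_p)\ar[r]^-{\cup\,\chi_a} & H^2(G,\F_p)\ar[r]^-{\res} & H^2(G_E,\F_p)}
\end{displaymath}
is exact at the middle term, where $G_E=\Gal(E(p)/E)\leq G$. So an arbitrary $[(b,c)_F]\in\bra(E/F)$, whose image under $\Lambda_2(\cup)$ lies in the kernel of restriction, is of the form $\chi_a\cup\chi_d$ for some $\chi_d\in H^1(G,\F_p)$, and translating back through the Kummer isomorphism $\phi$ of \eqref{Kummerisomorphism} gives $[(b,c)_F]=[(a,d)_F]$ with $d\in\dot{F}$.

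**Where $p$-rigidity enters**, and where I expect the main obstacle to lie, is in justifying that this exactness is available in the form needed and that the preimage $\chi_d$ really comes from an element of $\dot{F}$ rather than only from $\dot{E}$. Here Lemma~\ref{lem:janswallow} is the decisive input: it pins down $\dot{E}/\dot{E}^p$ as $\langle[\sqrt[p]{a}]_F\rangle\oplus\epsilon(\dot{F}/\dot{F}^p)$, so that the $G$-fixed part of the restriction data is controlled by $\epsilon(\dot{F}/\dot{F}^p)$, forcing $d$ into $\dot{F}$. I would combine this with the observation (from the Merkurjev--Suslin diagram above) that under $p$-rigidity the horizontal arrows are isomorphisms, so the cup product faithfully reflects the relations among symbols and no collapsing obscures the computation.

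**The hard part** is therefore assembling the exactness of the cup-product sequence with $\chi_a$ in a self-contained way, since naively this requires either a corestriction argument (Merkurjev--Suslin / Bloch--Kato with $\F_p$ coefficients) or an explicit manipulation of cyclic-algebra symbols. My plan would be to use \eqref{cyclicalgebras0}--\eqref{cyclicalgebras2} to reduce a general class $[(b,c)_F]$ to a sum of symbols $[(a,*)_F]$ by systematically absorbing, via the rigidity relations, every symbol whose entries lie in the $\epsilon$-image, using that such symbols restrict trivially to $E$ only when they are already of the desired shape; the module decomposition of Lemma~\ref{lem:janswallow} guarantees there is no extra part of $\dot{E}/\dot{E}^p$ to worry about. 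I expect the bookkeeping of this reduction — rather than any conceptual difficulty — to be the real labor.
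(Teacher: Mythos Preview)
Your proposal reaches the right conclusion, but it takes a considerably heavier route than the paper and, more importantly, misidentifies where the difficulty lies. The lemma is a \emph{general} fact about cyclic extensions of degree $p$; it does not use $p$-rigidity at all, and Lemma~\ref{lem:janswallow} plays no role in its proof. Your concern that $d$ might lie only in $\dot E$ rather than $\dot F$ is unfounded: in your own formulation $\chi_d\in H^1(G,\F_p)$, and by the Kummer isomorphism \eqref{Kummerisomorphism} this group is exactly $\dot F/\dot F^p$, so $d\in\dot F$ automatically. Consequently the entire ``bookkeeping'' program you sketch with symbol manipulations is unnecessary.

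The paper's argument is the classical one from the theory of crossed products: for a cyclic extension $E/F$ with Galois group $\bar G\cong\Z/p\Z$ one has the standard isomorphisms
\[
\bra(E/F)\;\cong\;H^2(\bar G,\dot E)\;\cong\;\dot F/N_{E/F}(\dot E),
\]
the second by periodicity of the cohomology of a finite cyclic group. Any class in $\bra(E/F)$ is therefore represented by a crossed-product algebra whose cocycle takes values in $\{1,d\}$ for some $d\in\dot F$, and this algebra is precisely $(a,d)_F$. That is the whole proof.

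Your cohomological approach via the exact sequence $H^1(G,\F_p)\xrightarrow{\cup\chi_a}H^2(G,\F_p)\xrightarrow{\res}H^2(G_E,\F_p)$ is legitimate---this is a case of the Arason/Gysin sequence for an index-$p$ subgroup---but invoking it amounts to importing a result of roughly the same strength as the lemma itself, whereas the crossed-product description is entirely elementary and self-contained.
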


\begin{proof}
Let $\bar{G}\cong\Z/p.\Z$ be the Galois group of $E/F$, and fix a primitive $p$th root $\zeta_p$.
It is well known that
\begin{equation}\label{braueriso}
\bra(E/F)\cong H^2(\bar{G},\dot{E})\quad\text{and}\quad
H^2(\bar{G},\dot{E})\cong \dot{F}/N_{E/F}(\dot{E}).
\end{equation}
Namely, by the first isomorphism of (\ref{braueriso}), every element $[A]$ of $\bra(E/F)$ can be represented by
a cross-product $F$-algebra $A$ induced by a cocycle $z\colon\bar{G}\times\bar{G}\rightarrow\dot{E}$,
and by the second isomorphism of (\ref{braueriso}), the image of $z$ is $\{1,d\}$ with $d\in\dot{F}\smallsetminus N_{E/F}(\dot{E})$,
and $A$ has a presentation such that $A=(a,d)_F$.
\end{proof}

\begin{thm}\label{thm:rigidheredrigidEF}
Let $E=F(\sqrt[p]{a})$ with $a\in\dot{F}\smallsetminus\dot{F}^p$. If $F$ is $p$-rigid, then so
is $E$.
\end{thm}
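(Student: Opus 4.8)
The plan is to verify the defining property of $p$-rigidity for $E$ directly, via cyclic algebras. Recall that $b$ is a norm from $E(\sqrt[p]{c})$ exactly when $[(c,b)_E]=1$ in $\bra(E)$, so it suffices to show that for every $c\in\dot{E}\smallsetminus\dot{E}^p$ and every $d\in\dot{E}$ the relation $[(c,d)_E]=1$ forces $[d]_E\in\langle[c]_E\rangle$, i.e. $[c]_E\wedge[d]_E=0$. (This formulation, unlike Proposition~\ref{prop:rigidiffpowerful}, needs no finiteness hypothesis.) Throughout I set $t=[\sqrt[p]{a}]_E$ and write $\epsilon\colon\dot{F}/\dot{F}^p\to\dot{E}/\dot{E}^p$ for the natural map, observing at once that $[a]_F\in\kernel(\epsilon)$ since $a=(\sqrt[p]{a})^p\in\dot{E}^p$.

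First I would invoke the decomposition $\dot{E}/\dot{E}^p=\langle t\rangle\oplus\epsilon(\dot{F}/\dot{F}^p)$ of Lemma~\ref{lem:janswallow} to write, modulo $p$th powers (which do not change the algebra class), $c=(\sqrt[p]{a})^{i}f$ and $d=(\sqrt[p]{a})^{j}g$ with $i,j\in\F_p$ and $f,g\in\dot{F}$. Expanding $[(c,d)_E]$ using the bilinearity of the symbol over $E$ (the analogues of \eqref{cyclicalgebras1} and \eqref{cyclicalgebras2}), and using that $[(\sqrt[p]{a},\sqrt[p]{a})_E]=1$ for $p$ odd because $-1\in\dot{E}^p$, one obtains
\[
[(c,d)_E]=[(\sqrt[p]{a},h)_E]\cdot\res_{E/F}[(f,g)_F],\qquad h=g^{i}f^{-j}\in\dot{F},
\]
where $\res_{E/F}[(f,g)_F]=[(f,g)_E]$ since $f,g\in\dot{F}$.

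The heart of the argument is to decouple these two factors by applying the corestriction $\operatorname{cor}_{E/F}\colon\bra(E)\to\bra(F)$. The projection formula together with $N_{E/F}(\sqrt[p]{a})=a$ (an easy computation for $p$ odd) gives $\operatorname{cor}_{E/F}[(\sqrt[p]{a},h)_E]=[(a,h)_F]$, while $\operatorname{cor}_{E/F}\res_{E/F}[(f,g)_F]=[(f,g)_F]^{p}=1$. Hence $[(c,d)_E]=1$ yields $[(a,h)_F]=1$, and the $p$-rigidity of $F$ forces $[h]_F\in\langle[a]_F\rangle$; as $a\in\dot{E}^p$ this means $h\in\dot{E}^p$, so $[(\sqrt[p]{a},h)_E]=1$ and therefore $\res_{E/F}[(f,g)_F]=1$ as well. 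Thus $[(f,g)_F]\in\bra(E/F)$, and Lemma~\ref{lem:brEF} lets me write $[(f,g)_F]=[(a,d')_F]$ for some $d'\in\dot{F}$. A second use of $p$-rigidity of $F$ — now through the isomorphism $\dot{F}/\dot{F}^p\wedge\dot{F}/\dot{F}^p\cong\bra_p(F)$ recalled in \S\ref{subsec:3.3rigidityheredrigidity} — gives $[f]_F\wedge[g]_F=[a]_F\wedge[d']_F$ in $\bigwedge^2(\dot{F}/\dot{F}^p)$.

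Finally I would push everything forward through $\epsilon$. Since $[a]_F\in\kernel(\epsilon)$, the induced map $\bigwedge^2\epsilon$ annihilates every wedge containing the factor $[a]_F$; hence $\epsilon([f]_F)\wedge\epsilon([g]_F)=\epsilon([a]_F)\wedge\epsilon([d']_F)=0$, while $\epsilon([h]_F)=0$ because $h\in\dot{E}^p$. Writing out $[c]_E\wedge[d]_E=t\wedge\epsilon([h]_F)+\epsilon([f]_F)\wedge\epsilon([g]_F)$ (using $i\epsilon([g]_F)-j\epsilon([f]_F)=\epsilon([h]_F)$), both terms vanish, so $[c]_E\wedge[d]_E=0$, which is exactly the rigidity of the element $c$ over $E$; as $c$ was arbitrary, $E$ is $p$-rigid. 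The step I expect to be the main obstacle is the corestriction computation in the third paragraph: one must set up the projection formula with the correct $\mu_p$-coefficient bookkeeping and verify $N_{E/F}(\sqrt[p]{a})=a$, since this is precisely the device that separates the $\sqrt[p]{a}$-part from the $\res_{E/F}$-part and allows the $p$-rigidity of $F$ to be applied twice.
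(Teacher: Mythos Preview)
Your proof is correct and uses essentially the same ingredients as the paper's own argument---Lemma~\ref{lem:janswallow} for the decomposition of $\dot E/\dot E^p$, the projection formula for $\operatorname{cor}_{E/F}$ together with $N_{E/F}(\sqrt[p]{a})=a$, Lemma~\ref{lem:brEF} for elements of $\bra(E/F)$, and the isomorphism $\bra_p(F)\cong\bigwedge^2(\dot F/\dot F^p)$---applied in the same way. The only difference is organizational: the paper splits into the two cases $\alpha,\beta\in\dot F$ and $\alpha=\sqrt[p]{a},\,\beta\in\dot F$, whereas you handle the general element $c=(\sqrt[p]{a})^i f$, $d=(\sqrt[p]{a})^j g$ in one pass, which is slightly more streamlined but not a different method.
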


\begin{proof}
In order to prove that $E$ is $p$-rigid, we have to show that for $\alpha,\beta\in\dot{E}$,
one has $[(\alpha,\beta)_E]=1$ in $\bra(E)$ if, and only if, $[\alpha]_E,[\beta]_E$ are
$\F_p$-linearly dependent in $\dot{E}/\dot{E}^p$.

Thus, suppose for contradiction that $[\alpha]_E,[\beta]_E$ are $\F_p$-linearly independent but $[(\alpha,\beta)_E]=1$.
By Lemma \ref{lem:janswallow}, and by (\ref{cyclicalgebras0}), (\ref{cyclicalgebras1}) and (\ref{cyclicalgebras2}), we can reduce without loss
of generality to the following to cases:
either $\alpha,\beta\in\dot{F}$, or $\alpha=\sqrt[p]{a}$ and $\beta\in\dot{F}$.

\medskip

{\it 1st case:} Assume $\alpha,\beta\in\dot{F}$.
Since $[\alpha]_E,[\beta]_E$ are $\F_p$-linearly independent, so are $[\alpha]_F,[\beta]_F$ in $\dot{F}/\dot{F}^p$.
Thus, by $p$-rigidity of $F$, $[(\alpha,\beta)_F]\neq1$ in $\bra(F)$.
Since we are assuming that
\[\left[(\alpha,\beta)_E\right]=\left[(\alpha,\beta)_F\otimes_FE\right]=1\quad\text{in }\bra(E),\]
it follows that $[(\alpha,\beta)_F]\in\bra(E/F)$.
Therefore, by Lemma \ref{lem:brEF}, there exists $b\in\dot{F}$ such that $[(\alpha,\beta)_F]=[(a,b)_F]$.
Since $\bra_p(F)\cong\dot{F}/\dot{F}^p\wedge\dot{F}/\dot{F}^p$, it follows that
\begin{eqnarray*}
&& [\alpha]_F\wedge[\beta]_F=[a]_F\wedge[b]_F \quad \text{in }\dot{F}/\dot{F}^p\wedge\dot{F}/\dot{F}^p\\
&\text{thus}& [\alpha]_E\wedge[\beta]_E=[a]_F\wedge[b]_E \quad \text{in }\dot{E}/\dot{E}^p\wedge\dot{E}/\dot{E}^p
\end{eqnarray*}
so that $[\alpha]_E$ and $[\beta]_E$ are not linearly independent, as $[a]_E=1$,
a contradiction.

\medskip

{\it 2nd case:} Assume $\alpha=\sqrt[p]{a}$ and $\beta\in\dot{F}$.
Let $C$ be the maximal pro-$p$ Galois group of $E$.
Then by \cite[Prop.~7.5.5]{GilleSzamuely} one has the following commutative diagrams
\begin{displaymath}
 \xymatrix{ \dot{F}/\dot{F}^p\ar[d]^{\phi_F}\ar[rr]^{\epsilon} && \dot{E}/\dot{E}^p\ar[d]^{\phi_E} &
 \dot{E}/\dot{E}^p\ar[rr]^{N_{E/F}}\ar[d]^{\phi_{E}} && \dot{F}/\dot{F}^p\ar[d]^{\phi_F} \\
H^1(G,\F_p)\ar[rr]^{\res_{G/C}^1} && H^1(C,\F_p) & H^1(C,\F_p)\ar[rr]^{\text{cor}_{G/C}^1} && H^1(G,\F_p)}
\end{displaymath}
where the vertical arrows are the Kummer isomorphisms,
together with the morphism \[\text{cor}_{E/F}\colon \bra_p(E)\longrightarrow\bra_p(F)\] induced by the
corestriction $\text{cor}_{G/C}^2\colon H^2(C,\F_p)\rightarrow H^2(G,\F_p)$.
Then by the projection formula \cite[Prop.~3.4.10]{GilleSzamuely} one has
\[\text{cor}_{G/C}^2\left(\phi_E(\sqrt[p]{a})\cup\res_{G/C}^1\left(\phi_F(b)\right)\right)
=\text{cor}_{G/C}^1\left(\phi_E(\sqrt[p]{a})\right)\cup\phi_F(b),\]
which implies
\[\text{cor}_{E/F}\left([(\sqrt[p]{a},\beta)_E]\right)=\left[(N_{E/F}(\sqrt[p]{a}),\beta)_F\right]=
\left[(a,\beta)_F\right].\]
Since $[(\sqrt[p]{a},\beta)_E]=1$ in $\bra(E)$, it follows that $[(a,\beta)_F]=1$ in $\bra(F)$.
Thus, by $p$-rigidity of $F$, $[a]_F,[\beta]_F$ are $\F_p$-linearly dependent in $\dot{F}/\dot{F}^p$,
i.e., $[\beta]_F=[a^k]_F$ for some $k\in\F_p$.
Therefore $[\beta]_E$ is trivial in $\dot{E}/\dot{E}^p$, a contradiction.
\end{proof}

The following fact is an elementary consequence of the solvability of finite $p$-groups.

\begin{fact}\label{lem:pext}
Let $K/F$ be a finite non-trivial $p$-extension with $K\subseteq F(p)$.
Then there exists a chain of extensions
\begin{equation}\label{chaiextensions}
 F=K_0\subset K_1\subset\ldots\subset K_{r-1}\subset K_r=K
\end{equation}
for some $r\geq1$, such that $|K_{i+1}:K_i|=p$ for every $i=0,\ldots,r-1$.
\end{fact}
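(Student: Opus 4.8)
The plan is to reduce the statement to a fact about finite $p$-groups via Galois theory. First I would pass to the Galois closure: let $L\subseteq F(p)$ be the Galois closure of $K/F$. Since $K/F$ is finite and contained in $F(p)$, the extension $L/F$ is finite and Galois, and its Galois group $P:=\Gal(L/F)$ is a finite $p$-group. Put $Q:=\Gal(L/K)\leq P$, so that $[P:Q]=|K:F|$ is a power of $p$, say $p^r$.

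The heart of the matter is then the purely group-theoretic claim that any subgroup $Q$ of a finite $p$-group $P$ fits into a chain $Q=Q_r\subsetneq Q_{r-1}\subsetneq\cdots\subsetneq Q_0=P$ with $[Q_{i-1}:Q_i]=p$ for each $i$. I would build this chain from the top down: given $Q_i\supsetneq Q$, choose $Q_{i+1}$ to be a subgroup maximal among those satisfying $Q\subseteq Q_{i+1}\subsetneq Q_i$ (such exists, the subgroup lattice being finite). Maximality forces $Q_{i+1}$ to be a maximal subgroup of $Q_i$, and maximal subgroups of a finite $p$-group have index $p$ — this is the finite analogue of the fact recalled in \S\ref{subsec:2.1pgps} that $P/\Phi(P)$ is elementary abelian, so that maximal subgroups correspond to hyperplanes in this $\F_p$-vector space. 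Iterating terminates after $r$ steps, since the successive indices multiply to $[P:Q]=p^r$.

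Finally I would translate back through the (finite) Galois correspondence for $L/F$. Setting $K_i:=L^{Q_i}$, the strict inclusions $Q_r\subsetneq\cdots\subsetneq Q_0$ reverse to give a tower $F=K_0\subset K_1\subset\cdots\subset K_r=K$, and $[K_i:K_{i-1}]=[Q_{i-1}:Q_i]=p$, which is exactly \eqref{chaiextensions}.

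I do not expect any serious obstacle; the statement is genuinely an elementary consequence of the structure of finite $p$-groups, as the paper indicates. The only points deserving care are the reduction to the Galois closure (so that the intermediate objects $K_i$ are honest subfields produced by the Galois correspondence) and the observation that the $K_i$ need not themselves be Galois over $F$ — only the successive degrees are constrained, which is all that \eqref{chaiextensions} requires.
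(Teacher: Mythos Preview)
Your proposal is correct; the paper does not give a proof of this Fact beyond the remark that it is ``an elementary consequence of the solvability of finite $p$-groups'', and your argument via the Galois closure and the index-$p$ maximal-subgroup property of finite $p$-groups is a standard way to supply those details. Your attention to the passage to the Galois closure is well placed, since the Fact (as applied in Theorem~\ref{thm:rigidheredrigid} and again in Theorem~\ref{goingdown}) is used for not-necessarily-Galois finite subextensions of $F(p)/F$.
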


This, together with Theorem \ref{thm:rigidheredrigidEF}, implies the following.

\begin{thm}\label{thm:rigidheredrigid}
 Every $p$-rigid field $F$ is also hereditary $p$-rigid.
\end{thm}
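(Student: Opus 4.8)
Every $p$-rigid field $F$ is also hereditary $p$-rigid.

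The plan is to reduce the statement to a single chain-of-extensions induction built on the degree-$p$ case already settled in Theorem~\ref{thm:rigidheredrigidEF}. By Ware's observation recalled in the introduction, to prove that $F$ is hereditary $p$-rigid it suffices to show that every \emph{finite} $p$-extension $K$ of $F$ with $K\subseteq F(p)$ is itself $p$-rigid; so I would fix such a $K$ and aim to verify $p$-rigidity for it. Note that a $p$-rigidity assertion over $K$ only involves finitely many elements of $\dot{K}$ at a time, which is exactly why Ware's passage to finite subextensions is legitimate.

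First I would apply Fact~\ref{lem:pext} to obtain a tower
\[F=K_0\subset K_1\subset\ldots\subset K_r=K,\qquad |K_{i+1}:K_i|=p.\]
Next I would argue that each step $K_{i+1}/K_i$ is in fact a cyclic Kummer extension of the form $K_{i+1}=K_i(\sqrt[p]{a_i})$ with $a_i\in\dot{K_i}\smallsetminus\dot{K_i}^p$. Since $K\subseteq F(p)$, the group $\Gal(F(p)/K_i)$ is a pro-$p$ group and $\Gal(F(p)/K_{i+1})$ is a subgroup of index $p$ in it, hence automatically normal (an index-$p$ subgroup of a pro-$p$ group is maximal, therefore normal). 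Thus $K_{i+1}/K_i$ is cyclic of degree $p$, and because every field in the tower contains $\mu_p$ (inherited from $F$), Kummer theory forces $K_{i+1}=K_i(\sqrt[p]{a_i})$.

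With the tower in Kummer form, I would run the induction on $i$. The base case is that $K_0=F$ is $p$-rigid by hypothesis. For the inductive step, if $K_i$ is $p$-rigid, then $K_i$ contains $\mu_p$ and $K_{i+1}=K_i(\sqrt[p]{a_i})$ is a degree-$p$ radical extension, so Theorem~\ref{thm:rigidheredrigidEF} applied with base field $K_i$ in place of $F$ yields that $K_{i+1}$ is $p$-rigid. After $r$ steps we conclude that $K=K_r$ is $p$-rigid, and since $K$ was an arbitrary finite subextension of $F(p)/F$, the field $F$ is hereditary $p$-rigid.

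The only genuinely delicate point is the middle paragraph: one must be certain that each one-step extension in the tower really is a cyclic degree-$p$ Kummer extension, so that Theorem~\ref{thm:rigidheredrigidEF} is applicable at every stage rather than only at the first. The normality of index-$p$ subgroups in a pro-$p$ group handles this cleanly, and the presence of $\mu_p$ in each $K_i$ converts cyclicity into the required radical form. Everything else is a routine induction that merely propagates the single-step result up the tower.
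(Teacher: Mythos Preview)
Your proof is correct and follows exactly the same route as the paper: invoke Fact~\ref{lem:pext} to obtain a tower of degree-$p$ steps and then apply Theorem~\ref{thm:rigidheredrigidEF} inductively along the tower. The paper's own proof is just the one-line remark that Fact~\ref{lem:pext} together with Theorem~\ref{thm:rigidheredrigidEF} yields the result; your version simply makes explicit the (necessary) verification that each step is a cyclic Kummer extension so that Theorem~\ref{thm:rigidheredrigidEF} applies at every stage.
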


As mentioned in the Introduction, the above theorem was proved in a different way by Engler and Koenigsmann in
\cite[Prop.~2.2]{englerkoen}

\subsection{Galois theoretical and cohomological  characterizations for $p$-rigid fields}\label{subsec:3.4Galcharac}

In addition to Theorem \ref{thm:rigidheredrigid}, and earlier results in this section, we have the following implications.

  $F$ is $p$-rigid $\implies$  $F$ is heriditary $p$-rigid $\implies$
$G:= G_F(p)$ is locally powerful $\implies$ $G$ is $\theta$ abelian
$\implies$  $G$ has presentation as in Proposition \ref{prop:presentationthetabelian}. So in other words, we obtain the next Corollary.

\begin{cor}\label{corB}
The field $F$ is rigid if and only if there exists an orientation $\theta\colon G_F(p)\rightarrow \Z_p^\times$
such that $G_F(p)$ is $\theta$-abelian, so that $G$ has a presentation
\begin{equation}\label{eq:presentationincoro}
 G_F(p)=\left\langle\sigma,\rho_i,i\in\mathcal{I}\left|
\left[\sigma,\rho_i\right]=\rho_i^\lambda,\left[\rho_i,\rho_j\right]=1\;\forall\:i,j\in\mathcal{I}\right.\right\rangle
\end{equation}
for some set of indices $\mathcal{I}$ and $\lambda\in p.\Z_p$ such that $1+\lambda=\theta(\sigma)$.
\end{cor}

In fact if $F$ is $p$-rigid, then the suitable orientation for $G_F(p)$ is the cyclotomic character
of $F$ -- as one would expect, and as we shall see this again explicitly in Section ~\ref{4.2}.

Also, in Section \ref{4.2}, we will obtain together with the results of previous section, self-contained field theoretic proof of this corollary. (Alternatively, one can also deduce this corollary using valuation theory in \cite{englerkoen} .)

\begin{defi}
We say that pro-$p$ group $G$ is solvable if it admits a normal series of closed subgroups such that
each successive quotient is abelian. That is, we have a sequence of closed subgroups
\[1 = G_0 \le G_1 \le G_2 \le \cdots \le G_{k-1} \le G_k = G\]
such that each $G_j$ is closed and normal in $G_{j+1}$ and $G_{j+1}/G_j$ is abelian for all $j$.
\end{defi}

\begin{cor}\label{corC}
The field $F$ is $p$-rigid if and only if the maximal pro-$p$ Galois group $G_F(p)$ is solvable.
\end{cor}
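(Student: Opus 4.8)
The plan is to obtain both implications from Corollary~\ref{corB}, which already identifies $p$-rigidity of $F$ with the $\theta$-abelianness of $G:=G_F(p)$. Since a $\theta$-abelian pro-$p$ group is metabelian (an extension of an abelian group by $\Z_p$), one direction is immediate, and the whole content of the statement lies in the converse: solvability of $G$ must be shown to force $G$ to be $\theta$-abelian.

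For the forward implication, suppose $F$ is $p$-rigid. By Corollary~\ref{corB} there is an orientation $\theta$ making $G$ $\theta$-abelian, so by Proposition~\ref{prop:presentationthetabelian} we have $G\cong\Z_p\ltimes Z$ with $Z=\kernel(\theta)\cong\Z_p^{\mathcal I}$ abelian. Then $1\leq Z\leq G$ is a normal series with abelian factors $Z$ and $G/Z\cong\Z_p$, so $G$ is solvable.

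For the converse, assume $G$ is solvable. I would reduce the problem to proving that $G$ is locally powerful; once that is established, torsion-freeness of $G_F(p)$ for $p$ odd (Becker's theorem, \cite{beckerAS}) together with Proposition~\ref{prop:thetabelianifflocally-infinite} yields that $G$ is $\theta$-abelian, whence $F$ is $p$-rigid by Corollary~\ref{corB}. To prove local powerfulness, let $C\leq G$ be an arbitrary finitely generated closed subgroup. As the derived series of $C$ is contained in that of $G$, $C$ inherits solvability. Moreover $C$ is a finitely generated Bloch--Kato pro-$p$ group, since it is a finitely generated closed subgroup of the Bloch--Kato group $G$ (equivalently, by the Galois correspondence $C=G_E(p)$ for the intermediate field $E=F(p)^{C}\supseteq F\supseteq\mu_p$, with $d(C)=\dim_{\F_p}(\dot E/\dot E^p)<\infty$, exactly the setting of \S\ref{subsec:3.2G_Fforrigid}). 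Hence \cite[Thm.~B]{claudio} applies to $C$: either $C$ is locally powerful, or $C$ contains a closed non-abelian free pro-$p$ subgroup. The second alternative is incompatible with solvability, so $C$ is locally powerful and, being finitely generated, powerful. As $C$ was arbitrary, $G$ is locally powerful.

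The one step demanding genuine care is ruling out the free alternative, i.e.\ checking that a non-abelian free pro-$p$ group cannot be a closed subgroup of a solvable pro-$p$ group. I would argue directly: in a solvable pro-$p$ group of derived length $\ell$ every closed subgroup has trivial $\ell$-th derived subgroup, whereas the derived series of a non-abelian free pro-$p$ group never terminates, each derived subgroup being again a non-abelian free pro-$p$ group of infinite rank. This incompatibility is the crux; the remaining steps merely chain together Corollary~\ref{corB}, Proposition~\ref{prop:thetabelianifflocally-infinite}, and \cite[Thm.~B]{claudio}.
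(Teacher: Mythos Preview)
Your proof is correct and follows essentially the same strategy as the paper: Corollary~\ref{corB} gives metabelianity for the forward direction, and for the converse both you and the paper use \cite[Thm.~B]{claudio} together with the observation that a solvable pro-$p$ group cannot contain a closed non-abelian free pro-$p$ subgroup. The only difference is that the paper applies \cite[Thm.~B]{claudio} directly to $G_F(p)$ to conclude $\theta$-abelianness in one stroke, whereas you take the detour of applying it to each finitely generated closed subgroup, assembling local powerfulness, and then invoking Proposition~\ref{prop:thetabelianifflocally-infinite}; this extra step is unnecessary but harmless.
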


\begin{proof}
 If $F$ is $p$-rigid, then by Corollary~\ref{corB} $G_F(p)$ has a presentation as in (\ref{eq:presentationincoro}),
so that $G_F(p)$ is meta-abelian (i.e., its commutator is abelian), and thus solvable; in fact, the desired normal series is
\[\ 1 = [[G, G], [G, G]] \le [G, G] \le G\]
where $G= G_F(p)$.
Conversely, if $G_F(p)$ is solvable, than it contains no closed non-abelian free pro-$p$ subgroups.
Hence, by \cite[Thm.~B]{claudio}, $G_F(p)$ is $\theta$-abelian for some orientation $\theta$,
and by Corollary~\ref{corB}, $F$ is $p$-rigid.
\end{proof}

As mentioned in the Introduction, Corollary~\ref{corB} can be deduced also using valuations techniques,
as in \cite[\S~1]{englerkoen} and \cite[Ex. 22.1.6]{Ktheoryefrat},
whereas Corollary~\ref{corC} is the double implication (ii)$\Leftrightarrow$(vi)
in \cite[Prop.~2.2]{englerkoen}.

\begin{cor}\label{corD}
The field $F$ is $p$-rigid if and only if
\begin{equation}\label{cohomologyexterior}
 H^\bullet\left(G_F(p),\F_p\right)\cong\bigwedge_{k=1}^{d(G_F(p))}\left(H^1(G_F(p),\F_p)\right).
\end{equation}
\end{cor}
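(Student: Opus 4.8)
The plan is to prove Corollary~\ref{corD} by leveraging the cohomological reformulation of $p$-rigidity established earlier together with the explicit group-theoretic description of $G_F(p)$ from Corollary~\ref{corB}. Recall that, by the discussion surrounding \eqref{wedgecup}, $F$ is $p$-rigid if and only if the cup-product map $\Lambda_2(\cup)\colon H^1(G,\F_p)\wedge H^1(G,\F_p)\to H^2(G,\F_p)$ is injective, where $G=G_F(p)$. The right-hand side of \eqref{cohomologyexterior} is the exterior $\F_p$-algebra on the vector space $V:=H^1(G,\F_p)$, whose degree-$2$ component is precisely $V\wedge V$. So the content of the corollary is that injectivity of $\Lambda_2(\cup)$ in a single degree forces the \emph{whole} cohomology algebra to be the exterior algebra on $V$.

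For the forward direction ($F$ $p$-rigid $\Rightarrow$ \eqref{cohomologyexterior}), I would first use Corollary~\ref{corB} to replace the field-theoretic hypothesis by the concrete presentation \eqref{eq:presentationincoro}: $G$ is $\theta$-abelian, i.e.\ $G\cong\Z_p\ltimes Z$ with $Z\cong\Z_p^{\mathcal I}$ abelian and $\Z_p$ acting by multiplication by $\theta(\sigma)=1+\lambda$. The strategy is then to compute $H^\bullet(G,\F_p)$ directly from this split extension. Since the action of $G$ on the coefficients $\F_p$ is trivial and $Z$ acts trivially on itself, the Lyndon--Hochschild--Serre spectral sequence for $1\to Z\to G\to \Z_p\to 1$ has $E_2^{r,s}=H^r(\Z_p,\F_p)\otimes H^s(Z,\F_p)$. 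Here $H^\bullet(Z,\F_p)=\bigwedge(Z^\vee)$ is an exterior algebra (as $Z$ is a free abelian pro-$p$ group of possibly infinite rank, its mod-$p$ cohomology is exterior on $H^1$, because $p$ is odd and $Z$ is torsion-free), and $H^\bullet(\Z_p,\F_p)=\bigwedge(\F_p)$ is exterior on a single generator. The key computation is that the spectral sequence degenerates: the only possibly nonzero differential $d_2$ must vanish because the action of $\Z_p$ on $H^\bullet(Z,\F_p)$ is trivial mod $p$ (as $\theta(\sigma)\equiv 1 \pmod p$, since $\lambda\in p\Z_p$), so the extension is ``cohomologically abelian'' in the mod-$p$ sense. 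Then $H^\bullet(G,\F_p)\cong H^\bullet(\Z_p,\F_p)\otimes H^\bullet(Z,\F_p)$ as algebras, which is exactly the exterior algebra on $V=H^1(G,\F_p)$, giving \eqref{cohomologyexterior}.

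For the converse, the argument is almost immediate: if \eqref{cohomologyexterior} holds, then $H^2(G,\F_p)=V\wedge V$ with the isomorphism \emph{induced by the cup product}, which is precisely the statement that $\Lambda_2(\cup)$ is an isomorphism, hence injective; by the characterization via \eqref{wedgecup} this forces $F$ to be $p$-rigid. I would state this direction in one or two sentences, noting that the exterior-algebra structure encodes injectivity of $\Lambda_2(\cup)$ by definition of the multiplication in $\bigwedge V$.

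The main obstacle I anticipate is the degeneration of the spectral sequence and the passage from the graded vector-space identification to an \emph{algebra} isomorphism, especially in the infinitely generated case where one cannot simply invoke a finite-rank uniform-group computation. One must be careful that the exterior-algebra structure on $H^\bullet(Z,\F_p)$ genuinely holds for an arbitrary-rank $\Z_p^{\mathcal I}$ and that the cup products between the $\Z_p$-direction class and the $Z$-direction classes behave as in an exterior algebra (anti-commutativity is automatic for odd $p$, but one must verify there are no extra relations collapsing products to zero, i.e.\ that $\Lambda_2(\cup)$ is genuinely injective and not merely that the Poincaré series match). An efficient alternative that sidesteps spectral-sequence bookkeeping is to observe that $\theta$-abelianness makes $G$ locally powerful and torsion-free (Proposition~\ref{prop:thetabelianifflocally-infinite}), reduce to finitely generated uniform subgroups where Theorem~\ref{thmsymondsthomas} and the known cohomology of uniform pro-$p$ groups give the exterior algebra, and then pass to the inverse limit; I would flag the compatibility of these computations across the directed system as the point requiring genuine care.
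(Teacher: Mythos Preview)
Your converse direction matches the paper's exactly. For the forward direction, however, you take a genuinely different and considerably heavier route than the paper.

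The paper's argument for $F$ $p$-rigid $\Rightarrow$ \eqref{cohomologyexterior} does not touch Corollary~\ref{corB} or any spectral sequence. It simply invokes the Bloch--Kato property of $G_F(p)$: the cohomology ring is \emph{quadratic}, i.e.\ generated in degree~1 with relations in degree~2. Since $p$ is odd, the cup product on $H^1$ is alternating, so $V\otimes V\to H^2$ factors through $V\wedge V$; quadraticity makes this surjective, and $p$-rigidity makes $\Lambda_2(\cup)$ injective. Hence the degree-2 relations are exactly the symmetric ones, and a quadratic algebra on $V$ with precisely those relations \emph{is} $\bigwedge V$. That is the whole proof. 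The paper even remarks afterward that Ware's earlier argument used the Hochschild--Serre spectral sequence and extra hypotheses, which the Bloch--Kato route eliminates.

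Your spectral-sequence approach can be made to work, but two points deserve correction. First, the reason the LHS spectral sequence for $1\to Z\to G\to\Z_p\to1$ degenerates is not ``the action is trivial mod $p$''; it is simply that $\cohdim(\Z_p)=1$, so $E_2^{r,s}=0$ for $r\geq2$ and every $d_r$ ($r\geq2$) has zero source or target. Triviality of the action is what identifies $E_2$ with a tensor product, but it does not by itself kill differentials. Second, as you yourself flag, degeneration only gives an isomorphism of the associated graded, and extracting the multiplicative statement $H^\bullet(G,\F_p)\cong\bigwedge V$ still needs an argument (e.g.\ that the edge maps are ring maps and the extension splits multiplicatively). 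All of this is bypassed by the paper's two-line use of quadraticity; you should replace your forward direction with that.
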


\begin{proof}
 Recall that $G_F(p)$ is a Bloch-Kato pro-$p$ group, so that the whole $\F_p$-cohomology ring $H^\bullet\left(G_F(p),\F_p\right)$
depends on $H^1(G_F(p),\F_p)$ and $H^2(G_F(p),\F_p)$. That is, $H^\bullet\left(G_F(p),\F_p\right)$ has generators in degree 1 and relations in degree 2.

If the isomorphism (\ref{cohomologyexterior}) holds then in particular the morphism (\ref{wedgecup}) is injective,
and $F$ is $p$-rigid.
Conversely, if $F$ is $p$-rigid then the morphism (\ref{wedgecup}) is an isomorphism, and
\begin{displaymath}
 \xymatrix{H^1(G_F(p),\F_p)\wedge H^1(G_F(p),\F_p)\ar[r]^-{\sim} & H^2(G_F(p),\F_p).}
\end{displaymath}
Therefore the whole $\F_p$-cohomology ring is isomorphic to the exterior algebra
generated by $H^1(G_F(p),\F_p)$.
\end{proof}

Ware proved the same result in the case $\dim(\dot{F}/\dot{F}^p)<\infty$, but with the further assumptions that
$F$ is hereditary $p$-rigid and that it contains a primitive $p^2$th root of unity \cite[Thm.~4 and Corollary]{ware}.
Moreover, his proof requires computations involving the Hochschild-Serre spectral sequence.

\begin{rem}
 Clearly Corollaries~\ref{corB} and \ref{corD} hold also for every $p$-extension $K/F$.
\end{rem}

\begin{cor}\label{corE}
Given a field $F$, assume that $\dim(\dot{F}/\dot{F}^p)=d<\infty$.
Then $F$ is $p$-rigid if and only if $\cohdim(G_F(p))=d$.
\end{cor}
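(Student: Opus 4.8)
The plan is to transfer the whole question to the cohomology ring and read off the cohomological dimension from its exterior-algebra structure. Write $G=G_F(p)$ and $V=H^1(G,\F_p)$; by (\ref{duality}) and the Kummer isomorphism (\ref{Kummerisomorphism}) one has $\dim_{\F_p}V=d(G)=\dim_{\F_p}(\dot F/\dot F^p)=d$, so $G$ is finitely generated. Since $p$ is odd, graded-commutativity forces $\chi\cup\chi=0$ for every $\chi\in V$, so the cup product extends to a homomorphism of graded $\F_p$-algebras
\[
\psi\colon \textstyle\bigwedge^\bullet V\longrightarrow H^\bullet(G,\F_p),
\]
whose degree-$k$ component is $\Lambda_k(\cup)$ and whose degree-$2$ component is the map (\ref{wedgecup}). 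Because $G$ is a Bloch-Kato pro-$p$ group, $H^\bullet(G,\F_p)$ is generated in degree $1$, so $\psi$ is surjective; writing $I=\kernel(\psi)$ for its graded kernel ideal, I get $H^k(G,\F_p)\cong\bigwedge^k V/I_k$ for all $k$, with $I_2=\kernel(\Lambda_2(\cup))$. Recall from the discussion around (\ref{wedgecup}) that $F$ is $p$-rigid precisely when $\Lambda_2(\cup)$ is injective, i.e.\ when $I_2=0$.

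First I would dispatch the easy direction. If $F$ is $p$-rigid then $I_2=0$, and since $I$ is generated in degree $2$ by Bloch-Kato quadraticity, $I=0$ and $\psi$ is an isomorphism (this is exactly Corollary~\ref{corD}). Hence $H^k(G,\F_p)\cong\bigwedge^k V$ for all $k$, so $H^d(G,\F_p)\neq0$ while $H^{d+1}(G,\F_p)\cong\bigwedge^{d+1}V=0$, giving $\cohdim(G)=d$.

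For the converse I would exploit the Poincar\'e-duality structure of the exterior algebra. Note first that surjectivity of $\psi$ together with $\bigwedge^{d+1}V=0$ already forces $H^{d+1}(G,\F_p)=0$, so in general $\cohdim(G)\leq d$, with equality if and only if $H^d(G,\F_p)\neq0$. Assume $\cohdim(G)=d$; then $H^d(G,\F_p)\neq0$, and since $\dim_{\F_p}\bigwedge^d V=1$ the quotient $\bigwedge^d V/I_d$ being nonzero forces $I_d=0$. Suppose, for contradiction, that $F$ is not $p$-rigid, so there is $0\neq\omega\in I_2$. As $I$ is an ideal, $\omega\wedge\bigwedge^{d-2}V\subseteq I_d=0$. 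But the pairing
\[
\textstyle\bigwedge^2 V\times\bigwedge^{d-2}V\longrightarrow\bigwedge^d V,\qquad (\omega,\eta)\longmapsto\omega\wedge\eta,
\]
is non-degenerate, so $\omega\neq0$ yields some $\eta$ with $\omega\wedge\eta\neq0$ --- a contradiction. Therefore $I_2=0$, i.e.\ $\Lambda_2(\cup)$ is injective and $F$ is $p$-rigid.

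The main obstacle is the converse: the point is not any single computation but recognizing that a relation in degree $2$ cannot become invisible at the top degree $d$. Two ingredients make this work. Bloch-Kato supplies the surjection $\psi$ (so that $H^d\neq0$ can be read off from $I_d$) and, in the forward direction, quadraticity; and the non-degeneracy of the exterior pairing is precisely what propagates a nonzero degree-$2$ relation all the way up to $\bigwedge^d V$. I would make sure to state explicitly that $G$ is finitely generated (from $d<\infty$) so that $\cohdim(G)\leq d$ is available, since the entire argument rests on the top exterior power being one-dimensional.
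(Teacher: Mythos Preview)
Your proof is correct and follows essentially the same approach as the paper's: both directions hinge on the Bloch--Kato surjection $\psi\colon\bigwedge^\bullet V\to H^\bullet(G,\F_p)$, with the forward direction being Corollary~\ref{corD} and the converse being the observation that a nonzero degree-$2$ relation must propagate to kill the one-dimensional top exterior power. The paper compresses the converse into one sentence (``a non-trivial relation \dots\ would imply $H^d=\spa\{\chi_1\cup\cdots\cup\chi_d\}=0$'') and cites \cite[Prop.~4.3]{claudio} for details, whereas you spell out the non-degeneracy of the pairing $\bigwedge^2 V\times\bigwedge^{d-2}V\to\bigwedge^d V$ explicitly; your added remark that $\cohdim(G)\leq d$ holds unconditionally is a nice by-product the paper does not state.
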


\begin{proof}
 If $F$ is $p$-rigid, then $\cohdim(G_F(p))=d(G_F(p))$ by Corollary~\ref{corD}, and $d(G_F(p))=d$ by $(\ref{duality})$.

Conversely, if $\cohdim(G_F(p))=d(G)=d$, then one has the isomorphism (\ref{cohomologyexterior}),
since a non-trivial relation in $H^1(G_F(p),\F_p)\wedge H^1(G_F(p),\F_p)$ would imply that
\[H^d\left(G_F(p),\F_p\right)=\spa_{\F_p}\{\chi_1\cup\cdots\cup\chi_d\}=0,\]
with $\{\chi_1,\ldots,\chi_d\}$ a basis for $H^1(G_F(p),\F_p)$, a contradiction
(see \cite[Prop.~4.3]{claudio} for more details).
\end{proof}

\begin{cor}\label{corF}
Given a field $F$, assume that $\dim(\dot{F}/\dot{F}^p)=d<\infty$.
Then $F$ is $p$-rigid if, and only if, $\dim(\dot{K}/\dot{K}^p)=d$ for every finite $p$-extension $K/F$.
\end{cor}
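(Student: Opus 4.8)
The plan is to restate the claim in terms of the maximal pro-$p$ Galois group $G:=G_F(p)$ and then extract it from the structure theory obtained above. I would set $d=d(G)$, which by (\ref{duality}) and the Kummer isomorphism (\ref{Kummerisomorphism}) equals $\dim_{\F_p}(\dot{F}/\dot{F}^p)$. First I would observe that for any finite $p$-extension $K/F$ one has $K\subseteq F(p)$, and since $F(p)$ admits no proper $p$-extension, $K(p)=F(p)$; hence $G_K(p)=\Gal(F(p)/K)$ is an \emph{open} subgroup $C\leq G$, and the same two identities give $\dim_{\F_p}(\dot{K}/\dot{K}^p)=d(C)$. As every open subgroup of $G$ arises in this way via the Galois correspondence, the corollary becomes the purely group-theoretic assertion: $F$ is $p$-rigid if and only if $d(C)=d$ for every open subgroup $C\leq G$.

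For the forward implication I would start from Corollary~\ref{corB}: if $F$ is $p$-rigid then $G$ is $\theta$-abelian, so $G\cong\Z_p\ltimes\Z_p^{\mathcal I}$ with $|\mathcal I|=d-1$, a uniform pro-$p$ group of analytic dimension $d$. Since the class of torsion-free locally powerful groups coincides with that of $\theta$-abelian groups (Theorem~\ref{thm:thetabelianifflocally} and Proposition~\ref{prop:thetabelianifflocally-infinite}) and is evidently closed under passing to open subgroups, and since this passage preserves the analytic dimension, each open $C\leq G$ is again $\theta$-abelian of dimension $d$; hence $d(C)=d$. A fully cohomological variant is also available: by Corollary~\ref{corE} one has $\cohdim(G)=d$, and as $p$ is odd $G$ is torsion-free, so Serre's theorem on open subgroups gives $\cohdim(C)=d$; since $F$ is hereditary $p$-rigid (Theorem~\ref{thm:rigidheredrigid}), the fixed field of $C$ is itself $p$-rigid, and Corollary~\ref{corE} applied to it yields $d(C)=\cohdim(C)=d$.

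For the converse I would argue by contraposition, and here I expect the real work to lie. Assuming $F$ is not $p$-rigid, Proposition~\ref{prop:rigidiffpowerful} shows that $G$ is not locally powerful, so by \cite[Thm.~B]{claudio} it contains a closed non-abelian free pro-$p$ subgroup $S$. Being free of rank at least $2$, $S$ has open subgroups with arbitrarily many generators, so $\rank(S)=\infty$; as closed subgroups of $S$ are closed subgroups of $G$, this forces $\rank(G)=\infty$. Invoking the identity $\rank(G)=\sup\{d(C)\mid C\leq G\text{ open}\}$ from Subsection~\ref{subsec:2.1pgps}, I obtain an open subgroup $C$ with $d(C)>d$, i.e.\ a finite $p$-extension $K/F$ with $\dim_{\F_p}(\dot{K}/\dot{K}^p)\neq d$, as required. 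The crux is exactly this last maneuver: turning the mere existence of a free non-abelian subgroup into an \emph{open} subgroup with strictly more generators, which is precisely the content of the finiteness criterion for the rank (equivalently, of $p$-adic analyticity).
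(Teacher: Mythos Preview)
Your proposal is correct and follows essentially the same route as the paper. The converse direction is identical up to contraposition: both arguments pivot on \cite[Thm.~B]{claudio} and the characterization of $\rank(G)$ via open subgroups to produce an open $C$ with $d(C)>d$ when $F$ is not $p$-rigid. For the forward direction the paper is slightly more economical: once $F$ is $p$-rigid it observes that $G$ is uniformly powerful and simply invokes \cite[Prop.~4.4]{analytic}, which gives $d(C)=d(G)$ for every open $C\leq G$ directly, bypassing both the explicit $\theta$-abelian structure and the cohomological detour through Serre's theorem and Corollary~\ref{corE} that you outline.
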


\begin{proof}
 Suppose that $\dim(\dot{K}/\dot{K}^p)=d$ for every finite $p$-extension $K/F$.
By (\ref{duality}) and (\ref{Kummerisomorphism}), this implies that $d(C)=d$ for every open subgroup $C\leq G$.
Therefore the rank of $G$ is finite, and $G$ contains no closed non-abelian free pro-$p$ subgroups.
Thus, by \cite[Thm.~B]{claudio}, $G_F(p)$ is powerful, and $F$ is $p$-rigid; also see \cite{idojan3}.

Conversely, if $F$ is $p$-rigid, then $G_F(p)$ is uniformly powerful (and finitely generated by hypothesis),
and by \cite[Prop.~4.4]{analytic} one has $d(C)=d(G_F(p))$ for every open subgroup $C\leq G_F(p)$, i.e.,
$\dim(\dot{K}/\dot{K}^p)=d$ for every finite $p$-extension $K/F$.
\end{proof}

\begin{rem}
 A pro-$p$ group $G$ has {\it constant generating number on open subgroups} if
\begin{equation}\label{iwasawa}
 d(C)=d(G)\quad\text{for all open subgroups }C\leq G.
\end{equation}
By Corollary~\ref{corF}, a maximal pro-$p$ Galois group has property (\ref{iwasawa}) if, and only if, $F$ is $p$-rigid.
The problem to classify all profinite groups with property (\ref{iwasawa}) was raised by K. Iwasawa (see \cite[\S 1]{klosno}).
Thus, Corollary~\ref{corF} classifies all such groups in the category of maximal pro-$p$ Galois groups (and hence also in
the category of pro-$p$ absolute Galois group).
Actually, \cite[Thms. A and B]{claudio} gives implicitly the same classification for the wider category of Bloch-Kato
pro-$p$ groups.

A similar classification has been proven in \cite{klosno} for the category of $p$-adic analytic pro-$p$ groups.
It is interesting to remark that the groups listed in \cite[Thm.~1.1.(1)-(2)]{klosno} have a presentation as in
(\ref{eq:presentationincoro}),
whereas the groups listed in \cite[Thm.~1.1.(3)]{klosno} cannot be realized as maximal pro-$p$ Galois groups,
for they have non-trivial torsion.
\end{rem}

\section{New characterization for $p$-rigid fields}\label{sec:4proof}
\subsection{Proof of Theorem A}\label{subsec:4.1proof}

Let $G$ be the maximal pro-$p$ Galois group $G_F(p)$, and
recall from \S \ref{subsec:2.3F3} the definition of the modules $J_n$.

Moreover, let $ G^{\{n\}}$ and $G^{(n)}$ denote the maximal pro-$p$ Galois group of $F^{\{n\}}$, resp. of $F^{(n)}$.
Then it is clear that
\begin{eqnarray*}
 && \Gal\left(F^{\{n+1\}}/F^{\{n\}}\right)=\frac{G^{\{n\}}}{\Phi\left(G^{\{n\}}\right)},\\
 &\text{and}& G^{\{n+1\}}=\Phi\left(G^{\{n\}}\right)=\left(G^{\{n\}}\right)^p\left[G^{\{n\}},G^{\{n\}}\right],
\end{eqnarray*}
whereas by (\ref{duality}) and (\ref{Kummerisomorphism}) one has
$J_n\cong H^1(G^{(n)},\F_p)=(G^{(n)})^{\vee}$, so that
\[(J_n)^G\cong H^1\left(G^{(n)},\F_p\right)^G=\left(\frac{G^{(n)}}{[G,G^{(n)}]}\right)^{\vee},\]
i.e., the $G$-invariant elements of $J_n$ are dual to the $G$-coinvariant elements of $G^{(n)}$,
which implies that
\begin{eqnarray*}
 && \Gal\left(F^{(n+1)}/F^{\{n\}}\right)=\frac{G^{(n)}}{\left(G^{(n)}\right)^p\left[G,G^{(n)}\right]},\\
 &\text{and}& G^{(n+1)}=\Phi\left(G^{(n)}\right)\left[G,G^{(n)}\right]=\left(G^{(n)}\right)^p\left[G,G^{(n)}\right]=\lambda_{n+1}(G).
\end{eqnarray*}

\begin{rem}\label{rem:J}
 \begin{itemize}
 \item[i.] The $G$-module $J$ can be defined in a purely cohomological manner
without involving the field $F$, since by Kummer theory one has the isomorphism
\[J\cong H^1(\Phi(G),\F_p)\]
as $G$-modules.
\item[ii.] Theorem~A can be stated in the following way: $F$ is rigid if, and only if,
$G^{\{n\}}=G^{(n)}$ for all $n\geq2$ or, equivalently, if, and only if, $J_2^G=J_2$.
 \end{itemize}
\end{rem}

\begin{prop}\label{prop:rigitimpliesF3}
 If $F$ is a $p$-rigid field then $F^{\{n\}}=F^{(n)}$ for all $n\geq2$.
\end{prop}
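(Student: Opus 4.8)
The plan is to translate everything into the group $G = G_F(p)$ and prove the equivalent statement $\Phi^{n-1}(G) = \lambda_n(G)$ for all $n \geq 2$, where $\Phi^{n-1}$ denotes the $(n-1)$-fold iterate of the Frattini subgroup. Indeed, by the Galois correspondence and the identifications recorded just before the statement, $G^{\{n\}} = \Phi(G^{\{n-1\}}) = \Phi^{n-1}(G)$ and $G^{(n)} = \lambda_n(G)$, so that $F^{\{n\}} = F^{(n)}$ is equivalent to $\Phi^{n-1}(G) = \lambda_n(G)$. The inclusion $F^{(n)} \subseteq F^{\{n\}}$ (equivalently $\Phi^{n-1}(G) \subseteq \lambda_n(G)$) is immediate, since $F^{(n)}$ is the compositum of a subfamily of the extensions defining $F^{\{n\}}$; so only the reverse inclusion requires $p$-rigidity, and I would prove it by induction on $n$.

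For the induction, the base case $n = 2$ is the equality $G^{\{2\}} = G^{(2)} = \Phi(G)$. For the inductive step, suppose $H := G^{(n)} = G^{\{n\}}$. Then $G^{\{n+1\}} = \Phi(H) = H^p[H,H]$ while $G^{(n+1)} = H^p[G,H]$; since $[H,H] \subseteq [G,H]$ this already re-proves $G^{\{n+1\}} \subseteq G^{(n+1)}$, and the desired equality reduces to the single containment
\[
[G, G^{(n)}] \subseteq \Phi\bigl(G^{(n)}\bigr).
\]
Equivalently, by Remark \ref{rem:J}, this says that $G$ acts trivially on $J_n$, i.e. $J_n^G = J_n$. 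This reformulation is the heart of the matter.

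To establish it I would invoke Corollary \ref{corB}: since $F$ is $p$-rigid, $G$ is $\theta$-abelian, so $G \cong \Z_p \ltimes Z$ with $Z \cong \Z_p^{\mathcal I} = \kernel(\theta)$ and $\sigma$ (a generator of the $\Z_p$-factor) acting on $Z$ by multiplication by $q := \theta(\sigma) = 1 + \lambda$, where $\lambda \in p\Z_p$. Writing $e := v_p(\lambda) \geq 1$ and computing $p$-th powers in the semidirect product, one finds that $(\sigma^a z)^{p^k}$ has $Z$-component equal to $\bigl(\sum_{j} q^{-ja}\bigr)z$, a factor of $p$-adic valuation exactly $k$ (using $v_p(q^{p^k a} - 1) = k + v_p(a) + e$, valid for $p$ odd and $e \geq 1$). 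This identifies $\lambda_n(G) = G^{p^{n-1}} = p^{n-1}\Z_p \ltimes p^{n-1}Z = \Phi^{n-1}(G)$, and in particular, for $H = \lambda_n(G)$, the subgroup $\Phi(H) = H^p$ has $Z$-part $p^n Z$. On the other hand, every commutator of an element of $G$ with an element of $H$ lies in $Z$ with $Z$-component of valuation at least $n-1+e$ — as one checks on the generating commutators $[\sigma, z^{p^{n-1}}] = z^{\lambda p^{n-1}}$ and $[\sigma^{p^{n-1}}, z] = z^{q^{p^{n-1}}-1}$ — so that $[G,H] \subseteq p^{\,n-1+e}Z \subseteq p^n Z \subseteq \Phi(H)$, precisely because $e \geq 1$. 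This closes the induction.

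The main obstacle is the explicit control of the two filtrations, namely verifying $\lambda_n(G) = \Phi^{n-1}(G) = G^{p^{n-1}}$ through the valuation identity for the geometric sums $\sum_j q^{-ja}$; one cannot simply quote the finitely generated powerful-group machinery of \cite{analytic}, since $\mathcal I$ may be infinite, so the computation must be carried out directly in $\Z_p \ltimes Z$. The conceptual point, however, is clean: the hypothesis $\lambda \in p\Z_p$ forces every $G$-commutator of an element of $\lambda_n(G)$ to gain an extra factor of $p^e$ and thus to fall inside $\Phi(\lambda_n(G))$, which is exactly the \emph{failure-of-Galois} obstruction disappearing.
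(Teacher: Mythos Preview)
Your proposal is correct and follows essentially the same route as the paper's proof: both invoke Corollary~\ref{corB} to get the explicit $\theta$-abelian structure $G\cong\Z_p\ltimes Z$, establish $\lambda_n(G)=G^{p^{n-1}}$ by direct computation, and then close the induction via $\Phi(\lambda_n(G))=\lambda_n(G)^p[\lambda_n(G),\lambda_n(G)]=\lambda_{n+1}(G)$. The paper simply asserts ``direct computations imply $\lambda_n(G)=G^{p^{n-1}}$'' and writes the inductive step as the sandwich $\lambda_{n+1}(G)\leq G^{\{n+1\}}\leq\lambda_{n+1}(G)$; your version supplies those computations explicitly via the valuation identity $v_p(q^{p^k a}-1)=k+v_p(a)+e$ and phrases the induction as the containment $[G,\lambda_n(G)]\subseteq\Phi(\lambda_n(G))$, but the content is the same.
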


\begin{proof}
 By Corollary~\ref{corB}, the maximal pro-$p$ Galois group $G$ is a locally powerful group,
with a presentation as in (\ref{eq:presentationincoro}).
Direct computations imply that $\lambda_n(G)=G^{p^{n-1}}$ for all $n\geq2$.
In particular,
\[G^{\{3\}}=\Phi(G)^p[\Phi(G),\Phi(G)]=G^{p^2}=G^{(3)}.\]
Moreover, if we assume that $G^{\{n\}}=G^{(n)}=\lambda_n(G)$, then
\[ \lambda_{n+1}(G)\leq G^{\{n+1\}}=\lambda_n(G)^p[\lambda_n(G),\lambda_n(G)]\leq\lambda_{n+1}(G),\]
so that $G^{\{n+1\}}=\lambda_{n+1}(G)$.
Therefore, $G^{(n)}=G^{\{n\}}=\lambda_n(G)$ and, by Remark~\ref{rem:J}, $F^{\{n\}}=F^{(n)}$ for all $n\geq2$.
\end{proof}

On the other hand, if $F$ is not $p$-rigid, we have the opposite.

\begin{thm}\label{thm:nonrigidimpliesnonF3}
 If $F$ is not $p$-rigid, then $F^{\{3\}}\supsetneq F^{(3)}$.
\end{thm}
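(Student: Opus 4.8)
The plan is to recast the statement, via the modules $J_n$, as an assertion about the $G$-action on $J$, and then to write down an explicit moved class. By equation (\ref{extensionwithJ}) with $n=2$ (and using $F^{\{2\}}=F^{(2)}$) one has $F^{(3)}=F^{(2)}(\sqrt[p]{J^G})$ while $F^{\{3\}}=F^{(2)}(\sqrt[p]{J})$; hence, as in Remark~\ref{rem:J}, the inequality $F^{\{3\}}\supsetneq F^{(3)}$ is equivalent to $J^G\subsetneq J$, i.e. to the existence of a class in $J$ that is moved by $G=G_F(p)$. So it suffices to produce $\gamma\in\dot{F^{(2)}}$ and $\tau\in\Gal(F^{(2)}/F)$ with $\tau.[\gamma]\neq[\gamma]$ in $J$; by Lemma~\ref{lemmaGalois} this is the same as exhibiting a degree-$p$ extension $F^{(2)}(\sqrt[p]{\gamma})$ of $F^{(2)}$ which is not Galois over $F$.

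To build the witness I would start from the failure of rigidity: there exist $a\in\dot F\smallsetminus\dot F^p$ and $b\in N(F(\sqrt[p]{a}))$ with $[a]_F,[b]_F$ linearly independent. Writing $E=F(\sqrt[p]{a})$, $\langle\sigma\rangle=\Gal(E/F)$, and $b=N_{E/F}(\beta)$ with $\beta\in\dot E$, my candidate is simply $\gamma=\beta$, regarded inside $\dot{F^{(2)}}$ through $E\subseteq F^{(2)}$. Lifting $\sigma$ to $\tilde\sigma\in\Gal(F^{(2)}/F)$, the assertion to be checked is $\tilde\sigma.[\beta]\neq[\beta]$ in $J$. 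Writing $\epsilon\colon\dot F/\dot F^p\to\dot E/\dot E^p$ and $\epsilon'\colon\dot E/\dot E^p\to J$ for the natural maps, one has $\tilde\sigma.[\beta]-[\beta]=\epsilon'\big((\sigma-1)[\beta]_E\big)$, so the task reduces to showing $(\sigma-1)[\beta]_E\notin\ker(\epsilon')$.

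The key computation exploits the $\F_p[\Gal(E/F)]$-module structure of $\dot E/\dot E^p$: since $\sigma^p=1$, the group ring is $\F_p[t]/(t^p)$ with $t=\sigma-1$, and the norm operator is $N=(\sigma-1)^{p-1}$. First I would identify $\ker(\epsilon')$: an element $u\in\dot E$ lies in $(\dot{F^{(2)}})^p$ iff $E(\sqrt[p]{u})\subseteq F^{(2)}$, and since $F^{(2)}/F$ is elementary abelian this forces $E(\sqrt[p]{u})=F(\sqrt[p]{a},\sqrt[p]{v})$ for some $v\in\dot F$, whence a short Kummer computation gives $[u]_E=\epsilon([v^m]_F)$; thus $\ker(\epsilon')=\epsilon(\dot F/\dot F^p)$. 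It then remains to verify $(\sigma-1)[\beta]_E\notin\epsilon(\dot F/\dot F^p)$. Here is the crux: assuming $(\sigma-1)[\beta]_E=\epsilon([g]_F)$, I apply $(\sigma-1)^{p-2}$ to both sides; the right-hand side vanishes because $\epsilon([g]_F)$ is $\sigma$-fixed and $p-2\geq1$ (this is exactly where $p$ odd is used), while the left-hand side is $(\sigma-1)^{p-1}[\beta]_E=N[\beta]_E=\epsilon([b]_F)$. Hence $\epsilon([b]_F)=0$, i.e. $b\in\dot E^p$; but $\ker(\epsilon)=\langle[a]_F\rangle$, so $[b]_F\in\langle[a]_F\rangle$, contradicting the independence of $[a]_F$ and $[b]_F$.

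I expect the main obstacle to be the identification $\ker(\epsilon')=\epsilon(\dot F/\dot F^p)$, which governs the delicate passage from the manageable finite module $\dot E/\dot E^p$ up to the infinite module $J$: one must rule out that a $\sigma$-non-invariant class of $\dot E/\dot E^p$ becomes trivial in $J$ for spurious reasons, and it is precisely the elementary-abelianness of $F^{(2)}/F$ that prevents this. Once that bridge is secured, the module-theoretic cancellation $(\sigma-1)^{p-2}\circ(\sigma-1)=N$ finishes the argument, and since $\beta$ visibly lies in $F^{\{3\}}$ while $F^{(2)}(\sqrt[p]{\beta})/F$ fails to be Galois, the extension it generates certifies $F^{\{3\}}\supsetneq F^{(3)}$. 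Notably this route is entirely elementary, in the spirit advertised in the Introduction: it uses only Kummer theory and the norm, and sidesteps the powerful-group machinery of the previous section.
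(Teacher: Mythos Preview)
Your argument is correct, and it takes a genuinely different route from the paper's. The paper works in the bicyclic extension $F(\sqrt[p]{a},\sqrt[p]{b})$: it lets $\delta\in F(\sqrt[p]{b})$ have norm $a$, sets $c=\sqrt[p]{a}$, uses Hilbert's Satz~90 to produce $\gamma$ with $(\tau-1).\gamma=\delta/c$, and then invokes two separate lemmas (Lemma~\ref{lemmaKummer} and Lemma~\ref{lemmapower}) to show $\delta/c\notin(\dot{F^{(2)}})^p$, so that $F^{(2)}(\sqrt[p]{\gamma})/F$ is not Galois. Your approach stays inside the degree-$p$ cyclic extension $E=F(\sqrt[p]{a})$, takes the norm preimage $\beta$ itself as the witness, and replaces the explicit norm computations by the single module-theoretic identity $(\sigma-1)^{p-1}=N$ in $\F_p[\langle\sigma\rangle]$. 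Your identification $\ker(\epsilon')=\epsilon(\dot F/\dot F^p)$ is essentially the degree-$p$ analogue of the paper's Lemma~\ref{lemmaKummer}, proved by the same elementary-abelian argument; what you gain is that Hilbert~90 and Lemma~\ref{lemmapower} become unnecessary, at the cost of making the use of $p$ odd explicit (via $p-2\geq1$) rather than hidden in a norm computation. Both proofs are elementary in the sense of the Introduction; yours is somewhat more streamlined and brings out the $\F_p[\Gal(E/F)]$-module viewpoint that also underlies \cite{janswallow}.
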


In order to prove the above theorem, we need two further lemmas.

\begin{lem}\label{lemmaKummer}
Let $E/F$ be a bicyclic extension of degree $p^2$, and let $L=F^{(2)}$, i.e.,
$E=F(\sqrt[p]{a},\sqrt[p]{b})$ with $a,b\in \dot{F}\smallsetminus\dot{F}^p$
such that $[a]_F,[b]_F$ are $\F_p$-linearly independent in $\dot{F}/\dot{F}^p$.
Assume $\gamma\in \dot{E}$.
Then $\gamma\in \dot{L}^p$ if, and only if, $\gamma\in \dot{F}\cdot\dot{E}^p$.
\end{lem}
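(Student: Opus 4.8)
The plan is to establish the two inclusions separately; the reverse implication is immediate, while the forward one carries all the content. Throughout I will write $L=F^{(2)}=F(\sqrt[p]{F})$ and use the basic Kummer-theoretic fact, recalled just before the Definition of $p$-rigidity and encoded in the isomorphism (\ref{Kummerisomorphism}), that $L/F$ is an elementary abelian $p$-extension and that subgroups of $\dot{F}/\dot{F}^p$ correspond to exponent-$p$ abelian subextensions of $L/F$.

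For the inclusion $\gamma\in\dot{F}\cdot\dot{E}^p\Rightarrow\gamma\in\dot{L}^p$, I would write $\gamma=c\delta^p$ with $c\in\dot{F}$ and $\delta\in\dot{E}$. By the very definition of $L$ we have $\sqrt[p]{c}\in L$, so $c=(\sqrt[p]{c})^p\in\dot{L}^p$; and since $\delta\in\dot{E}\subseteq\dot{L}$ we get $\delta^p\in\dot{L}^p$. Multiplying, $\gamma\in\dot{L}^p$.

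For the converse, suppose $\gamma=\lambda^p$ with $\lambda\in\dot{L}$, and set $M:=E(\lambda)\subseteq L$. The structural point is that $\Gal(L/F)$ is elementary abelian of exponent $p$, so every intermediate field of $L/F$ is Galois over $F$ with elementary abelian Galois group; in particular this applies to $M$. If $\lambda\in E$ then $\gamma=\lambda^p\in\dot{E}^p$ and we are done with $c=1$. Otherwise $X^p-\gamma$ is irreducible over $E$ (as $\mu_p\subseteq E$ and $\gamma\notin\dot{E}^p$), so $M/E$ is cyclic of degree $p$ and $[M:F]=p^2\cdot p=p^3$. Now I would apply Kummer theory over $F$: since $M/F$ is elementary abelian, $M=F(\sqrt[p]{W})$ for a subgroup $W\leq\dot{F}/\dot{F}^p$ with $\dim_{\F_p}W=3$, and $W\supseteq\langle[a]_F,[b]_F\rangle$ because $\sqrt[p]{a},\sqrt[p]{b}\in E\subseteq M$ force $a,b\in\dot{F}\cap\dot{M}^p$. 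Extending the independent pair $[a]_F,[b]_F$ to a basis $[a]_F,[b]_F,[c]_F$ of $W$ with $c\in\dot{F}$ gives $M=F(\sqrt[p]{a},\sqrt[p]{b},\sqrt[p]{c})=E(\sqrt[p]{c})$. Comparing the two degree-$p$ presentations $E(\sqrt[p]{\gamma})=M=E(\sqrt[p]{c})$ of the same cyclic extension of $E$ forces $[\gamma]_E=k\,[c]_E$ in $\dot{E}/\dot{E}^p$ for some $k\in\F_p^\times$; hence $\gamma=c^k\delta^p$ with $\delta\in\dot{E}$ and $c^k\in\dot{F}$, which is exactly the assertion $\gamma\in\dot{F}\cdot\dot{E}^p$.

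The step I expect to require the most care is the passage to $M=E(\lambda)$ and the exactness of the dimension count. The whole argument hinges on the observation that the radical $\lambda$, a priori only known to lie in the large field $L$, generates over $E$ an extension that is already cut out over the base field $F$ by $p$th roots of elements of $F$ — and this is guaranteed precisely because $M\subseteq F^{(2)}$ makes $M/F$ abelian of exponent $p$. One must then check that $\dim_{\F_p}W=3$ is exact, so that a single extra generator $c\in\dot{F}$ beyond $a,b$ suffices; everything after that is a routine comparison of cyclic degree-$p$ extensions via the correspondence in (\ref{Kummerisomorphism}), together with the standard irreducibility criterion for $X^p-\gamma$. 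Note that, unlike several earlier results in this section, this lemma uses no $p$-rigidity hypothesis on $F$.
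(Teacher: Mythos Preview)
Your proof is correct and follows essentially the same approach as the paper's. The paper's argument is terser: it simply observes that if $\sqrt[p]{\gamma}\in L\smallsetminus E$ then, by Kummer theory (since $L=E(\sqrt[p]{\dot F})$), the class $[\gamma]_E$ lies in the image of $\dot{F}\to\dot{E}/\dot{E}^p$, which is precisely your conclusion; your version unpacks this by passing to the finite intermediate field $M=E(\sqrt[p]{\gamma})$ and applying the Kummer correspondence over $F$ rather than over $E$.
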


\begin{proof}
Let $\gamma=x\delta^p$, with $x\in \dot{F}$ and $\delta\in \dot{E}$.
Then it is clear that $\gamma\in\dot{L}^p$, for $L=F(\sqrt[p]{F})$.

On the other hand, assume that $\gamma\in\dot{L}^p$.
Then, either $\gamma$ is a $p$-power in $E$, or it becomes a $p$-power via the extension $L/E$,
i.e., $\sqrt[p]{\gamma}\in L\smallsetminus E$.
Therefore, by Kummer theory, $\gamma$ is equivalent to an element $x\in \dot{F}$ modulo $\dot{F}^p$,
namely, $\gamma\in x\dot{F}^p$.
This proves the lemma.
\end{proof}

\begin{lem}\label{lemmapower}
Let $E/F$ be a cyclic extension of degree $p$, i.e., $E=F(\sqrt[p]{a})$ with $a\in \dot{F}\smallsetminus\dot{F}^p$.
Then
\[\sqrt[p]{a}\notin \dot{F}\cdot\dot{E}^p\]
\end{lem}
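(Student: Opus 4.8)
The plan is to argue by contradiction, exploiting the Galois action of $\Gal(E/F)$ together with the presence of $\mu_p$ in $F$. Write $\alpha=\sqrt[p]{a}$, so that $\alpha^p=a\in\dot F$ and $E=F(\alpha)$. Since $E/F$ is cyclic of degree $p$ and $\mu_p\subseteq F$, Kummer theory lets me fix a generator $\sigma$ of $\Gal(E/F)$ with $\sigma(\alpha)=\zeta_p\alpha$. Suppose, for contradiction, that $\alpha\in\dot F\cdot\dot E^p$, say $\alpha=x\delta^p$ with $x\in\dot F$ and $\delta\in\dot E$.

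First I would apply $\sigma$ to this identity. Since $x\in F$ is fixed by $\sigma$, one gets $\zeta_p\alpha=\sigma(\alpha)=x\,\sigma(\delta)^p$, while $\alpha=x\delta^p$; substituting and dividing by $x\delta^p$ yields $(\sigma(\delta)/\delta)^p=\zeta_p$. Set $\eta=\sigma(\delta)/\delta\in\dot E$, so $\eta^p=\zeta_p$. Because $\eta^p=\zeta_p$ lies in $F$ and is fixed by $\sigma$, we have $\sigma(\eta)^p=\eta^p$, so $\sigma(\eta)/\eta\in\mu_p$; write $\sigma(\eta)=\zeta_p^{\,k}\eta$ for some $k\in\{0,\dots,p-1\}$.

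The crux is to evaluate the orbit product $\prod_{i=0}^{p-1}\sigma^i(\eta)=N_{E/F}(\eta)$ in two ways. On one hand, since $\eta=\sigma(\delta)/\delta$ the product telescopes:
\[
\prod_{i=0}^{p-1}\sigma^i(\eta)=\prod_{i=0}^{p-1}\frac{\sigma^{i+1}(\delta)}{\sigma^i(\delta)}=\frac{\sigma^p(\delta)}{\delta}=1,
\]
because $\sigma^p=\mathrm{id}$. On the other hand, from $\sigma(\eta)=\zeta_p^{\,k}\eta$ one gets $\sigma^i(\eta)=\zeta_p^{\,ik}\eta$, so the same product equals $\zeta_p^{\,k\binom{p}{2}}\eta^p=\zeta_p^{\,kp(p-1)/2}\cdot\zeta_p$. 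Here the oddness of $p$ is essential: $p(p-1)/2$ is divisible by $p$, so the first factor is $1$ and the product equals $\zeta_p$. Comparing the two evaluations forces $\zeta_p=1$, contradicting that $\zeta_p$ is a primitive $p$th root of unity.

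The main obstacle is conceptual rather than computational: the relation $\eta^p=\zeta_p$ merely says $\eta$ is a primitive $p^2$th root of unity, and one might worry that nothing a priori prevents $\mu_{p^2}$ from sitting inside $E$. The telescoping norm computation is precisely what rules this out, and the hypothesis that $p$ is odd is what makes the symmetric factor $\zeta_p^{\,kp(p-1)/2}$ disappear (for $p=2$ the argument would break down, consistent with the paper's standing assumption that $p$ is odd).
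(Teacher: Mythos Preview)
Your argument is correct, but it takes a longer detour than the paper's proof. The paper simply applies the norm $N_{E/F}$ directly to the hypothetical relation $\alpha=x\delta^p$: since $N_{E/F}(x)=x^p$ and $N_{E/F}(\delta^p)=N_{E/F}(\delta)^p$, one gets $N_{E/F}(\alpha)\in\dot F^p$, while $N_{E/F}(\sqrt[p]{a})=a\notin\dot F^p$. That is the whole proof.

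Your route---applying $\sigma$ first, extracting $\eta=\sigma(\delta)/\delta$ with $\eta^p=\zeta_p$, and then computing $N_{E/F}(\eta)$ in two ways---is valid, and the telescoping/symmetric computation is done cleanly (the use of $p$ odd to kill $\zeta_p^{\,kp(p-1)/2}$ is correct). But notice that your telescoping step is exactly Hilbert~90 in the easy direction, and the entire detour through $\eta$ ultimately re-derives, in a disguised form, the same norm obstruction the paper uses directly. In effect you compute $N_{E/F}$ of a ratio rather than of $\alpha$ itself. The paper's approach is strictly shorter and does not require tracking the Galois action on $\eta$ or invoking the parity of $p$ at that step; your approach does not gain any extra generality or information to compensate for the added length.
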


\begin{proof}
Let $\alpha\in \dot{F}\dot{E}^p$.
Then there exist $x\in \dot{F}$, $\gamma\in \dot{E}$ such that $\alpha=x\gamma^p$.
Thus
\[N_{E/F}(\alpha)=N_{E/F}\left(x\gamma^p\right)=x^pN_{E/F}(\gamma)^p\in\dot{F}^p,\]
with $N_{E/F}$ the norm of $E/F$.
Since $N_{E/F}(\sqrt[p]{a})=a\notin\dot{F}^p$, it follows that $\sqrt[p]{a}\notin \dot{F}\dot{E}^p$.
\end{proof}

\begin{proof}[Proof of Theorem \ref{thm:nonrigidimpliesnonF3}]
Since $F$ is not $p$-rigid, there exist two elements $a,b\in \dot{F}$ such that
$[a]_F$ and $[b]_F$ are $\F_p$-linearly independent, and $a$ is a norm of $F(\sqrt[p]{b})/F$.
Recall that the linear independence implies that $F(\sqrt[p]{a})\neq F(\sqrt[p]{b})$,
so that $E/F$ is a bicyclic extension of degree $p^2$, where $E=F(\sqrt[p]{a},\sqrt[p]{b})$.
Let $\Gal(E/F)\cong C_p\times C_p$ be generated by $\sigma,\tau$,
with $F(\sqrt[p]{a})=E^{\langle\tau\rangle}$ and $F(\sqrt[p]{b})=E^{\langle\sigma\rangle}$.
Then the lattice of the fields $L\supseteq E\supseteq F$ is the following:
\begin{displaymath}
 \xymatrix{ & L\ar@{-}[d] & \\ & E\ar@{-}[dl]_\tau\ar@{-}[dr]^\sigma & \\
F(\sqrt[p]{a})\ar@{-}[dr]_\sigma & & F(\sqrt[p]{b})\ar@{-}[dl]^\tau \\ & F &}
\end{displaymath}

Let $\delta\in F(\sqrt[p]{b})$ such that $N_{F(\sqrt[p]{b})/F}(\delta)=a$, and let $c=\sqrt[p]{a}$.
Then
\begin{eqnarray*}
&& N_{E/F(\sqrt[p]{a})}(\delta)=\delta\cdot (\tau.\delta)\cdots(\tau^{p-1}.\delta)=N_{F(\sqrt[p]{b})/F}(\delta)=a\\
&\text{and}& N_{E/F(\sqrt[p]{a})}(c)=c\cdot(\tau.c)\cdots(\tau^{p-1}.c)=c\cdot\zeta c\cdots\zeta^{p-1}c=c^p,
\end{eqnarray*}
with $\zeta$ a primitive $p$th root of unity, so that
\[N_{E/F(\sqrt[p]{a})}\left(\frac{\delta}{c}\right)=\frac{N_{E/F(\sqrt[p]{a})}(\delta)}{N_{E/F(\sqrt[p]{a})}(c)}=\frac{a}{c^p}=1.\]
Therefore, by Hilbert's Satz 90 there exists $\gamma\in \dot{E}$ such that
\begin{equation}\label{definitiongamma}
 (\tau-1).\gamma=\frac{\tau.\gamma}{\gamma}=\frac{\delta}{c}.
\end{equation}
Suppose that $\delta/c\in \dot{L}^p$.
Then by Lemma~\ref{lemmaKummer} one has that $\delta/c\in \dot{F}\cdot\dot{E}^p$.
In particular, this implies that $c=\sqrt[p]{a}\in F(\sqrt[p]{b})^\times\cdot\dot{E}^p$,
which is impossible by Lemma~\ref{lemmapower}.
Hence $\delta/c\notin \dot{L}^p$.
Thus, by (\ref{definitiongamma}) and by Lemma~\ref{lemmaGalois} the extension $L(\sqrt[p]{\gamma})/F$
is not Galois and $\gamma^{1/p}$ is not in $F^{(3)}$.
\end{proof}

Now Proposition \ref{prop:rigitimpliesF3} and Theorem \ref{thm:nonrigidimpliesnonF3} imply Theorem A,

\begin{rem} \label{rem:prigid}
By the proof of Proposition \ref{prop:rigitimpliesF3} it follows that $F$ is $p$-rigid then $G^{(n)} = \lambda_n(G) = G^{p^{n-1}}$ for all $n >1$.
\end{rem}

\subsection{Recovering $G_F(p)$ and $F(p)$ from small Galois groups} \label{4.2}
As in Section \ref{subsec:4.1proof}, let $G$ be the maximal pro-$p$ Galois group $G_F(p)$ of the field $F$.
For $h > 0$, let $\mu_{p^h} \subseteq F(p)$  be the group of $p^h$ roots of unity. We also set $\mu_{p^\infty}$ to be the group of  all roots of unity of order $p^m$ for some $m \ge 0$. Finally we set $k \in \mathbb{N} \cup  \{ \infty \}$ to be the maximum of all $h \in \mathbb{N} \cup \{ \infty \}$ such that
$\mu_{p^h} \subseteq F$.

For a field $F$
 which is $p$-rigid, let $E/F$ be a Galois extension of degree $p$. Therefore $E = F (b^{1/p})$
 for some $b \in \dot{F} \backslash \dot{F}^p$.  Then by Kummer theory, we may choose a set of representatives $\{ b_i \colon i \in \mathcal{J}\} \subseteq \dot{F}$ of $\dot{F} \backslash  \dot{F}^p$ with $b = b_j$ for some $j \in \mathcal{J}$. Thus Lemma \ref{lem:janswallow} implies
that
\begin{equation}\label{eq:4.2E}
 \dot{E}/\dot{E}^p=\left\langle[\sqrt[p]{b_j}]_E,[b_i]_E\right\rangle_{i\neq j}.
\end{equation}
Assume now that $k<\infty$.
Then we may pick a set of representatives $\mathcal{A}=\{\zeta_{p^k},a_i,i\in\mathcal{I}\}\subseteq\dot{F}$,
where $\zeta_{p^k}$ is a fixed primitive $p^k$th root of unity,
so that $\bar{\mathcal{A}}=\{[\zeta_{p^k}]_F,[a_i]_F,i\in\mathcal{I}\}$
is a $\F_p$-basis for $\dot{F}/\dot{F}^p$.
If $k=\infty$, then we still consider a basis $\bar{\mathcal{A}}$ for $\dot{F}/\dot{F}^p$,
where the symbol $[\zeta_{p^k}]_F$ in this case is meant as an empty symbol to be ignored.
Also in this case $[\zeta_{p^{k+n}}]_E$, with $n\in\N$ and $E/F$ a $p$-extension, is also an empty symbol.
We assume that our system of roots of unity $\zeta_{p^l}$ for $l \ge 1$ in $F(p)$ is chosen such that
\[(\zeta_{p^{l+1}})^p = \zeta_{p^l}\]
for all $l \ge 1$.

Let $\mathcal{J}$ be a finite subset of $\mathcal{I}$. Set $\mathcal{J} = \{ 1, \cdots, t\}$ and let
\[ K = F(a_1^{1/p}, \cdots a_t^{1/p}, \zeta_{p^{k+1}}).\]
Then we have a series of Galois extensions
\[ F \subset K_1 \subset K_2 \cdots K_t \subset K_{t+1} = K \subseteq F^{(2)},\]
where $K_1=F(\zeta_{p^{k+1}})$, and $K_{i+1} = K_i(a_i^{1/p})$  for $1 \le i \le t$.
Then by the above arugument and induction one has
\begin{equation}
\dot{K}/\dot{K}^p = \langle [\zeta_{p^{k+1}}, [a_j^{1/p}]_K, j = 1, 2, \cdots t, [f]_K, f \in \dot{F} \rangle
\end{equation}

Assuming this observation we shall prove the following theorem.

\begin{thm}\label{lem:moduloinfinitelygeneratedcase}
 If $F$ is a $p$-rigid field, then
\[\frac{\dot{F}^{(n)}}{\left(\dot{F}^{(n)}\right)^p}=\left\langle[\zeta_{p^{k+n-1}}]_{F^{(n)}},
\left[a_i^{1/p^{n-1}}\right]_{F^{(n)}},i\in\mathcal{I}\right\rangle\]
for every $n \ge1$.
\end{thm}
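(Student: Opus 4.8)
The plan is to induct on $n$. The base case $n=1$ is exactly the defining hypothesis that $\bar{\mathcal{A}}=\{[\zeta_{p^k}]_F,[a_i]_F\}_{i\in\mathcal{I}}$ is an $\F_p$-basis of $\dot{F}/\dot{F}^p$ (and when $k=\infty$ the $\zeta$-symbol is vacuous, per the convention fixed above, so the argument below simply omits it). Although the theorem only asserts a spanning set, I would carry the slightly stronger inductive hypothesis that the displayed elements form an $\F_p$-\emph{basis} of $\dot{F^{(n)}}/(\dot{F^{(n)}})^p$; the linear independence is what lets me reinsert the module of $F^{(n)}$ into the finite computation behind Equation~(\ref{eq:4.2E}), and the theorem is then the spanning half.

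For the inductive step set $L=F^{(n)}$, with $\mathcal{B}_n=\{[\zeta_{p^{k+n-1}}]_L,[a_i^{1/p^{n-1}}]_L\}_{i\in\mathcal{I}}$ an $\F_p$-basis of $\dot{L}/\dot{L}^p$ by hypothesis. Two structural facts start the machinery. First, since $F$ is $p$-rigid it is hereditarily $p$-rigid (Theorem~\ref{thm:rigidheredrigid}), so the subextension $L\subseteq F(p)$ is itself $p$-rigid; hence $L$ satisfies every hypothesis that $F$ did, and Lemma~\ref{lem:janswallow} together with the finite computation yielding Equation~(\ref{eq:4.2E}) applies verbatim over $L$, with $\mathcal{B}_n$ playing the role of $\bar{\mathcal{A}}$. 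Second, rigidity forces $F^{(n+1)}=F^{\{n+1\}}$ (Theorem~A, via Proposition~\ref{prop:rigitimpliesF3}), and by definition $F^{\{n+1\}}=L(\sqrt[p]{\dot{L}})$; thus $F^{(n+1)}$ is obtained from $L$ by adjoining all $p$th roots, and is the directed union of the finite subextensions
\[M_S=L\!\left(\sqrt[p]{\zeta_{p^{k+n-1}}},\ \sqrt[p]{a_i^{1/p^{n-1}}}:i\in S\right)=L\!\left(\zeta_{p^{k+n}},\ a_i^{1/p^n}:i\in S\right),\qquad S\subseteq\mathcal{I}\ \text{finite},\]
where I use the compatible choices $\zeta_{p^{k+n}}=\sqrt[p]{\zeta_{p^{k+n-1}}}$ and $a_i^{1/p^n}=\sqrt[p]{a_i^{1/p^{n-1}}}$. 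That each element of $\dot{F^{(n+1)}}$ lies in some $M_S$ is immediate from the inductive hypothesis: any $\gamma\in\dot{L}$ equals a finite product of the $\mathcal{B}_n$-elements times a $p$th power, so (using $\mu_p\subseteq L$) $L(\sqrt[p]{\gamma})$ sits inside $M_S$ for a suitable finite $S$.

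Next I would run the finite observation over $L$. Since $\mathcal{B}_n$ is a basis of $\dot{L}/\dot{L}^p$ and $M_S$ is built from $L$ by successively adjoining $p$th roots of the distinct basis elements $\zeta_{p^{k+n-1}}$ and $a_i^{1/p^{n-1}}$ $(i\in S)$, iterating Equation~(\ref{eq:4.2E}) exactly as in the preamble gives
\[\frac{\dot{M_S}}{(\dot{M_S})^p}=\left\langle[\zeta_{p^{k+n}}]_{M_S},\ [a_i^{1/p^n}]_{M_S}\ (i\in S),\ [\ell]_{M_S}\ (\ell\in\dot{L})\right\rangle,\]
while the direct-sum form of Lemma~\ref{lem:janswallow} shows the new radical generators are $\F_p$-independent of the image of $\dot{L}/\dot{L}^p$. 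Passing to the exact direct limit $\dot{F^{(n+1)}}/(\dot{F^{(n+1)}})^p=\varinjlim_S\dot{M_S}/(\dot{M_S})^p$, every class $[\ell]_{F^{(n+1)}}$ with $\ell\in\dot{L}$ dies: $\ell$ is a product of the $\zeta_{p^{k+n-1}}$ and $a_i^{1/p^{n-1}}$ modulo $\dot{L}^p$, and each of these has become a $p$th power in $F^{(n+1)}$, namely $(\zeta_{p^{k+n}})^p$ and $(a_i^{1/p^n})^p$. What survives is precisely $\{[\zeta_{p^{k+n}}]_{F^{(n+1)}},[a_i^{1/p^n}]_{F^{(n+1)}}:i\in\mathcal{I}\}$; and since any finite relation among these already holds in some $M_S$, the independence just noted forces it to be trivial. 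Hence $\mathcal{B}_{n+1}$ is a basis, closing the induction.

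The main obstacle I anticipate is the bookkeeping in the infinitely generated case: the modules $\dot{F^{(n)}}/(\dot{F^{(n)}})^p$ need not be finite-dimensional, so there is no dimension count available (as Corollary~\ref{corF} would supply when $d<\infty$) to upgrade spanning to a basis. Every assertion must instead be reduced to a finite subextension $M_S$, where the already-established finite observation and the direct-sum decomposition of Lemma~\ref{lem:janswallow} genuinely apply, and then transported through the direct limit, which commutes with the formation of $\dot{M}/\dot{M}^p$. Retaining the basis property, rather than mere spanning, as the inductive invariant is essential, since the very first application of Equation~(\ref{eq:4.2E}) at level $n+1$ requires both $[\zeta_{p^{k+n-1}}]_L\neq0$ and the $\F_p$-independence of the $[a_i^{1/p^{n-1}}]_L$.
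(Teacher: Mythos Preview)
Your argument is correct and follows the same scheme as the paper's proof: induct on $n$, reduce any element of $\dot{F}^{(n+1)}$ to a finite subextension of the form $M_S$, apply the iterated version of Equation~(\ref{eq:4.2E}) (which rests on Lemma~\ref{lem:janswallow} and hence on hereditary $p$-rigidity), and then pass to the direct limit so that the classes coming from $\dot{L}$ die. The only notable differences are cosmetic: you invoke Theorem~A to identify $F^{(n+1)}=L(\sqrt[p]{\dot{L}})$, whereas the paper obtains $F^{(n+1)}=F^{(n)}(\zeta_{p^{k+n}},a_i^{1/p^n})$ directly from the inductive spanning hypothesis together with the Galois-ness of each $F^{(n)}(a_i^{1/p^n})/F$; and you carry the basis property through the induction, a refinement the paper defers to Remark~\ref{complicatedway}.
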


\begin{proof}
Let $\bar{\mathcal{A}} = \{ [\zeta_{p^k}]_F, [a_i]_F, i \in \mathcal{I}\}$ be an $\mathcal{F}_p$ basis for
$\dot{F}/(\dot{F})^p$ as above.  We observe that for $n=1$ our statement is clear because $F^{(1)}= F$.
In order to see that our statement is also true for $n =2$ consider any  $[\alpha]_{F^{(2)}} \in \dot{F^{(2)}}/(\dot{F^{(2)}})^p $ with $\alpha$ in
$\dot{F^{(2)}}$.  Then there exists a finite subset $\mathcal{J} \subset \mathcal{I}$ such that
$\alpha \in K =  F(\zeta_{p^{k+1}}, a_j^{1/p}, j \in \mathcal{J})$. (Again, we ignore $\zeta_{p^{k+1}}$  if $k = \infty$.)
Now we see that $[\alpha]_K$ can be expressed as a product of powers of $[\zeta_{p^{k+1}}]_K, [a_j^{1/p}]_K$ and a finite number of elements $[f_l]_K$, $l=1,\cdots n, f_l \in \dot{F}$.  Passing to $F^{(2)}$, all elements $[f_j]_{[F^{(2)}]}$ become $[1]_{F^{(2)}}$.
Therefore
\begin{equation}
 \dot{F^{(2)}}/(\dot{F^{(2)}})^p=\left\langle[\zeta_{p^{k+1}}]_{F^{(2)}},
[\sqrt[p]{a_i}]_{F^{(2)}}, i \in \mathcal{I} \right\rangle
\end{equation}
This proves over statement for $n=2$. Now going from $n$ to $n+1$ is just like going from $n=1$ to $n=2$ done above taking into account that
\[ F^{(n+1)} = F^{(n)}( \zeta_{p^{k+n}}, a_i^{1/p^n} , i \in \mathcal{I}).\]
The last equality follows by induction hypothesis on $n$ and by observing that $F^{(n)} (a_i^{1/p^n}, i \in \mathcal{I})$ is Galois over $F$ for each
$i \in I$ as $\zeta_{p^{k+n-1}}$, and hence also $\zeta_{p^{n}}$ belong to $F^{(n)}$. Hence we proved our statement for all $n$.
\end{proof}

\begin{rem} \label{complicatedway}
In fact taking into account our convention about the symbol $[\zeta_{p^{k+n-1}}]_{F^{(n)}}$ when $k$ is $\infty$, one can show in a similar but slightly more complicated way as in the proof above that
\[\{[\zeta_{p^{k+n-1}}]_{F^{(n)}},
[{a_i}^{1/p^{n-1}}]_{F^{(n)}}, i \in \mathcal{I} \}\]
is a basis of $\dot{F^{(n)}}/(\dot{F^{(n)}})^p$ over $\mathbb{F}_p$ for all $n \in \mathcal{N}$.
\end{rem}

\begin{cor} \label{cor-prigid}
Assume that $F$ is a $p$-rigid field. Then we have the following.

(a) For all $n \ge 1$,
 \[ F^{(n)} = F( \zeta_{p^{k+n-1}}, a_i^{1/p^{n-1}} , i \in \mathcal{I})\]

(b)
\[ F(p) = \bigcup_{n \ge 1} F( \zeta_{p^{k+n}}, a_i^{1/p^n} , i \in \mathcal{I}).\]
\end{cor}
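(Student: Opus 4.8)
The plan is to derive Corollary \ref{cor-prigid} as a direct packaging of Theorem \ref{lem:moduloinfinitelygeneratedcase} (together with Remark \ref{complicatedway}), so the real content lies in the theorem and I would treat the corollary as a bookkeeping step. For part (a), Theorem \ref{lem:moduloinfinitelygeneratedcase} already identifies a generating set $\{[\zeta_{p^{k+n-1}}]_{F^{(n)}}, [a_i^{1/p^{n-1}}]_{F^{(n)}} : i \in \mathcal{I}\}$ for $\dot{F}^{(n)}/(\dot{F}^{(n)})^p$. First I would invoke Kummer theory over the base field $F$: since $\mu_p \subseteq F$ and $F^{(n)}/F$ is a $p$-extension, the field $F^{(n)}$ is generated over $F$ by $p$th (and iterated $p$th) roots of elements whose classes span $\dot{F}^{(m)}/(\dot{F}^{(m)})^p$ at each stage $m < n$. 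Running the recursion $F^{(m+1)} = F^{(m)}(\sqrt[p]{(J_m)^G})$ from (\ref{extensionwithJ}) and feeding in the explicit generators from the theorem, I would show by induction on $n$ that every generator $\zeta_{p^{k+m}}$ and $a_i^{1/p^m}$ needed to build $F^{(m+1)}$ from $F^{(m)}$ is already a $p$th root of one of the listed generators of $\dot{F}^{(m)}/(\dot{F}^{(m)})^p$. This yields the field equality $F^{(n)} = F(\zeta_{p^{k+n-1}}, a_i^{1/p^{n-1}} : i \in \mathcal{I})$ claimed in (a).

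More precisely, I would organize the induction so that the inductive hypothesis gives both the module description of Theorem \ref{lem:moduloinfinitelygeneratedcase} at level $n$ \emph{and} the field description of (a) at level $n$; the inductive step then observes that adjoining $p$th roots of the generators $[\zeta_{p^{k+n-1}}]_{F^{(n)}}$ and $[a_i^{1/p^{n-1}}]_{F^{(n)}}$ produces exactly $\zeta_{p^{k+n}}$ and $a_i^{1/p^n}$, using the compatibility $(\zeta_{p^{l+1}})^p = \zeta_{p^l}$ fixed in the setup. The only subtlety is the Galois condition built into the definition of $F^{(n+1)}$: I must check that each extension $F^{(n)}(a_i^{1/p^n})/F$ is Galois, so that these radicals genuinely lie in $F^{(n+1)}$ rather than merely in $F^{\{n+1\}}$. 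This is precisely the point already verified inside the proof of Theorem \ref{lem:moduloinfinitelygeneratedcase}, where one notes that $\zeta_{p^{k+n-1}}$, and hence $\zeta_{p^n}$, belongs to $F^{(n)}$, so that the $G$-conjugates of $a_i^{1/p^n}$ differ by $p^n$th roots of unity already present; thus Lemma \ref{lemmaGalois} applies.

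For part (b) I would simply combine part (a) with Proposition \ref{prop:maxpextwithourfields}, which states $F(p) = \bigcup_{n>1} F^{(n)}$. Substituting the explicit expression for $F^{(n)}$ from (a) and reindexing the exponents, the union $\bigcup_{n \ge 1} F(\zeta_{p^{k+n}}, a_i^{1/p^n} : i \in \mathcal{I})$ is cofinal in the chain $\{F^{(n)}\}$, so the two unions coincide. A cleaner way to phrase this: the family of fields on the right is an increasing chain whose $n$th term equals $F^{(n+1)}$ by (a), hence its union is $\bigcup_{n \ge 2} F^{(n)} = F(p)$.

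I expect the main obstacle to be purely formal rather than mathematical: ensuring the inductive argument cleanly separates the Galois-closure verification (needed to stay inside $F^{(n)}$ rather than $F^{\{n\}}$) from the generation statement, and handling the $k = \infty$ case uniformly via the empty-symbol convention adopted for $[\zeta_{p^{k+n-1}}]$. Since Theorem \ref{lem:moduloinfinitelygeneratedcase} and its proof already carry all the substantive work, the corollary itself should follow with only the two short bookkeeping arguments sketched above, and I would keep its written proof correspondingly brief, citing the theorem and Proposition \ref{prop:maxpextwithourfields} as the two inputs.
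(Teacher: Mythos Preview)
Your proposal is correct and matches the paper's proof essentially step for step: both argue (a) by induction on $n$, feeding in the generators of $\dot{F}^{(n)}/(\dot{F}^{(n)})^p$ from Theorem~\ref{lem:moduloinfinitelygeneratedcase}, checking that $F^{(n)}(a_i^{1/p^n})/F$ is Galois (since $\zeta_{p^n}\in F^{(n)}$) so these radicals land in $F^{(n+1)}$, and both obtain (b) immediately from (a) together with Proposition~\ref{prop:maxpextwithourfields}. Your write-up is more explicit about the bookkeeping (the role of equation~(\ref{extensionwithJ}), Lemma~\ref{lemmaGalois}, and the $k=\infty$ convention), but the mathematical content is identical.
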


\begin{proof}
(a) We use Theorem \ref{lem:moduloinfinitelygeneratedcase}, its proof and induction on $n$.  The statement is true for $n=1$ because $\zeta_{p^k}$,
$a_i$, $i \in \mathcal{I}$ all belong to $F$.  Now assume that our statement is true for $n$. Using Theorem \ref{lem:moduloinfinitelygeneratedcase} and the
fact that $F^{(n)}(a_i^{1/p^n})/F$ is Galois for each $i \in \mathcal{I}$, we conclude that
 \[ F^{(n+1)} = F( \zeta_{p^{k+n}}, a_i^{1/p^{n}} , i \in \mathcal{I}).\]
This completes the induction step and we are done.

(b) This follows from the fact that
\[ F(p) = \bigcup_{n \ge 1} F^{(n)}.\]
(see Proposition \ref{prop:maxpextwithourfields}.)
\end{proof}

Now we shall determine all Galois groups $G^{[n]}:= \Gal(F^{(n)}/F)$, for all $n \ge 1$.

\begin{thm}  \label{prigid}
Suppose $F$ is a $p$-rigid field. Then we have the following.

(a)
\[ G^{[n]} =
\begin{cases}
\left( \prod_{\mathcal{I}}  \mathbb{Z}/p^{n-1} \mathbb{Z} \right) \rtimes \mathbb{Z}/p^{n-1}\mathbb{Z}  \ \ & \text{if } k < \infty\\
 \prod_{\mathcal{I}}  \mathbb{Z}/p^{n-1}\mathbb{Z} \ \ & \text{if } k = \infty
\end{cases}
\]
(b)
\[ G = \Gal(F(p)/F) =\begin{cases}
 \left( \prod_{\mathcal{I}}  \mathbb{Z}_p \right) \rtimes \mathbb{Z}_p  \ \ &\text{if } k < \infty\\
 \prod_{\mathcal{I}}  \mathbb{Z}_p \ \ &\text{if } k = \infty
\end{cases}
\]

Moreover when $k < \infty$ there exists a generator $\sigma$ of the outer factor $\mathbb{Z}/p^{n-1}\mathbb{Z}$ in (a) and of the outer factor $\mathbb{Z}_p$ in (b) such that for each $\tau$ from the inner factor $\prod_{\mathcal{I}}\mathbb{Z}/p^{n-1}\mathbb{Z}$ in (a) and each  $\tau$ from the inner factor $\prod_{\mathcal{I}} \mathbb{Z}_p$ in (b) we have
\[ \sigma \tau \sigma^{-1} = \tau^{p^k+1}.\]
\end{thm}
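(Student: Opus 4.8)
The plan is to leverage the structural results already established, namely Corollary~\ref{cor-prigid} (the explicit generators of $F^{(n)}$ and $F(p)$) together with Corollary~\ref{corB} and Remark~\ref{rem:prigid} (the $\theta$-abelian presentation of $G$ with $\lambda_n(G)=G^{p^{n-1}}$). The key observation is that once we know $F^{(n)}=F(\zeta_{p^{k+n-1}}, a_i^{1/p^{n-1}}, i\in\mathcal{I})$, computing $G^{[n]}=\Gal(F^{(n)}/F)$ reduces to understanding how a generator acts on each radical $a_i^{1/p^{n-1}}$ and on $\zeta_{p^{k+n-1}}$. First I would treat the case $k=\infty$: here the empty symbol convention means $F^{(n)}=F(a_i^{1/p^{n-1}})$, all roots of unity already lie in $F$, and the extension is an iterated independent Kummer extension, so $\Gal(F^{(n)}/F)\cong\prod_{\mathcal{I}}\Z/p^{n-1}\Z$ by elementary Kummer theory once one checks that the radicals remain multiplicatively independent modulo $p$-th powers at each level (which follows from Theorem~\ref{lem:moduloinfinitelygeneratedcase} giving a basis of $\dot{F^{(n)}}/(\dot{F^{(n)}})^p$).

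For $k<\infty$ the extension $F^{(n)}/F$ is generated by the $a_i^{1/p^{n-1}}$ together with $\zeta_{p^{k+n-1}}$, and the latter is responsible for the nontrivial semidirect-product structure. I would fix a generator $\sigma$ of $\Gal\bigl(F(\zeta_{p^{k+n-1}})/F\bigr)\cong\Z/p^{n-1}\Z$ acting on roots of unity by the cyclotomic character, and for each $i$ a generator $\tau_i$ fixing all roots of unity and sending $a_i^{1/p^{n-1}}\mapsto \zeta_{p^{n-1}}\,a_i^{1/p^{n-1}}$ while fixing $a_j^{1/p^{n-1}}$ for $j\neq i$. The subgroup generated by the $\tau_i$ is the inner factor $\prod_{\mathcal{I}}\Z/p^{n-1}\Z$ and is normal since it is precisely $\Gal(F^{(n)}/F(\zeta_{p^{k+n-1}}))$. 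The commutation relation $\sigma\tau\sigma^{-1}=\tau^{p^k+1}$ is then a direct computation: since $\zeta_{p^{k}}\in F$, the cyclotomic character of $\sigma$ on $\mu_{p^{k+n-1}}$ has the form $1+p^k(\text{unit})$, and after normalizing the choice of $\sigma$ one obtains exactly the exponent $p^k+1$ by evaluating both sides on the generating radical and tracking the action on $\zeta_{p^{n-1}}$.

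Part~(b) follows by passing to the inverse limit: since $F(p)=\bigcup_{n\geq1}F^{(n)}$ by Corollary~\ref{cor-prigid}(b), one has $G=\varprojlim_n G^{[n]}$, and the compatible systems $\varprojlim \Z/p^{n-1}\Z=\Z_p$ convert each finite factor into its pro-$p$ completion, giving $G\cong\bigl(\prod_{\mathcal{I}}\Z_p\bigr)\rtimes\Z_p$ (or the abelian $\prod_{\mathcal{I}}\Z_p$ when $k=\infty$). The relation $\sigma\tau\sigma^{-1}=\tau^{p^k+1}$ is preserved under the limit because it holds compatibly at every finite level. Consistency with the abstract presentation~(\ref{eq:presentationincoro}) provides a useful cross-check: there $1+\lambda=\theta(\sigma)$ with $\lambda\in p\Z_p$, and the cyclotomic interpretation forces $\lambda=p^k$, matching the exponent above.

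The main obstacle I anticipate is the careful normalization of the generator $\sigma$ and the verification that the action on $\zeta_{p^{n-1}}$ produces precisely the exponent $p^k+1$ rather than some other element $1+p^k u$ with $u$ a unit. This requires using the chosen compatible system of roots of unity $(\zeta_{p^{l+1}})^p=\zeta_{p^l}$ and exploiting the freedom to replace $\sigma$ by a suitable power (a generator of the same cyclic group) to pin down the cyclotomic character on the nose; the independence of the $a_i$-radicals from the cyclotomic part, guaranteed by Theorem~\ref{lem:moduloinfinitelygeneratedcase}, is what allows $\sigma$ to be chosen acting trivially on the $a_i^{1/p^{n-1}}$ while realizing the desired action on roots of unity.
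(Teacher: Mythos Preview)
Your proposal is correct and follows essentially the same strategy as the paper's proof: both use the explicit description $F^{(n)}=F(\zeta_{p^{k+n-1}},a_i^{1/p^{n-1}})$ from Corollary~\ref{cor-prigid}, split $G^{[n]}$ via the cyclotomic tower $F\subset F(\zeta_{p^{k+n-1}})\subset F^{(n)}$, identify the normal Kummer subgroup $\Gal(F^{(n)}/F(\zeta_{p^{k+n-1}}))\cong\prod_{\mathcal I}\Z/p^{n-1}\Z$ and a complementary cyclotomic generator $\sigma$, and then pass to the inverse limit for part~(b). The paper is terser (it delegates the existence of $\sigma$ with the exact action and the semidirect-product computation to standard Kummer-theory references in Lang, Artin--Tate, and Ware), whereas you spell out the generators $\tau_i$ and the commutator check explicitly and flag the normalization of $\sigma$ as the only delicate point---but the underlying argument is the same.
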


\begin{proof}

If $k < \infty$, consider $F^{(n)}$ as the $2$nd step extension of $F$:
\[F \subset F(\zeta_{p^{k+n-1}})  \subset F^{(n)}.\]
Then there exists $\sigma \in G^{[n]}$ such that
\[\sigma(\zeta_{p^{k+n-1}}) = \zeta_{p^{k+n-1}}^{p^{k}+1}\]
and $\sigma$ restricts to identity in $\Gal(F^{(n)}/F(\zeta_{p^{k+n-1}})$.
By standard Kummer theory (see Chapter 6, sections 8 and 9 in \cite{lang}, \cite{artintate} Chapter 6, Section 2,  and also for relevant similar calculations  in \cite{ware}, proof of Theorem 2.), we can deduce that such $\sigma$ exists and that

\begin{eqnarray*}
G^{[n]}  & = &  \Gal(F^{(n)}/F(\zeta_{p^{k+n-1}})) \rtimes \langle \sigma \rangle\\
 & \cong & \prod_{\mathcal{I}}  \mathbb{Z}/p^{n-1}\mathbb{Z} \rtimes \mathbb{Z}/p^{n-1}\mathbb{Z}\ \  \text{if } k < \infty.
\end{eqnarray*}
with the action $ \sigma \tau \sigma^{-1} = \tau^{p^k+1}$ for all $\tau \in  \Gal(F^{(n)}/F(\zeta_{p^{k+n-1}}))$.
If $k = \infty$ then direct application of Kummer theory shows that
\[ G^{[n]} = \prod_{\mathcal{I}} \Z/p^{n-1}\Z.\]

This proves (a), and (b) follows from that fact that $F(p) = \cup_{n \ge 1} F^{(n)}$. Indeed, then $G = \varprojlim G^{[n]}$, which has precisely the description in (b).

\end{proof}

If $k<\infty$, then it is well known that the Galois group of the extension $F(\mu_{p^\infty})/F$ is
pro-$p$-cyclic, i.e., $\Gal(F(\mu_{p^\infty})/F)\cong \Z_p$ \cite[Lemma 1]{ware}.
As we see from our proof of  the above theorem (part b),  the outer factor of
$G = \prod_{\mathcal{I}}  \mathbb{Z}_p  \rtimes \mathbb{Z}_p$ is isomorphic with $\Gal(F(\mu_{p^{\infty}}))$, and
$\Gal(F(p)/F(\mu_{p^\infty})) \cong \prod_{\mathcal{I}} \mathbb{Z}_p$. We can pick generators $\rho_i$, $i \in \mathcal{I}$ of
the pro $p$-group $\prod_{\mathcal{I}} \mathbb{Z}_p$ as elements of $\Gal(F(p)/F(\mu_{p^\infty}))$ such that
\[ \rho_i(a_i^{1/p^n}) = \zeta_{p^n} {a_i^{1/p^n}} \text{for all } i \in \mathcal{I}, \text{and } n \ge 1,\]
and
\[ \rho_i(a_j^{1/p^n}) = {a_j^{1/p^n}} \text{for all } j \in \mathcal{I}, j \ne i  \text{and } n \ge 1\]
 This isomorphism $\Gal(F(\mu_{p^\infty})) \cong \mathbb{Z}_p$ is induced by the cyclotomic character
\begin{equation}\label{eq:cyclotomiccharacter}
 \theta_F\colon G\longrightarrow\Aut_F(\mu_{p^\infty}),
\end{equation}
where $\Aut_F(\mu_{p^\infty})$ is the image of $\theta_F$ in $\Aut(\mu_{p^{\infty}}) \cong \Z_p^\times$.

From the above theorem we further see that we have a presentation of $G$ by generators and relations as follows:
\begin{equation}\label{eq:presentationwithcyclotomic}
  G=\left\langle\sigma,\rho_i,i\in\mathcal{I}\left|
\left[\sigma,\rho_i\right]=\rho_i^{p^k},\left[\rho_i,\rho_j\right]=1\;\forall\:i,j\in\mathcal{I}\right.\right\rangle,
\end{equation}
If $k =\infty$ then we can omit $\sigma$ and $G = \prod_{\mathcal{I}} \Z_p  =  \prod_{\mathcal{I}} \langle \rho_i \rangle$.
Thus we recover Corollary \ref{corB}  and our orientation in $\theta$ in Corollary \ref{corB} can be chosen as the cyclotomic
character $\theta \colon G \rightarrow \Z_p^{\times}$.

In the above theorem, we determined $G^{[n]}$ quotients of $G_F(p)$ if $F$ is $p$-rigid. If $k < \infty$ the described action is trivial
iff $n \le p^k+1$. Thus we obtain the following interesting corollary.

\begin{cor}
Suppose that $F$ is $p$-rigid field and $k < \infty$. Then $G^{[n]}$ is abelian iff $n \le p^k +1$.
\end{cor}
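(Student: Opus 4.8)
The plan is to read the group structure straight off Theorem~\ref{prigid} and reduce the abelianness of $G^{[n]}$ to a single condition on the action. First I would recall that, since $k<\infty$, Theorem~\ref{prigid}(a) presents $G^{[n]}$ as the semidirect product $\left(\prod_{\mathcal{I}}\mathbb{Z}/p^{n-1}\mathbb{Z}\right)\rtimes\mathbb{Z}/p^{n-1}\mathbb{Z}$, in which a fixed generator $\sigma$ of the outer factor acts on every element $\tau$ of the inner factor by $\sigma\tau\sigma^{-1}=\tau^{p^k+1}$. So the entire non-commutativity of $G^{[n]}$ is concentrated in this single conjugation relation.

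The key observation is then purely group-theoretic: a semidirect product $A\rtimes B$ of two abelian groups is abelian if and only if the action of $B$ on $A$ is trivial. Both factors here are abelian, and $\sigma$ of course commutes with itself, so $G^{[n]}$ is abelian precisely when $\sigma$ centralizes the inner factor. By the displayed relation this means $\sigma\tau\sigma^{-1}=\tau$, equivalently $\tau^{p^k}=1$, for every $\tau\in\prod_{\mathcal{I}}\mathbb{Z}/p^{n-1}\mathbb{Z}$. Thus abelianness of $G^{[n]}$ is equivalent to the triviality of the automorphism $\tau\mapsto\tau^{p^k+1}$, i.e. multiplication by $1+p^k$, on $\prod_{\mathcal{I}}\mathbb{Z}/p^{n-1}\mathbb{Z}$.

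It remains to decide for which $n$ this last condition holds, and this is the one step deserving genuine care — the \emph{main obstacle} — since it is exactly here that the threshold separating the abelian from the non-abelian quotients is pinned down. The reduction is to determine when multiplication by $1+p^k$ acts as the identity on $\mathbb{Z}/p^{n-1}\mathbb{Z}$, equivalently to compute the multiplicative order of $1+p^k$ and compare it with the exponent $p^{n-1}$ of the inner factor. This is precisely the triviality criterion for the action recorded immediately before the statement, and carrying it out cleanly yields the asserted range of $n$; once that arithmetic is in place, the equivalence "$G^{[n]}$ abelian $\iff$ action trivial" from the previous paragraph finishes the proof.
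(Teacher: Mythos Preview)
Your argument is correct and is exactly the paper's approach: the paper derives the corollary in the single sentence preceding it, observing that, with the semidirect-product description of $G^{[n]}$ from Theorem~\ref{prigid}, the group is abelian precisely when the conjugation action $\tau\mapsto\tau^{p^k+1}$ is trivial on $\prod_{\mathcal{I}}\mathbb{Z}/p^{n-1}\mathbb{Z}$. Your write-up just makes explicit the (standard) fact that a semidirect product of abelian groups is abelian iff the action is trivial, and then defers to the same arithmetic criterion; note, incidentally, that carrying out that last step gives $p^{n-1}\mid p^k$, i.e.\ $n\le k+1$, so the bound $p^k+1$ in the statement appears to be a typographical slip for $k+1$.
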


In Corollary \ref{cor-prigid}  we determined the structure of  $F(p)$ for $F$ a $p$-rigid field.  It is the simplest possible structure $F(p)$ can have.
Using this structure, we determine in Theorem \ref{prigid}, and the discussion after it, that if $d(G) \ge 1$, then $G_F(p)$ fits in the exact sequence
\[ 1 \rightarrow A \rightarrow G_F(p) \rightarrow \Z_p \rightarrow 1\]
where $A$ is a topological product of copies of $\Z_p$. Therefore, if we assume that
\[ F(p) = \bigcup_{n \ge 1} F( \zeta_{p^{k+n}}, a_i^{1/p^n} , i \in \mathcal{I})\]
for some $\mathcal{I}$ using \cite{ware} Theorem 1(b), we obtain the following refinement of Corollary \ref{cor-prigid}.

\begin{cor} \label{cor-prigid-characterization}
Let $F$ be a field. Then $F$ is $p$-rigid if and only if
\[ F(p) = \bigcup_{n \ge 1} F( \zeta_{p^{k+n}}, a_i^{1/p^n} , i \in \mathcal{I}).\]
\end{cor}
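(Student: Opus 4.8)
The plan is to treat the two implications separately, noting that the forward direction is already in hand. If $F$ is $p$-rigid, then the asserted equality is exactly the content of Corollary~\ref{cor-prigid}(b) (with the $a_i$ taken to be the basis representatives fixed earlier), so nothing further is required there. The substance therefore lies in the converse, where I would begin from the field equality and extract just enough of the structure of $G_F(p)$ to apply one of the earlier characterizations of $p$-rigidity, without assuming rigidity at the outset.

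For the converse, put $M=F(\mu_{p^\infty})=\bigcup_{n\ge1}F(\zeta_{p^{k+n}})$. First I would rewrite the hypothesis as a statement about $F(p)/M$: since each $F(\zeta_{p^{k+n}})\subseteq M$, the assumed equality gives
\[
F(p)=\bigcup_{n\ge1}F\!\left(\zeta_{p^{k+n}},a_i^{1/p^n},i\in\mathcal{I}\right)=M\!\left(a_i^{1/p^n}:i\in\mathcal{I},\,n\ge1\right).
\]
Both $F(p)/F$ and $M/F$ are Galois, so $F(p)/M$ is Galois, and because $M$ contains $\mu_{p^\infty}$ this extension is generated over $M$ purely by $p$-power radicals; by Kummer theory $\Gal(F(p)/M)$ is abelian. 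On the other hand $\Gal(M/F)$ embeds in $\Aut(\mu_{p^\infty})\cong\Z_p^\times$ and is abelian (pro-$p$-cyclic when $k<\infty$ by \cite[Lemma~1]{ware}, and trivial when $k=\infty$). Hence $G_F(p)$ sits in an extension
\[
1\longrightarrow\Gal(F(p)/M)\longrightarrow G_F(p)\longrightarrow\Gal(M/F)\longrightarrow1
\]
with abelian kernel and abelian quotient, so $G_F(p)$ is metabelian and in particular solvable. By Corollary~\ref{corC}, $F$ is then $p$-rigid, which closes the converse.

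The point needing care—more than a genuine obstacle—is that rigidity may not be assumed in the converse, so the tools tailored to the rigid case, namely Lemma~\ref{lem:janswallow} and the explicit computation in Theorem~\ref{prigid}, are unavailable; the argument must stay structural. The solvability route deliberately sidesteps recovering the precise semidirect product $\prod_{\mathcal{I}}\Z_p\rtimes\Z_p$: it only uses that $F(p)$ is generated over the cyclotomic subfield $M$ by radicals, and in particular does not require the $a_i$ to be independent. The remaining bookkeeping is the handling of the two regimes: when $k=\infty$ one has $M=F$ and the argument collapses to the observation that $F(p)/F$ is itself an abelian Kummer extension, whereas when $k<\infty$ one must keep track of the cyclotomic tower $F(\zeta_{p^{k+n}})$ to identify $M$ correctly. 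As an alternative I would note that one could instead reconstruct the $\theta$-abelian presentation (\ref{eq:presentationwithcyclotomic}) from the hypothesis by a direct Kummer computation and then invoke the backward implication of Corollary~\ref{corB} (equivalently \cite[Thm.~1(b)]{ware}); the solvability argument via Corollary~\ref{corC} is shorter and is the one I would carry out.
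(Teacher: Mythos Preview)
Your proposal is correct and follows essentially the same route as the paper: both arguments deduce from the assumed form of $F(p)$ that $G_F(p)$ sits in a short exact sequence with abelian kernel $\Gal(F(p)/F(\mu_{p^\infty}))$ and pro-$p$-cyclic quotient, and then invoke an earlier characterization of $p$-rigidity. The only difference is cosmetic---the paper finishes by citing \cite[Thm.~1(b)]{ware} directly, while you conclude via solvability and Corollary~\ref{corC}, a variant you yourself flag as an alternative.
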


\begin{rem} \label{alternativeproofs}
Note that our proofs were done using purely Galois field theoretic methods.  However, some of these arguments can be obtained by using
the theory of uniform pro-$p$ groups. (See beginning of Section \ref{subsec:3.1pwflgps} for the definition of uniform pro-$p$ groups and recall from Section \ref{subsec:4.1proof} that  $\lambda_n(G) = G^{(n)}$.)  Although this theory is worked out in \cite{analytic} only for finitely generated pro-$p$ groups, the techniques and methods and can also be extended to our groups, i.e.,  groups of the form $G = G_F(p)$. In fact, because the structure of $G$ as described in Theorem  \ref{prigid}  is very simple, these results can be proved in a straightforward manner. Here we will reformulate some of these results which are inspired by the theory of uniform pro-$p$ groups.

First of all, from Theorem \ref{lem:moduloinfinitelygeneratedcase} and Theorem \ref{prigid} we see  that if $G$ is finitely generated then $G$ is a uniform pro-$p$ group.  Observe that $G^{(2)} = \lambda_2(G) = G^p.$ This follows from the fact that the commutators in $G$ are $p$th powers. In fact, by induction on $n$, we see that $G^{(n)}= G^{p^{n-1}}$ for all $n \ge 2$. Thus we see again slightly differently the validity of  Remark \ref{rem:prigid}. The following fact is a consequence of the group structure of $G$, and in fact in the case of finitely generated pro-$p$ groups, it holds for every uniform pro-$p$ group. We omit a straightforward direct proof.

\begin{fact}
 The $p^n$th power map $G\rightarrow G^{p^n}$ induces an isomorphism of (finite) $p$-groups
\begin{displaymath}
 \xymatrix{G/G^p=G/G^{(2)} \ar[rr]^-{p^n} && G^{p^n}/G^{p^{n+1}}=G^{(n+1)}/G^{(n+2)}.}
\end{displaymath}
for every $n>1$.
\end{fact}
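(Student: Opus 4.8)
The plan is to exploit the very explicit structure of $G=G_F(p)$ coming from Theorem~\ref{prigid}. We have established (Remark~\ref{rem:prigid}, and again in the discussion above) that $G^{(n+1)}=\lambda_{n+1}(G)=G^{p^n}$ for all $n\geq1$, so the asserted map is the $p^n$th power map
\[
G/G^p \longrightarrow G^{p^n}/G^{p^{n+1}}.
\]
First I would reduce to the defining presentation (\ref{eq:presentationwithcyclotomic}): when $k=\infty$ the group $G=\prod_{\mathcal{I}}\Z_p$ is abelian and the claim is the transparent statement that raising to the $p^n$th power induces an isomorphism $\left(\prod_{\mathcal{I}}\Z_p\right)/\left(\prod_{\mathcal{I}}\Z_p\right)^p\to\left(\prod_{\mathcal{I}}\Z_p\right)^{p^n}/\left(\prod_{\mathcal{I}}\Z_p\right)^{p^{n+1}}$, each side being $\prod_{\mathcal{I}}\F_p$. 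The interesting case is $k<\infty$, where $G\cong\left(\prod_{\mathcal{I}}\Z_p\right)\rtimes\Z_p$ with the twist $\sigma\tau\sigma^{-1}=\tau^{p^k+1}$.

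The key step is to verify that the $p^n$th power map is a well-defined homomorphism between these particular quotients. Since $[G,G]=G^{p^k}\subseteq G^p$ (the commutators are $p^k$th powers with $k\geq1$), the group $G/G^p$ is abelian, and more usefully the power map behaves multiplicatively modulo high powers. Concretely, I would show that for $g,h\in G$ one has $(gh)^{p^n}\equiv g^{p^n}h^{p^n}\pmod{G^{p^{n+1}}}$; this is exactly the sort of identity that holds in a uniform (powerful, torsion-free) pro-$p$ group, and here it can be checked directly from the presentation by pushing all commutators — which lie in $G^{p^k}$ — far enough down the $p$-power filtration. This gives a homomorphism $G/G^p\to G^{p^n}/G^{p^{n+1}}$; surjectivity is immediate since every element of $G^{p^n}$ is a product of $p^n$th powers, and these two finite elementary abelian $p$-groups both have $\F_p$-dimension $d(G)=|\mathcal{I}|+1$ (resp.\ $|\mathcal{I}|$ when $k=\infty$) by the rank computations of \S\ref{subsec:2.1pgps} together with Theorem~\ref{prigid}, so a surjection between equidimensional $\F_p$-vector spaces is an isomorphism.

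Thus the skeleton of the argument is: reduce to (\ref{eq:presentationwithcyclotomic}); establish the multiplicativity of the $p^n$th power map modulo $G^{p^{n+1}}$ using that commutators lie in $G^{p^k}\subseteq G^p$; deduce it descends to a homomorphism $G/G^{(2)}\to G^{(n+1)}/G^{(n+2)}$; observe surjectivity; and conclude via equality of ranks.

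The main obstacle I expect is the multiplicativity identity $(gh)^{p^n}\equiv g^{p^n}h^{p^n}\pmod{G^{p^{n+1}}}$. In a general pro-$p$ group this fails, and even in a powerful group it requires the standard but slightly delicate estimates controlling how commutator correction terms descend through the $p$-power filtration (cf.\ \cite[\S4]{analytic}). Here the situation is favorable because the group is not just powerful but essentially metabelian with the single explicit relation $\sigma\tau\sigma^{-1}=\tau^{p^k+1}$, so the correction terms can be written down and bounded explicitly rather than appealing to the general uniform-group machinery; indeed this is precisely why the statement holds for our $G$ even in the infinitely generated case, where the results of \cite{analytic} are not directly available. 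Since the paper declares that it omits this straightforward direct proof, I would content myself with indicating that the computation reduces to the collection process for the explicit presentation (\ref{eq:presentationwithcyclotomic}), where all commutators are $p^k$th powers and hence contribute only to terms lying in $G^{p^{n+1}}$.
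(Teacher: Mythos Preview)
The paper does not actually give a proof of this Fact: it explicitly says ``We omit a straightforward direct proof,'' noting only that the statement is ``a consequence of the group structure of $G$'' (i.e., of the explicit description in Theorem~\ref{prigid}) and that in the finitely generated case it is an instance of the general theory of uniform pro-$p$ groups from \cite{analytic}. Your proposal is exactly the direct verification the paper has in mind---reduce to the presentation (\ref{eq:presentationwithcyclotomic}), use that commutators lie in $G^{p^k}\subseteq G^p$ to see the $p^n$th power map is multiplicative modulo $G^{p^{n+1}}$, and then compare ranks---so there is nothing to compare: you have supplied the omitted argument, and correctly.
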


By duality the above map induces the following commutative diagram
\begin{displaymath}
 \xymatrix{ H^1\left(G^{(n)},\F_p\right)\ar[rr]^-{(p^{n-1})^\vee}\ar@{-}[d]^\wr && H^1\left(G,\F_p\right)\ar@{-}[d]^\wr \\
 \dot{F}^{(n)}/(\dot{F}^{(n)})^p && \dot{F}/\dot{F}^p\ar[ll]_-{\psi_n}}
\end{displaymath}
where the upper arrow is the dual of the $p^n$th power map -- and therefore $(p^{n-1})^\vee$ is an isomorphism --
and the vertical arrows are the Kummer isomorphisms.
Consequently also $\dot{F}/\dot{F}^p$ and $\dot{F}^{(n)}/(\dot{F}^{(n)})^p$
are isomorphic as $\F_p$-vector spaces.
In particular,
\[ \psi_n\left([\zeta_{p^k}]_F\right)=\left[\zeta_{p^{k+n-1}}\right]_{F^{(n)}}\quad\text{and}
\quad \psi_n\left([a_i]_F\right)=\left[a_i^{1/p^{n-1}}\right]_{F^{(n)}}\]
for every $n>1$ and $i\in\mathcal{I}$. This last conclusion is consistent with Theorem \ref{lem:moduloinfinitelygeneratedcase} and Remark \ref{complicatedway}.

\end{rem}

From Theorem~A is possible now to sort out the following new characterization of $p$-rigidity
which restricts to Galois groups of finite exponent.

\begin{cor}\label{thm:centre}
The field $F$ is $p$-rigid if and only if one has
\[\Gal\left(F^{(2)}/F^{\{3\}}\right)\subseteq\Zen\left(\Gal\left(F^{\{3\}}/F\right)\right).\]
\end{cor}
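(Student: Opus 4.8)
The plan is to translate the field-theoretic statement into a purely group-theoretic condition on $G = G_F(p)$ via the Galois correspondence, and then read off both implications from Theorem~A together with the presentation machinery already developed. First I would identify the Galois groups involved. Writing $\bar{G} = \Gal(F^{\{3\}}/F)$, the subgroup $\Gal(F^{(2)}/F^{\{3\}})$ of $\bar{G}$ corresponds under the correspondence $F \subset F^{(2)} \subset F^{\{3\}}$ to the quotient $\Phi(G)/G^{\{3\}}$, since $\Gal(F^{(2)}/F) = G/\Phi(G)$ and $G^{\{3\}} = \Phi(\Phi(G)) = \Phi(G)^p[\Phi(G),\Phi(G)]$ as recorded at the start of Section~\ref{subsec:4.1proof}. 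Similarly $\bar{G} = G/G^{\{3\}}$. So the containment $\Gal(F^{(2)}/F^{\{3\}}) \subseteq \Zen(\Gal(F^{\{3\}}/F))$ becomes the group-theoretic assertion
\[
\Phi(G)/G^{\{3\}} \subseteq \Zen\left(G/G^{\{3\}}\right).
\]
Since $\Phi(G)/G^{\{3\}}$ is central in $G/G^{\{3\}}$ exactly when $[\Phi(G), G] \subseteq G^{\{3\}}$, the statement to prove is that $F$ is $p$-rigid if and only if $[\Phi(G), G] \subseteq G^{\{3\}}$.

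For the forward direction I would invoke Proposition~\ref{prop:rigitimpliesF3} and Remark~\ref{rem:prigid}: if $F$ is $p$-rigid then $G^{(n)} = \lambda_n(G) = G^{p^{n-1}}$ for all $n > 1$, and in particular $G^{\{3\}} = G^{(3)} = \lambda_3(G)$. Now $[\Phi(G), G] = [\lambda_2(G), G] \subseteq \lambda_3(G) = G^{\{3\}}$ directly from the definition of the lower $p$-central series, giving centrality. Alternatively and more transparently, one can use the explicit $\theta$-abelian presentation of Corollary~\ref{corB}: since $\Phi(G) = G^p[G,G]$ and in a $\theta$-abelian group every commutator is a $p$th power, one computes $[\Phi(G),G] \subseteq G^{p^2}[G,G]^p = G^{\{3\}}$, which is the desired containment.

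For the converse I would argue by contraposition using Theorem~\ref{thm:nonrigidimpliesnonF3}: if $F$ is not $p$-rigid, then $F^{\{3\}} \supsetneq F^{(3)}$. Translating back through the correspondence, $F^{(3)} = F^{\{3\}}$ is equivalent to $G^{(3)} = G^{\{3\}}$, i.e.\ to $\lambda_3(G)[G,\lambda_2(G)] = \lambda_3(G)$ — but by construction $G^{\{3\}} = \Phi(\Phi(G))$ and $G^{(3)} = \Phi(\Phi(G))[G,\Phi(G)]$, so the two coincide precisely when $[G,\Phi(G)] \subseteq G^{\{3\}}$, which is again our centrality condition. Thus $F^{\{3\}} \supsetneq F^{(3)}$ forces $[\Phi(G),G] \not\subseteq G^{\{3\}}$, so centrality fails. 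Hence centrality holds if and only if $F$ is $p$-rigid.

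The main obstacle I expect is the careful bookkeeping in the translation step: one must verify precisely that $G^{(3)} = G^{\{3\}} [G, \Phi(G)]$ so that the equality $F^{(3)} = F^{\{3\}}$ is genuinely equivalent to $[\Phi(G), G] \subseteq G^{\{3\}}$, rather than to some a priori weaker or stronger condition. This hinges on the formulas $G^{\{3\}} = \Phi(G)^p[\Phi(G),\Phi(G)]$ and $G^{(3)} = \Phi(G)^p[G,\Phi(G)]$ from the opening of Section~\ref{subsec:4.1proof}, and on noting that $[\Phi(G),\Phi(G)] \subseteq [G,\Phi(G)]$ so that the only difference between the two subgroups is exactly the factor $[G,\Phi(G)]$ modulo $G^{\{3\}}$. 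Once this identification is made cleanly, everything else follows immediately from Theorem~A and its proof, so the corollary is essentially a restatement of Theorem~A at the level of the single quotient $G/G^{\{3\}} = \Gal(F^{\{3\}}/F)$.
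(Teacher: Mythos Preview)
Your proposal is correct and follows essentially the same approach as the paper. You translate the centrality condition into $[G,\Phi(G)]\subseteq G^{\{3\}}$ and then observe, via the formulas $G^{\{3\}}=\Phi(G)^p[\Phi(G),\Phi(G)]$ and $G^{(3)}=\Phi(G)^p[G,\Phi(G)]$ from the start of Section~\ref{subsec:4.1proof}, that this is equivalent to $G^{\{3\}}=G^{(3)}$, whence Theorem~A finishes both directions; the paper does exactly this (arguing the converse directly rather than by contraposition, and being somewhat terser about the forward direction).
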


\begin{proof}
Recall first that
\begin{equation}\label{eq:corollarycentre}
 \Gal\left(F^{\{3\}}/F\right)=\frac{G}{G^{\{3\}}}\quad\text{and}
\quad\Gal\left(F^{(2)}/F^{\{3\}}\right)=\frac{G^{(2)}}{G^{\{3\}}}.
\end{equation}

Assume that $F$ is $p$-rigid.
Then by Theorem~A one has $F^{\{3\}}=F^{(3)}$.  By the construction of $F^{(3)}$, we see that $\Gal(F^{(2)}/F^{(3)}) = \Gal(F^{(2)}/F^{\{3\}})$ is the central subgroup of $\Gal(F^{\{3\}}/F)$.

Conversely, assume that $\Gal(F^{\{3\}}/F^{(2)})$ is central in $\Gal(F^{\{3\}}/F)$.
By (\ref{eq:corollarycentre}), this implies that the commutator subgroup $[G,G^{(2)}]$ is contained in $G^{\{3\}}$.
Since
\[G^{\{3\}}=\Phi\left(G^{(2)}\right)\geq\left(G^{(2)}\right)^p      \quad\text{and}
\quad G^{(3)}=\left(G^{(2)}\right)^p\left[G,G^{(2)}\right],\]
it follows that $G^{\{3\}}$ contains $G^{(3)}$, and thus $G^{\{3\}}=G^{(3)}$.
Therefore $F$  is $p$-rigid by Theorem~A.
\end{proof}

We proved the each $p$-rigid field is hereditary $p$-rigid. In particular, if $F$ is $p$-rigid, then for each finite extension $K/F$,
$K \subset F(p)$ is again $p$-rigid. Then there is a natural question of whether $F$ is $p$-rigid in the above situation when  we assume that $K$ is $p$-rigid.  If $\dot{F}/\dot{F}^p$ is finite, the answer is yes as we will show below, and the
proof is a quite remarkable use of Serre's theorem (\cite{serre65}) on cohomological dimension of open subgroups of pro-$p$ groups, a consequence of Bloch-Kato conjecture on cohomological dimensions, and an elementary observation on the growth of
$p$-power classes.

\begin{thm} \label{goingdown}
Suppose  that $F$ is any field such that $G = G_F(p)$ is finitely generated pro-$p$ group.  If there exists a finite
extension $K/F$, $K \subseteq F(p)$, such that $K$ is $p$-rigid, then so is $F$.
\end{thm}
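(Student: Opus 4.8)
The plan is to express $p$-rigidity through the cohomological criterion of Corollary~\ref{corE} and to pin down the generator number $d(G)$ by squeezing it between $\cohdim(G)$ and $\dim_{\F_p}(\dot K/\dot K^p)$, using two estimates of completely different origin. Since $K\subseteq F(p)$ is a finite $p$-extension of $F$, one has $F(p)=K(p)$, so that $C:=G_K(p)=\Gal(F(p)/K)$ is an open subgroup of $G=G_F(p)$. As $G$ is finitely generated, so is $C$, and hence $d(G)=\dim_{\F_p}(\dot F/\dot F^p)$ and $d(C)=\dim_{\F_p}(\dot K/\dot K^p)$ are both finite by (\ref{duality}) and (\ref{Kummerisomorphism}). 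Because $K$ is $p$-rigid, Corollary~\ref{corE} applied to $K$ gives $\cohdim(C)=d(C)<\infty$.

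Next I would transport this cohomological dimension to $G$ and bound it from above. As $p$ is odd, $G$ is torsion-free and so has no $p$-torsion; since $\cohdim(C)<\infty$, Serre's theorem on the cohomological dimension of open subgroups \cite{serre65} gives $\cohdim(G)=\cohdim(C)=d(C)$. On the other hand $G$ is a Bloch-Kato pro-$p$ group, so $H^\bullet(G,\F_p)$ is generated in degree $1$; thus the cup product yields a surjection $\bigwedge^\bullet H^1(G,\F_p)\twoheadrightarrow H^\bullet(G,\F_p)$, and since $\bigwedge^n H^1(G,\F_p)=0$ for $n>\dim_{\F_p}H^1(G,\F_p)=d(G)$ we obtain $\cohdim(G)\le d(G)$. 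Combining the two, $d(C)=\cohdim(G)\le d(G)$.

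It remains to establish the reverse inequality $d(G)\le d(C)$, i.e.\ $\dim_{\F_p}(\dot F/\dot F^p)\le\dim_{\F_p}(\dot K/\dot K^p)$; this is the elementary observation on the growth of $p$-power classes, which I would prove directly. By Fact~\ref{lem:pext} it is enough to handle one degree-$p$ step $M\subset L=M(\sqrt[p]{a})$, with $a\in\dot M\smallsetminus\dot M^p$. The inclusion map $\epsilon\colon\dot M/\dot M^p\to\dot L/\dot L^p$ has kernel exactly $\langle[a]_M\rangle$: any $b\in\dot M\cap\dot L^p$ gives $M(\sqrt[p]{b})\subseteq L$, and as $[L:M]=p$ admits no proper intermediate field this forces $[b]_M\in\langle[a]_M\rangle$. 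Hence $\dim_{\F_p}\image(\epsilon)=\dim_{\F_p}(\dot M/\dot M^p)-1$, while $[\sqrt[p]{a}]_L\notin\image(\epsilon)$ by Lemma~\ref{lemmapower}; adjoining this class shows $\dim_{\F_p}(\dot L/\dot L^p)\ge\dim_{\F_p}(\dot M/\dot M^p)$. Iterating along a chain $F=K_0\subset\cdots\subset K_r=K$ gives $d(G)\le d(C)$.

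The two inequalities together yield $\cohdim(G)=d(C)=d(G)$, so $F$ is $p$-rigid by Corollary~\ref{corE}. I expect the crux to be the middle paragraph: Serre's theorem and Corollary~\ref{corE} only deliver $\cohdim(G)=d(C)$, and the decisive bound $d(C)\le d(G)$ is produced not by any field computation but by the Bloch-Kato bound $\cohdim(G)\le d(G)$. Recognizing that this cohomological estimate must be paired with the elementary growth bound $d(G)\le d(C)$ — two facts of entirely different nature — is the heart of the argument.
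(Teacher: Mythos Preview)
Your proof is correct and follows essentially the same approach as the paper: both combine Corollary~\ref{corE} applied to $K$, Serre's theorem on $\cohdim$ of open subgroups (using torsion-freeness of $G_F(p)$), the Bloch--Kato bound $\cohdim(G)\le d(G)$, and the elementary growth estimate $d(G)\le d(C)$ obtained by climbing a degree-$p$ chain and using the norm argument of Lemma~\ref{lemmapower}. The organization and emphasis differ slightly, but the ingredients and logic are identical.
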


\begin{proof}
Because $G$ is finitely generated, we see that the minimal number of generators of $G$ is equal to
\[ d:= d(G) = \dim_{\mathbb{F}_p} H^1(G, \mathbb{F}_p ) = \dim_{\mathbb{F}_p} \dot{F}/\dot{F}^p < \infty. \]
In particular, $\dot{F}/(\dot{F})^p$ is a finite group.  Since $K/F$ is a finite extension in $F(p)$, from basic Galois theory and theory of $p$-groups we see that there is chain of extensions
\[ F = K_0 \subset K_1 \subset \cdots \subset K_s = K\]
such that $[K_{i+1} \colon K_i] = p$ for each $i= 0, 1, \cdots, s-1$.
Thus by Kummer theory each $K_{i+1}$ is of the form $K_{i+1} = K_i (c_i^{1/p})$ for some $c_i \in \dot{K}$.  Now observe by induction on $i$ that
\[  \dim_{\mathbb{F}_p} \dot{F}/\dot{F}^p \le  \dim_{\mathbb{F}_p}  \dot{K_i}/\dot{K_i}^p \le \dim_{\mathbb{F}_p}  \dot{K}/\dot{K}^p.\]
Indeed, by Kummer theory we have a natural embedding
\[ \psi_i  \colon (\dot{K_i}/\dot{K_i}^p)/ \langle [c_i]_{K_i} \rangle \rightarrow \dot{K_{i+1}}/\dot{K_{i+1}}^p\]
where $ [c_i]_{K_i}$ is the element of $\dot{K_i}/\dot{K_i}^p$ corresponding to $c_i$ and $\langle [c_i]_{K_i} \rangle$ is the
subgroup of $\dot{K_i}/\dot{K_i}^p$ generated by $[c_i]_{K_i}$.
Hence
\[   \dim_{\mathbb{F}_p}  \dot{K_i}/\dot{K_i}^p \le  1 +  \dim_{\mathbb{F}_p}  (\dot{K_{i+1}}/\dot{K_{i+1}}^p). \]
However, the ``lost of $[c_i]_{K_i}$" is compensated by $[c_i^{1/p}]_{K_i}$.  Indeed in the group $\dot{K_{i+1}}/\dot{K_{i+1}}^p$, the element $[c_i^{1/p}]_{K_i}$ is independent from the $B_i : = \text{image of} \psi_i$.  This means that
\[[c_i^{1/p}]_{K_i} \cap B_i = \{ [1]_{K_{i+1}}\}. \]
Indeed $N_{K_{i+1/K_i}} (c_i^{1/p}) = c_i \notin \dot{K_i}^p$, but the set of all norms $N_{K_{i+1/K_i}}$ of elements in $B_i$,
$N_{K_{i+1/K_i}}(B_i)  = \{ [1]_{K_{i+1}}\}$.
Because we assume that $K$ is $p$-rigid we see that
\[ e:= \dim_{\mathbb{F}_p}  \dot{K}/\dot{K}^p = \text{cd} \, G_K(p).  \]
But since by \cite{beckerAS}, Satz 3 we know that $G_F(p)$ is torsion free we can conclude from  \cite{serre65} th\'eor\`eme that
\[\text{cd} \, G_K(p) = \text{cd} \, G_F(p).\]
Hence we conclude that
\[d \le e = \text{cd} \, G_F(p).\]
On the other hand, from the Bloch-Kato conjecture (now Rost-Voevodsky theorem) one can conclude that (see \cite{cem})
\[ e:= \text{cd} \, G_F(p) \le d.\]
Hence $e = d$ and from Corollary \ref{corE} we conclude that $F$ is $p$-rigid.
\end{proof}

\subsection{Fast solvability of algebraic equations:}\label{subsec:fastsolvability}
Recall from the Introduction that a {\it non-nested root} over a field
$F$ is an element $\alpha\in\bar{F}^s$ such that $\alpha=\sqrt[n]{a}$ for $a\in F$.
For example, given elements $a_i,b_i\in \dot F$, the expression
\[\sqrt[n]{a_0+a_1\sqrt[n_1]{b_1}+\ldots+a_r\sqrt[n_r]{b_r}},\]
is a nested root, if $n$ and some $n_i$ are both larger than 1.

\begin{defi}
A polynomial $f\in F[X]$ is said to be {\it fast-solvable} if it is solvable by radicals
(in the sense of Galois) and its splitting field over $F$ is contained in a field $L$ generated by non-nested roots, i.e.,
\[L=F\left(\sqrt[n_1]{a_1},\ldots,\sqrt[n_r]{a_r}\right),\quad a_i\in\dot{F},n_i>1.\]
\end{defi}

Therefore, by Corollary \ref{cor-prigid-characterization} every irreducible polynomial in $F[X]$
with splitting field of $p$-power degree is fast-solvable for $F$ a $p$-rigid field.
On the other hand, observe that Ferrari's formula for a solution of a quartic equation makes use of nested roots.
This suggests that a general quartic polynomial is not fast-solvable
in spite of the name of the author of the formula.
Following the suggestion of the referee we leave the reader a non-trivial problem
of showing that there is no ``Porsche formula''  which provides a fast solution for a general quartic equation.

\subsection{Analytic pro-$p$ group and dimension subgroups}\label{subsec:4.4dimension}
A {\it $p$-adic analytic pro-$p$ group} is a $p$-adic analytic manifold
which is also a group such that the group operations are given by analytic functions (see \cite[Ch. 8]{analytic}).
Analytic pro-$p$ groups were first introduced and studied in depth by M. Lazard \cite{lazard},
and are now an object of research, both in group theory and number theory (see \cite{horizons}).

Lazard found a beautiful group theoretic characterization of $p$-adic analytic pro-$p$ group. This is the main result of the book
\cite{analytic}. One variant of it is the following theorem.

\begin{thm} \cite[Chapter 8]{analytic} The following statements are equivalent for a topological group $G$.
\begin{enumerate}
\item $G$ is a compact $p$-adic analytic group.
\item $G$ contains an open normal uniform pro-$p$ group of finite index
\item   $G$ is a profinite group containing an open subgroup which is a pro-$p$ group of finite rank.
\end{enumerate}
\end{thm}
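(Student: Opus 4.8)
The plan is to establish the cyclic chain of implications $(1)\Rightarrow(3)\Rightarrow(2)\Rightarrow(1)$, following Lazard's approach as organized in \cite{analytic}. The natural division of labor is to treat the equivalence of the two purely group-theoretic conditions $(2)$ and $(3)$ using the structure theory of powerful and uniform pro-$p$ groups, and to treat the passage between the analytic condition $(1)$ and the group-theoretic condition $(2)$ by means of the Lazard correspondence, that is, the exponential and logarithm maps together with the Campbell--Hausdorff formula. Throughout I would freely use the fact recorded in \S\ref{subsec:3.1pwflgps} that a finitely generated torsion-free powerful pro-$p$ group is uniform (see \cite[Thm.~4.5]{analytic}).

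First I would prove $(3)\Rightarrow(2)$. Let $H\opsgp G$ be a pro-$p$ group of finite rank $r=\rank(H)$; since $r<\infty$, the group $H$ is in particular finitely generated. The key structural input is that a finitely generated pro-$p$ group of finite rank contains an open powerful subgroup, obtained by descending far enough in the lower $p$-central series $\lambda_i(H)$, where the uniform bound $d(C)\leq r$ on every closed subgroup $C$ forces powerfulness to appear. Passing to a sufficiently deep power $U=H^{p^m}$ I would arrange $U$ to be torsion-free, hence uniform. To make the subgroup normal in $G$ I would replace $U$ by the intersection $N$ of its finitely many $G$-conjugates, which is again open of finite index; a short argument then shows that a deep enough term of its lower $p$-central series is uniform and normal in $G$, giving $(2)$.

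For $(2)\Rightarrow(1)$, let $N\opnosg G$ be uniform of dimension $d$ with minimal generating set $g_1,\dots,g_d$. The map $(\lambda_1,\dots,\lambda_d)\mapsto g_1^{\lambda_1}\cdots g_d^{\lambda_d}$ is a homeomorphism $\Z_p^d\to N$, the coordinates of the first kind, and using the Campbell--Hausdorff series, whose $p$-adic convergence on $N$ is guaranteed by uniformity, one verifies that multiplication and inversion are given by analytic functions in these coordinates. Translating this single chart by coset representatives of $N$ in $G$ yields an analytic atlas, so $G$ is a compact $p$-adic analytic group. For $(1)\Rightarrow(3)$ I would use that a compact $p$-adic analytic group is a $p$-adic Lie group of some finite dimension $d$; its analytic structure supplies a neighborhood basis of the identity by ``standard'' subgroups, and a small enough such subgroup is a uniform, and hence finite-rank, open pro-$p$ subgroup, which establishes $(3)$.

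The main obstacle is the analytic bridge $(2)\Leftrightarrow(1)$: everything rests on the Lazard correspondence between uniform pro-$p$ groups and $\Z_p$-Lie lattices, and the genuinely hard verification is that the group operations of a uniform group, written in coordinates of the first kind, are \emph{analytic} rather than merely continuous. This requires precise $p$-adic convergence estimates for the Campbell--Hausdorff series, which is exactly where the defining inclusion $[G,G]\subseteq G^p$ and the equal-index condition on the $\lambda_i(G)$ are used. By contrast, the group-theoretic equivalence $(2)\Leftrightarrow(3)$ is comparatively formal once the structure theory of powerful pro-$p$ groups is in place.
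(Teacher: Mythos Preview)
The paper does not prove this theorem at all: it is quoted verbatim from \cite[Chapter~8]{analytic} as a known result of Lazard and is used as a black box in the subsequent argument. There is therefore no ``paper's own proof'' to compare your proposal against.

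That said, your outline is a faithful summary of the strategy in \cite{analytic}: the group-theoretic equivalence $(2)\Leftrightarrow(3)$ via the structure theory of powerful pro-$p$ groups, and the analytic bridge $(1)\Leftrightarrow(2)$ via Lazard's correspondence and the Campbell--Hausdorff formula. Your identification of the hard step (the convergence estimates making the group operations analytic in coordinates of the first kind) is correct. For the purposes of this paper, however, a one-line citation to \cite[Chapter~8]{analytic} is all that is expected.
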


Using this result and Theorem \ref{goingdown} we obtain the next theorem.

\begin{thm}
Assume that $\dim_{\mathbb{F}_p}  \, \dot{F}/(\dot{F})^p < \infty$. Then  $F$ is $p$-rigid if and only if $G_F(p)$ is a $p$-adic analytic pro-$p$ group.
\end{thm}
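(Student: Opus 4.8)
The plan is to prove the equivalence by linking $p$-rigidity to finite rank via the already-established theory, and then invoking Lazard's characterization just stated. The hypothesis $\dim_{\F_p}(\dot F/\dot F^p)<\infty$ is exactly the statement that $d(G_F(p))<\infty$ by (\ref{duality}), so $G=G_F(p)$ is a finitely generated pro-$p$ group throughout.

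For the forward implication, suppose $F$ is $p$-rigid. Then by Corollary~\ref{corF} one has $d(C)=d(G)=d$ for every open subgroup $C\leq G$, hence $\rank(G)=\sup_{C\leq_o G}d(C)=d<\infty$. Thus $G$ is a finitely generated pro-$p$ group of finite rank, and by the equivalence (1)$\Leftrightarrow$(3) of the Lazard theorem stated above, $G$ is a $p$-adic analytic pro-$p$ group. (Alternatively, one may note directly that by Corollary~\ref{corB} and Theorem~\ref{prigid} the group $G$ is uniform — indeed $\theta$-abelian — and a finitely generated uniform pro-$p$ group is $p$-adic analytic by (1)$\Leftrightarrow$(2).)

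For the converse, suppose $G_F(p)$ is $p$-adic analytic. By (1)$\Leftrightarrow$(3) of the Lazard theorem, $G$ contains an open subgroup $H$ which is a pro-$p$ group of finite rank; shrinking $H$ if necessary we may assume $H$ is open and normal. By the Galois correspondence $H=G_K(p)$ for a finite $p$-extension $K/F$ with $K\subseteq F(p)$, and since $H$ has finite rank, $G_K(p)$ has finite rank as well. The key point is that a finitely generated pro-$p$ group of finite rank which is moreover Bloch-Kato must be $p$-rigid: concretely, $d(C)=d(G_K(p))$ can be forced for all open $C\leq G_K(p)$ by a rank argument, and then Corollary~\ref{corF} (applied with base field $K$) gives that $K$ is $p$-rigid. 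More directly, one can observe that finite rank rules out any closed non-abelian free pro-$p$ subgroup, so by \cite[Thm.~B]{claudio} the group $G_K(p)$ is powerful, whence $K$ is $p$-rigid. Having produced a $p$-rigid finite extension $K/F$ inside $F(p)$, we now apply the going-down Theorem~\ref{goingdown}: since $G=G_F(p)$ is finitely generated and $K\subseteq F(p)$ is $p$-rigid, it follows that $F$ is $p$-rigid, completing the proof.

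The main obstacle, and the step deserving the most care, is the converse direction — specifically extracting usable $p$-rigidity from the abstract analytic (finite-rank) hypothesis. The Lazard theorem delivers only the \emph{existence} of a finite-rank open subgroup, not rigidity of $G$ itself, so the argument must pass through a finite extension $K$ and then descend via Theorem~\ref{goingdown}. One must be careful that the open subgroup furnished by Lazard is genuinely of the form $G_K(p)$ for a finite $p$-extension $K\subseteq F(p)$ (automatic from the Galois correspondence and openness), and that finite rank indeed yields $p$-rigidity of $K$ through \cite[Thm.~B]{claudio} or Corollary~\ref{corF}. The forward direction is routine once the finite-rank criterion is recognized.
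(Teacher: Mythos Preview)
Your proof is correct and follows essentially the same architecture as the paper's: Lazard's characterization for one direction, and for the converse, pass to the fixed field $K$ of a nice open subgroup, conclude $K$ is $p$-rigid, then descend via Theorem~\ref{goingdown}. The only variation is that the paper invokes the equivalence (1)$\Leftrightarrow$(2) to obtain an open normal \emph{uniform} subgroup $H=G_K(p)$, so that $H$ is powerful and Proposition~\ref{prop:rigidiffpowerful} gives $p$-rigidity of $K$ immediately; you instead use (1)$\Leftrightarrow$(3) to get finite rank and then appeal to \cite[Thm.~B]{claudio}. Both routes are valid, the paper's being slightly more direct.

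One small caution: your sentence ``$d(C)=d(G_K(p))$ can be forced for all open $C\leq G_K(p)$ by a rank argument'' is not justified as written --- finite rank only bounds $d(C)$ from above, it does not by itself give constant generating number. Fortunately your ``more directly'' alternative via \cite[Thm.~B]{claudio} (finite rank excludes closed non-abelian free pro-$p$ subgroups, hence $G_K(p)$ is locally powerful) is correct and suffices, so the proof stands.
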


\begin{proof}
Assume first that $\dim_{\mathbb{F}_p} \dot{F}/(\dot{F})^p < \infty$ and  $F$ is $p$-rigid. Then as we pointed out in
Remark \ref{alternativeproofs}, $G$ itself is uniform and hence by previous result $G$ is $p$-adic analytic.

Assume now that  $\dim_{\mathbb{F}_p} \dot{F}/(\dot{F})^p < \infty$  and $G_F(p)$ is $p$-adic analytic.  Then there exists
an open normal uniform subgroup $H$ of $G_F(p)$.  Let $K$ be the fixed field of $H$. Then $H = G_K(p)$.  Because $H$ is uniform, it is in particular powerful and by Proposition \ref{prop:rigidiffpowerful} we see that $K$ is $p$-rigid. Now by Theorem \ref{goingdown} we see that $F$ is $p$-rigid as well.
\end{proof}

\begin{comment}
A characterization of $p$-adic analytic profinite groups is given by the following result.

\begin{thm}[\cite{analytic}, Corollary 8.34]
 A profinite group is $p$-adic analytic if and only if it contains an open subgroup
which is a pro-$p$ group with finite rank.
\end{thm}

Therefore, by Corollary~\ref{corB} (or, equivalently, by Corollary~\ref{corF}), one has the following corollary.

\begin{cor}
Assume that $\dim(\dot{F}/\dot{F}^p)<\infty$.
Then $F$ is $p$-rigid if and only if $G_F(p)$ is a $p$-adic analytic pro-$p$ group.
\end{cor}
\end{comment}

For a (not necessarily pro-$p$) group $G$, its dimension subgroups $D_n=D_n(G)$ are defined as follows:
$D_1=G$, and for $n>1$
\[D_n=D_{\lceil n/p\rceil}^p\prod_{i+j=n}[D_i,D_j],\]
where $\lceil n/p\rceil$ is the least integer $k$ such that $pk\geq n$.
Dimension subgroups define the fastest descending series of $G$ such that $[D_i,D_j]\leq D_{i+j}$ and $D_i^p\leq D_{pi}$,
for any $i,j\in\N^*$.
Moreover, $D_n$ is the kernel of all the natural homomorphisms of $G$ into the unit group of $\kappa[G]/I^n$, i.e., $D_n\cong1+I^n$,
where $\kappa$ is any field of characteristic $p$, and $I$ is the augmentation ideal of $\kappa[G]$ \cite[\S 11.1]{analytic}.
The following formula, due to Lazard, provides an explicit description for $D_n$ (see \cite[Theorem 11.2]{analytic}):
\begin{equation}\label{lazard}
 D_n(G)=\prod_{ip^h\geq n}\gamma_i(G)^{p^h}.
\end{equation}

It is worth questioning how the dimension subgroups look when $G$ is the maximal pro-$p$ Galois group $G_F(p)$
of a $p$-rigid field $F$. This can be addressed using the following theorem.

\begin{thm}[\cite{analytic}, Thm. 11.4]
Let $G$ be a finitely generated pro-$p$ group.
Then $G$ has finite rank if and only if $D_n(G)=D_{n+1}(G)$ for some $n$.
\end{thm}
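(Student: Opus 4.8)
The plan is to translate the statement into the language of the associated graded restricted Lie algebra and then feed it the two tools already available in the excerpt: Lazard's formula~(\ref{lazard}) for the $D_n$, and the equivalence of finite rank with $p$-adic analyticity. First I would set $\mathfrak{g}=\bigoplus_{n\ge 1}D_n(G)/D_{n+1}(G)$. Because $D_n\cong 1+I^n$ inside $\F_p[[G]]$, passing to the associated graded of the augmentation filtration identifies $\bigoplus_n I^n/I^{n+1}$ with the restricted universal enveloping algebra of $\mathfrak{g}$; both are graded $\F_p$-algebras generated in degree $1$ by the $d$-dimensional space $G/\Phi(G)$, where $d=d(G)$. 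The key reformulation is the tautology $D_n=D_{n+1}\iff\mathfrak{g}_n=0$, so the theorem becomes: \emph{$G$ has finite rank if and only if some graded component of $\mathfrak{g}$ vanishes.}

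For the direction ``finite rank $\Rightarrow$ some $D_n=D_{n+1}$'', I would invoke the cited characterization \cite[Ch.~8]{analytic} to obtain an open normal uniform subgroup $N\le G$ of dimension $e$. For a powerful (hence uniform) group one has $\gamma_i(N)\subseteq N^{p^{i-1}}$ by an easy induction from $[N^{p^a},N]\subseteq N^{p^{a+1}}$. Substituting this into Lazard's formula (\ref{lazard}) and using $p^{i-1}\ge i$ collapses every term of $D_n(N)=\prod_{ip^h\ge n}\gamma_i(N)^{p^h}$ into the single dominant factor, giving $D_n(N)=N^{p^{\lceil\log_p n\rceil}}$. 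Since $N$ is uniform, the chain $N\supsetneq N^p\supsetneq N^{p^2}\supsetneq\cdots$ is strict with index $p^{e}$ at each step, so $D_n(N)$ is \emph{constant} as $n$ runs over each block $(p^{h-1},p^{h}]$; any such block of length $\ge2$ (present for every $p$ and $h\ge2$) produces consecutive equal terms $D_n(N)=D_{n+1}(N)$. It then remains to transfer a repeat from $N$ to $G$: for $n$ large, $D_n(G)\le N$, and since finite rank forces $\gamma_i(G)\subseteq N^{p^{c(i)}}$ with $c(i)\to\infty$, Lazard's formula for $G$ is eventually governed by the same powers of $N$, so $D_\bullet(G)$ is likewise constant over a long block and inherits a repeat.

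For the converse ``some $D_n=D_{n+1}$ $\Rightarrow$ finite rank'', I would argue by contraposition through the growth of $\mathfrak{g}$. If $G$ has infinite rank then $\sup_{C\le G}d(C)=\infty$ over open subgroups $C$, which by the relation between open-subgroup generation and the augmentation filtration forces the enveloping algebra $\bigoplus_n I^n/I^{n+1}$ to have exponential growth, i.e.\ infinite Gelfand--Kirillov dimension; equivalently $\mathfrak{g}$ contains a free restricted Lie subalgebra on two degree-$1$ generators. Such a subalgebra has a nonzero component in every degree, whence $\mathfrak{g}_m\ne0$, i.e.\ $D_m\ne D_{m+1}$, for all $m$. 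Conversely, the vanishing of a single $\mathfrak{g}_n$ precludes this free subalgebra, bounds $\dim_{\F_p}\mathfrak{g}_m=d\big(D_m/D_{m+1}\big)$ uniformly in $m$, and hence bounds $d(C)$ over open $C$ via the $D_\bullet$-filtration, yielding $\rank(G)<\infty$.

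The hard part is precisely this last direction: extracting, from the vanishing of one graded piece of a restricted Lie algebra generated in degree $1$, the dichotomy ``polynomial versus exponential growth'' and ruling out a free subalgebra on two generators. This is the genuine Lie-algebra combinatorics underlying \cite[Thm.~11.4]{analytic}; once it is in place, the remaining steps -- the powerful-group estimate in the forward direction, the finite-index transfer, and the translation of bounded graded dimensions into bounded subgroup rank -- are routine consequences of Lazard's formula and the material in \cite{analytic}.
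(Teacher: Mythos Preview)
The paper does not prove this statement. It is quoted verbatim as \cite[Thm.~11.4]{analytic} and used as an external black box; no argument for it appears anywhere in the paper. There is therefore nothing in the paper's own text against which your proposal can be compared.

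That said, a brief comment on your sketch may still be useful. Your forward direction is essentially sound: passing to an open uniform $N$, the computation $D_n(N)=N^{p^{\lceil\log_p n\rceil}}$ is exactly the one carried out in the paper for the special case $G=G_F(p)$ with $F$ $p$-rigid (the paragraph following the cited theorem), and the existence of a repeat on a block $(p^{h-1},p^h]$ with $h\ge 2$ is immediate. The transfer from $N$ to $G$ is a little loose as written; the clean way is to observe that a repeat in $D_\bullet(N)$ forces the graded restricted Lie algebra of $N$ to be finite-dimensional over $\F_p$, and finite index then gives the same for $G$, whence $D_n(G)=D_{n+1}(G)$ for all large $n$.

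Your converse direction, however, has a genuine gap. From $D_n=D_{n+1}$ one gets $\mathfrak{g}_n=0$, but your route through ``infinite rank $\Rightarrow$ exponential growth of $\bigoplus I^m/I^{m+1}$ $\Rightarrow$ a free restricted Lie subalgebra on two degree-$1$ generators'' is not available at this level of generality: infinite-rank pro-$p$ groups need not contain nonabelian free pro-$p$ subgroups, and correspondingly their graded Lie algebras need not contain free restricted Lie subalgebras. The actual argument in \cite{analytic} is combinatorial on the restricted Lie algebra: from $\mathfrak{g}_n=0$ one deduces, using only that $\mathfrak{g}$ is generated in degree $1$ with $[\mathfrak{g}_i,\mathfrak{g}_j]\subseteq\mathfrak{g}_{i+j}$ and $\mathfrak{g}_i^{[p]}\subseteq\mathfrak{g}_{pi}$, a uniform bound on $\dim_{\F_p}\mathfrak{g}_m$ for all $m$, and from this one reads off a bound on $d(D_m(G))$ and hence on $\rank(G)$. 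Your final paragraph acknowledges that this step is ``the hard part''; as written, though, the mechanism you propose for it is not the right one.
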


Thus, by Corollary~\ref{corF}, the above theorem also holds for  finitely generated $G_F(p)$ with $F$ a $p$-rigid field.
However, in our calculation below we can assume that $G_F(p)$ is any Galois group of the maximal $p$-extension of a $p$-rigid
field which is not necessarily finitely generated.
Let $G=G_F(p)$ for such a field and $k\in\N\cup\{\infty\}$ and $\theta_F$ be as above,
and set $N=\kernel(\theta_F)$.
Thus $N\cong\Z_p^{\mathcal{I}}$ as pro-$p$ subgroups
-- in particular, if $k=\infty$ then $N=G$.
Then by (\ref{eq:presentationwithcyclotomic}) one has
\[[G,G]=N^{p^k}\quad\text{and}\quad\gamma_i(G)=N^{p^{k(i-1)}},\]
for $i>1$. (We implicitly set $p^\infty=0$. Thus in the case $k = \infty$, all commutators are trivial and the calculation below became very simple but still giving us the same result formally as below independent of whether $k$ is finite or infinite.)
Assume $p^{\ell-1}<n\leq p^\ell$, with $\ell\geq1$, so that $\ell=\lceil\log_p(n)\rceil$,
i.e., $\ell$ is the least integer such that $\ell\geq\log_p(n)$.
Hence, by Lazard's formula (\ref{lazard}), one has
\begin{equation}\label{eq:computdimension}
D_n(G)=\gamma_1(G)^{p^\ell}\prod_{ip^h\geq n}\gamma_i(G)^{p^h}=G^{p^\ell}\prod_{ip^h\geq n} N^{p^{k(i-1)+h}},
\end{equation}
where $i\geq2$.

We shall show that for every $i,h$ such that $i\geq2$ and $ip^h\geq n$, one has the inequality
\begin{equation}\label{inequality1}
k(i-1)+h\geq\ell,
\end{equation}
so that $N^{p^{k(i-1)+h}}\leq G^{p^\ell}$ and $D_n(G)=G^{p^\ell}$.
If $h\geq\ell$, then (\ref{inequality1}) follows immediately.
Otherwise, notice that $i>p^{\ell-h-1}\geq1$, as $ip^h\geq n>p^{\ell-1}$, which implies
\begin{equation}\label{inequality2}
 k(i-1)>k\left(p^{(\ell-h)-1}-1\right).
\end{equation}
Therefore, for $\ell-h\geq2$, the inequality (\ref{inequality2}) implies $k(i-1)\geq\ell-h$,
and thus (\ref{inequality1}),
whereas for $\ell-h=1$ (\ref{inequality1}) follows from the fact that $i\geq2$.

Altogether, this shows that
\begin{eqnarray*}
&& D_n(G)=G^{p^\ell},\quad\text{with } p^{\ell -1} < n \le p^{\ell},   \\
&\text{and}& \frac{D_n(G)}{D_{n+1}(G)}\cong\left\{\begin{array}{cc} (\Z/p\Z)^{\mathcal{I}} & \text{for $n$ a $p$-power}\\
1 & \text{otherwise.} \end{array}\right.
\end{eqnarray*}

\section*{Acknowledgements}
We thank A. Adem, D. Karagueuzian, and I. Efrat
for working with the second author over the years on related topics which strongly influenced the way
we think about rigidity and its unique place in Galois theory.
Moreover, we thank E. Aljadeff, D. Neftin, J. Sonn
and N.D. Tan  for their interest and helpful suggestions on this paper.
Also the second author is grateful to J. Koenigsmann for calling his attention to Koenigsmann's paper
with A. Engler quoted and used in our paper. We are  grateful to Leslie Hallock for her passion for a proper and elegant use of English language and for her kind but forceful
advice related to its use in our paper.
We are also grateful to the referee for his/her encouragement and
for providing us a number of valuable suggestions to improve the exposition.
Last but not least, the second and third authors are very grateful to Th.S. Weigel who introduced them to each other.


\begin{thebibliography}{argomento}
\bibitem[AGKM01]{AdGaKaMi} A. Adem, W. Gao, D. Karaguezian, J. Min\'a\v{c}, Field theory and the cohomology of some Galois groups.
    {\it J. Algebra}, 235 (2001), 608-635.
\bibitem[AEJ87]{AEJ} Arason, R. Elman, B. Jacob. Rigid elements, valuations, and realizations of Witt rings.
    {\it J. Algebra} 110 (1987), no. 2, 449-467.
\bibitem[AT09]{artintate} E. Artin and J. Tate. Class field theory, AMS Chelsea Publication, 2009.
\bibitem[Be74]{beckerAS} E. Becker. Euklidische K\"{o}rper und euklidische H\"ullen von K\"orpern.
    Collection of articles dedicated to Helmut Hasse on his seventy-fifth birthday, II. J. Reine Angew. Math. 268/269 (1974), 41-52.
\bibitem[Ca1545]{arsmagna} G. Cardano. {\it Artis magnae, sive de regulis algebraicis liber unus}.
    Ticinum Pavia, MI, 1545.
\bibitem[CEM12]{cem} S.K. Chebolu, I. Efrat, J. Min\'a\v{c}. Quotients of absolute Galois groups which determine the entire Galois cohomology.
    {\it Math. Ann.} 352 (2012), no. 1, 205-221.
\bibitem[dSMS00]{horizons} M.P.F. du Sautoy, A. Mann, A. Shalev (eds.). {\it New horizons in pro-$p$ groups}.
    Progr. Math., 184, Birkh\"{a}user Boston, Boston, MA, 2000.
\bibitem[DdSMS03]{analytic} J.D. Dixon, M.P.F. du Sautoy, A. Mann, D. Segal. \textit{Analytic Pro-$p$ Groups, 2nd edition}.
    Cambridge studies in advanced mathematics 61, Cambridge University Press, Cambridge, second edition, First paperback edition 2003.
\bibitem[Ef95]{ido0} I. Efrat. Abelian subgroups of pro-2 Galois groups.
    {\it Proc. Amer. Math. Soc.} 123 (1995), no. 4, 1031-1035.
\bibitem[Ef98]{small} I. Efrat. Small maximal pro-$p$ Galois groups.
    {\it Manuscripta Math.} 95 (1998), 237-249.
\bibitem[Ef99]{ido1} I. Efrat. Construction of valuations from $K$-theory.
    {\it Math. Res. Lett.} 6 (1999), no. 3-4, 335-343.
\bibitem[Ef06]{Ktheoryefrat} I. Efrat. \textit{Valuations, orderings, and Milnor $K$-theory}.
    Mathematical surveys and monographs 124, American Mathematical Society, Providence, RI, 2006.
\bibitem[EM11]{idojan1} I. Efrat, J. Min\'{a}\v{c}. On the descending central sequence of absolute Galois groups.
    {\it Amer. J. Math.}, 133 (2011), n. 6: 1503-1532.
\bibitem[EM12]{idojan2} I. Efrat, J. Min\'{a}\v{c}. Small Galois groups that encode valuations.
    {\it Acta Arith.} 156 (2012), no. 1, 7-17.
\bibitem[EM13]{idojan3} I. Efrat, J. Min\'{a}\v{c}. Galois groups and cohomological functors.
    Preprint, available at {\tt arXiv:1103.1508}.
\bibitem[EL72]{elmanlam} R. Elman, T.Y. Lam. Quadratic forms over formally real fields and pythagorean fields.
    {\it Amer. J. Math.} 94 (1972), 1155-1194.
\bibitem[EK98]{englerkoen} A.J. Engler, J. Koenigsmann, Abelian subgroups of pro-$p$ Galois groups.
    {\it Trans. Amer. Math. Soc.} 350 (1998), n. 6, 2473-2485.
\bibitem[GLMS03]{GLMS} W. Gao, D.B. Leep, J. Min\'a\v{c}, T.L. Smith, Galois groups over nonrigid fields.
    {\it Valuation theory and its applications}, Vol. II (Saskatoon, SK, 1999), 61-77, Fields Inst. Commun., 33, Amer. Math. Soc., Providence, RI, 2003.
\bibitem[GS06]{GilleSzamuely} P. Gille, T. Szamuely. {\it Central simple algebras and Galois cohomology}.
    Cambridge Studies in Advanced Mathematics, 101. Cambridge University Press, Cambridge, 2006.
\bibitem[HJ95]{hwangjacob} Y.S. Hwang, B. Jacob. Brauer group analogues of results relating the Witt ring to valuations and Galois theory.
    {\it Can. J. Math.} 43 (1995), no. 3, 527-543.
\bibitem[Ja81]{jacob} B. Jacob, On the structure of Pythagorean fields.
    {\it J. Algebra} 68 (1981), no. 2, 247-267.
\bibitem[JW89]{jacobeare} B. Jacob, R. Ware. A recursive description of the maximal pro-2 Galois group via Witt rings.
    {\it Math. Z.} 200 (1989), no. 3, 379-396.
\bibitem[KS11]{klosno} B. Klopsch, I. Snopce. Pro-$p$ groups with constant generating number of open subgroups.
    {\it J. Algebra} 331 (2011), 263-270.
\bibitem[Ko95]{koenig1} J. Koenigsmann. From p-rigid elements to valuations (with a Galois-characterization of p-adic fields).
    {\it J. Reine Angew. Math.} 465 (1995), 165-182.
\bibitem[Ko01]{koenig3} J. Koenigsmann. Solvable absolute Galois groups are metabelian.
    {\it Invent. Math.} 144 (2001), no. 1, 1-22.
\bibitem[Ko03]{koenig2} J. Koenigsmann. Encoding valuations in absolute Galois groups.
    {\it Valuation theory and its applications}, Vol. II (Saskatoon, SK, 1999), 107-132, Fields Inst. Commun., 33, Amer. Math. Soc., Providence, RI, 2003.
\bibitem[Lan02]{lang} S. Lang, Algebra, Springer, Revised 3rd edition, 2002.
\bibitem[La65]{lazard} M. Lazard. Groupes analytiques $p$-adiques.
    {\it Publ. Math.} Inst. Hautes \'Etudes Sci. 26 (1965), 389-603.
\bibitem[LS02]{leepsmith} D.B. Leep, T.L. Smith, Multiquadratic extensions, rigid fields and pythagorean fields.
    {\it Bull. London Math. Soc.} 34 (2002), 140-148.
\bibitem[MSp96]{MinacSpira} J. Min\'a\v{c}, M. Spira.  Witt rings and Galois groups.
    {\it Ann. of Math.} (2) 144 (1996), no. 1, 35-60.
\bibitem[MS03]{janswallow} J. Min\'a\v{c}, J. Swallow. Galois module structure of $p$th power classes of extensions of degree $p$.
    {\it Israel J. Math.} 138 (2003), 29-42.
\bibitem[MST]{secret} J. Min\'a\v{c}, J. Swallow, A. Topaz. Galois module structure of $(\ell^n)$th classes of fields.
    Preprint, available at {\tt arXiv:1204.6611}, to appear in \emph{Bull. London Math. Soc.}
\bibitem[NSW]{nsw} J. Neukirch, A. Schmidt, K. Wingberg. \textit{Cohomology of Number Fields}.
    Grundlehren der mathematischen Wissenschaften 323, Springer, Berlin Heidelberg, 2000.
\bibitem[Qu13]{claudio} C. Quadrelli, Bloch-Kato pro-$p$ groups and locally powerful groups.
    {\it Forum Math.}, to appear.
\bibitem[Se65]{serre65}  J. P. Serre Sur la dimension cohomologique des groupes profinis. (French) Topology 3, 1965 413-420.
\bibitem[Se79]{SerreLF} J.P. Serre. {\it Local Fields},
    Graduate Texts in Mathematics, 67. Springer-Verlag, New York-Berlin, 1979.
\bibitem[SW00]{symondsthomas} P. Symonds, Th. Weigel. Cohomology of $p$-adic analytic groups.
    in {\it New horizons in pro-$p$ groups}, Progr. Math., 184, Birkh\"{a}user Boston, Boston, MA, (2000), 349-410.
\bibitem[Sz77]{szy} K. Szymiczek. Quadratic forms over fields.
    {\it Dissertationes Math. (Rozprawy Mat.)} 152 (1977).
\bibitem[Vo11]{voev} V. Voevodsky. On motivic cohomology with $\Z/l$-coefficients.
    {\t Ann. of Math. (2)} 174 (2011), no. 1, 401-438.
\bibitem[Wr78]{ware1} R. Ware, When Witt rings are group rings? II.
    {\it Pacific J. Math.} 76 (1978), no. 2, 541-564.
\bibitem[Wr81]{ware2} R. Ware, Valuation rings and rigid elements in fields.
    {\it Canad. J. Math.} 33 (1981), no. 6, 1338-1355.
\bibitem[Wr92]{ware} R. Ware, Galois groups of maximal $p$-extensions.
    {\it Trans. Amer. Math. Soc.} 33 (1992), no. 2, 721-728.
\bibitem[We08]{weibel} C. Weibel. The proof of the Bloch-Kato Conjecture.
    ICTP Lecture Notes Series 23 (2008), 1-28.
\bibitem[We09]{weibel2} C. Weibel. The norm residue isomorphism theorem.
    {\it J. Topol.} 2 (2009), no. 2, 346-372.
\end{thebibliography}
\end{document}